\numberwithin{equation}{section}
\newtheorem{Thm}[equation]{Theorem}
\newtheorem{Prop}[equation]{Proposition}
\newtheorem{Lem}[equation]{Lemma}
\newtheorem{Cor}[equation]{Corollary}
\theoremstyle{remark}
\newtheorem{Rem}[equation]{Remark}
\newtheorem*{Rem*}{Remark}
\newtheorem{Def}[equation]{Definition}
\newtheorem{Ter}[equation]{Terminology}
\newtheorem{Rec}[equation]{Recollection}
\newtheorem{Not}[equation]{Notation}
\newtheorem{Exa}[equation]{Example}
\newtheorem{Cons}[equation]{Construction}
\theoremstyle{definition}
\newtheorem*{Ack*}{Acknowledgements}
\newcommand{\nc}{\newcommand}
\nc{\dmo}{\DeclareMathOperator}
\dmo{\Ab}{Ab}
\dmo{\AbMon}{AbMon}
\dmo{\Aut}{Aut}
\dmo{\bicMack}{\biMack_{\mathsf{ic}}} 
\dmo{\biMack}{\mathsf{Mack}} 
\dmo{\boxprod}{\Box}
\dmo{\Ch}{Ch}
\dmo{\CoInd}{CoInd}
\dmo{\Der}{D}
\dmo{\End}{End}
\dmo{\Free}{Free} 
\dmo{\Fun}{\mathrm{Fun}} 
\dmo{\Hom}{Hom}
\dmo{\Ho}{Ho}
\dmo{\img}{im}
\dmo{\Img}{Im}
\dmo{\incl}{incl}
\dmo{\Ind}{Ind}
\dmo{\Infl}{Inf}
\dmo{\Defl}{Def}
\dmo{\Iso}{Iso}
\dmo{\Inj}{Inj} 
\dmo{\Ker}{Ker}
\dmo{\Lan}{Lan} 
\dmo{\Ran}{Ran} 
\dmo{\Mackey}{Mack} 
\dmo{\coMackey}{coMack}
\dmo{\Map}{Map}%
\dmo{\Mod}{Mod}
\dmo{\Mor}{Mor}%
\dmo{\Obj}{Obj}
\dmo{\Perm}{Perm}
\dmo{\Proj}{Proj} 
\dmo{\pr}{pr}
\dmo{\PsFunJJ}{\PsFun_{\JJ_!}^{\JJ^\prime\textrm{\!-}\mathsf{oplax}}}
\dmo{\PsFunJop}{\PsFun_{{{\JJ}_{{}_{*}}}}}
\dmo{\PsFunJ}{\PsFun_{\JJ_!}}
\dmo{\PsFunoplax}{\PsFun^{\mathsf{oplax}}}
\dmo{\PsFun}{\mathsf{PsFun}} 
\dmo{\Res}{Res}
\dmo{\Rep}{Rep}
\dmo{\SH}{SH}
\dmo{\Spanname}{\mathsf{Span}}
\dmo{\Stab}{Stab}
\dmo{\twoFun}{2\mathsf{Fun}}
\dmo{\dom}{dom} 
\dmo{\cod}{cod} 
\dmo{\Yo}{Yo}
\nc\noloc{\nobreak\mspace{6mu plus 1mu}{:}\nonscript\mkern-\thinmuskip\mathpunct{}\mspace{2mu}}
\nc{\ababs}{{\sl ab absurdo}}
\nc{\Add}{\mathsf{Add}}
\nc{\ADD}{\mathsf{ADD}}
\nc{\adhoc}{{\sl ad hoc}}
\nc{\adjto}{\rightleftarrows}
\nc{\adj}{\dashv\,}
\nc{\afortiori}{{\sl a fortiori}}
\nc{\aka}{{a.\,k.\,a.}\ }
\nc{\all}{\mathsf{all}}
\nc{\apriori}{{\sl a priori}}
\nc{\ass}{\mathrm{ass}} 
\nc{\bbA}{\mathbb{A}}
\nc{\bbB}{\mathbb{B}}
\nc{\bbC}{\mathbb{C}}
\nc{\bbD}{\mathbb{D}}
\nc{\bbF}{\mathbb{F}}
\nc{\bbI}{\mathbb{I}}
\nc{\bbM}{\mathbb{M}}
\nc{\bbN}{\mathbb{N}}
\nc{\bbP}{\mathbb{P}}
\nc{\bbQ}{\mathbb{Q}}
\nc{\bbR}{\mathbb{R}}
\nc{\bbZ}{\mathbb{Z}}
\nc{\bs}{\backslash}
\nc{\BurnG}{\cat{A}(G)}
\nc{\cat}[1]{\mathcal{#1}}
\nc{\Cat}{\mathsf{Cat}}
\nc{\CAT}{\mathsf{CAT}}
\nc{\cf}{{\sl cf.}\ }
\nc{\Cf}{{\sl Cf.}\ }
\nc{\colim}{\mathop{\mathrm{colim}}}
\nc{\costar}{**}
\nc{\co}{{\mathrm{co}}}
\nc{\DD}{\cat{D}}
\nc{\Displ}{\displaystyle}
\nc{\doublequot}[3]{#1\backslash #2/#3}
\nc{\Ecell}{\rotatebox[origin=c]{90}{$\Downarrow$}} 
\nc{\eg}{{\sl e.g.}\ } 
\nc{\Eg}{{\sl E.g.}\ } 
\nc{\eps}{\varepsilon}
\nc{\equalby}[1]{\overset{\textrm{#1}}{=}}
\nc{\exact}{\mathsf{ex}}
\nc{\faithful}{\mathsf{faithful}}
\nc{\faith}{\mathsf{faithf}}
\nc{\final}{\textrm{\scriptsize{\ding{93}}}} 
\nc{\Funadd}{\Fun_{\amalg}}
\nc{\Funplus}{\Fun_{+}}
\nc{\fun}{\mathrm{fun}} 
\nc{\FPk}{\mathrm{FP}_\Bbbk} 
\nc{\GG}{\mathbb{G}}
\nc{\gpdG}{{\groupoidf_{\!\smallslash\!G}}} 
\nc{\gpdGfuz}{{\groupoid{}^{\smallfaithful,\mathsf{fus}}_{\!\smallslash\!G}}}
\nc{\ssetfuz}{\sset^{\smallfused}} 
\nc{\gpd}{\groupoid}
\nc{\GinG}{{\groupoidf_{G}}}
\nc{\gps}{\mathsf{groups}} 
\nc{\groconn}{\groupoid_{\mathsf{conn}}}
\nc{\groupoidf}{\groupoid{}^{\smallfaithful}}
\nc{\groupoid}{\mathsf{gpd}}
\nc{\group}{\mathsf{grp}} 
\nc{\Gsets}{G\mathsf{-sets}}
\nc{\HGfK}{\doublequot{H}{G}{f(K)}}
\nc{\HGK}{\doublequot HGK}
\nc{\Homcat}[1]{\Hom_{\cat #1}}
\nc{\hooklongleftarrow}{\longleftarrow\joinrel\rhook}
\nc{\hooklongrightarrow}{\lhook\joinrel\longrightarrow}
\nc{\hook}{\hookrightarrow}
\nc{\Hsets}{H\mathsf{-sets}}
\nc{\ICAdd}{\Add_{\mathsf{ic}}}
\nc{\ICADD}{\ADD_{\mathsf{ic}}}
\nc{\Idcat}[1]{\Id_{\cat{#1}}}
\nc{\id}{\mathrm{id}}
\nc{\Id}{\mathrm{Id}}
\nc{\ie}{{\sl i.e.}\ }
\nc{\into}{\mathop{\rightarrowtail}}
\nc{\inv}{^{-1}}
\nc{\Iout}[1]{\Ivo{\sout{#1}}}
\nc{\isocell}[1]{\undersett{ #1}{\overset{\sim}{\Ecell}}} 
\nc{\Isocell}[1]{\undersett{ #1}{\overset{\sim}{\Longrightarrow}}}
\nc{\isoEcell}{\overset{\sim}{\Rightarrow}} 
\nc{\Ivocell}[1]{\overset{\scriptstyle{#1}}{\Ecell}}  
\nc{\isotoo}{\stackrel{\sim}\longrightarrow}
\nc{\isoto}{\buildrel \sim\over\to}
\nc{\Ivo}[1]{{\color{OliveGreen}#1}} 
\nc{\James}[1]{{\color{Red}#1}} 
\nc{\JJ}{\mathbb{J}}
\nc{\kk}{\Bbbk}
\nc{\KK}{\mathrm{KK}}
\nc{\leps}{{}^{\ell}\eps}
\nc{\leta}{{}^{\ell}\eta}
\nc{\loccit}{{\sl loc.\ cit.}}
\nc{\lotoo}[1]{\overset{#1}{\,\longleftarrow\,}}
\nc{\loto}[1]{\overset{#1}{\leftarrow}}
\nc{\lto}{\leftarrow}
\nc{\lun}{\mathrm{lun}} 
\nc{\Mackintro}[1]{(Mack\,\ref{Mack-#1-intro})}
\nc{\Mack}[1]{(Mack\,\ref{Mack-#1})}
\nc{\Mid}{\,\big|\,}
\nc{\MMod}{\,\text{-}\!\Mod}%
\nc{\MM}{\cat{M}}
\nc{\Muniv}{\cat{M}_{\mathsf{univ}}}
\nc{\Ncell}{\rotatebox[origin=c]{0}{$\Uparrow$}} 
\nc{\NEcell}{\rotatebox[origin=c]{135}{$\Downarrow$}} 
\nc{\NN}{\cat{N}}
\nc{\NWcell}{\rotatebox[origin=c]{-135}{$\Downarrow$}} 
\nc{\oEcell}[1]{\overset{\scriptstyle #1}{\Ecell}} 
\nc{\oWcell}[1]{\overset{\scriptstyle #1}{\Wcell}} 
\nc{\ointo}[1]{\overset{#1}{\rightarrowtail}}
\nc{\olto}[1]{\overset{#1}\lto}
\nc{\onto}{\mathop{\twoheadrightarrow}}
\nc{\op}{{\mathrm{op}}}
\nc{\otoo}[1]{\overset{#1}{\,\longrightarrow\,}}
\nc{\oto}[1]{\overset{#1}\to}
\nc{\Paul}[1]{{\color{Blue}#1}}
\nc{\pih}[1]{\tau_{1}#1}
\nc{\Pout}[1]{\Paul{\sout{#1}}}
\nc{\PsFunJindex}{\PsFun_{{\JJ_!}} \ \ {{\JJ}_{!}}\textrm{-strong pseudo-functors}}
\nc{\qquadtext}[1]{\qquad\textrm{#1}\qquad}
\nc{\quadtext}[1]{\quad\textrm{#1}\quad}
\nc{\ra}{\rightarrow}
\nc{\reps}{{}^{r\!}\eps}
\nc{\restr}[1]{{|_{\scriptstyle #1}}}
\nc{\reta}{{}^{r\!}\eta}
\nc{\run}{\mathrm{run}} 
\nc{\Sad}{\mathsf{Sad}}
\nc{\SAD}{\mathsf{SAD}}
\nc{\sbull}{{\scriptscriptstyle\bullet}}
\nc{\Scell}{\rotatebox[origin=c]{0}{$\Downarrow$}} 
\nc{\SEcell}{\rotatebox[origin=c]{45}{$\Downarrow$}} 
\nc{\SET}[2]{\big\{\,#1\Mid#2\,\big\}}
\nc{\set}{\mathrm{set}} 
\nc{\Set}{\mathrm{Set}}
\nc{\smallfaithful}{\mathsf{f}}
\nc{\smallfused}{\mathsf{fus}}
\nc{\ssetfused}{\textrm{-}\underline{\set}} 
\nc{\smallslash}{{}^{\scriptscriptstyle/}}
\nc{\smat}[1]{\left(\begin{smallmatrix} #1 \end{smallmatrix}\right)}
\nc{\spanG}{{\widehat{\mathsf{gp}\,\,}\!\!\mathsf{d}}{}^\smallfaithful_{\!{}^{\scriptscriptstyle/}\!G}}
\nc{\Spanhat}{\textrm{\sf S}\widehat{\textrm{\sf pan}}} %
\nc{\Span}{\mathsf{Span}}
\nc{\spancat}{\mathrm{Sp}} 
\nc{\spank}{\spancat_{\kk}} 
\nc{\biset}{\mathsf{biset}}
\nc{\Biset}{\mathsf{Biset}}
\nc{\bisetcat}{\mathrm{Bis}} 
\nc{\bisk}{\bisetcat_\kk} 
\nc{\bifree}{\mathsf{bif}}
\nc{\rfree}{\mathsf{rf}}
\nc{\lfree}{\mathsf{lf}}
\nc{\sset}{\textrm{-}\set}
\nc{\str}{\mathsf{str}}
\nc{\SWcell}{\rotatebox[origin=c]{-45}{$\Downarrow$}} 
\nc{\too}{\mathop{\longrightarrow}\limits} 
\nc{\tSpan}{\pih{\Spanname}}
\nc{\undersett}[1]{\underset{\scriptstyle #1}}
\nc{\un}{\mathrm{un}} 
\nc{\unit}{\mathbb{1}}
\nc{\vcorrect}[1]{{\vphantom{\vbox to #1em{}}}}
\nc{\Wcell}{\rotatebox[origin=c]{90}{$\Uparrow$}} 
\nc{\what}[1]{\widehat{\cat{#1}}}
\nc{\xra}{\xrightarrow}
\begin{document}


\title{On the comparison of spans and bisets}
\author{Ivo Dell'Ambrogio}
\author{James Huglo}
\date{\today}

\address{
\noindent Univ.\ Lille, CNRS, UMR 8524 - Laboratoire Paul Painlev\'e, F-59000 Lille, France}
\email{ivo.dell-ambrogio@univ-lille.fr}
\email{james.huglo@univ-lille.fr}
\urladdr{http://math.univ-lille1.fr/$\sim$dellambr}

\begin{abstract} 
We compare the bicategory of spans with that of bisets (a.k.a.\  bimodules, distributors, profunctors) in the context of finite groupoids. We construct in particular a well-behaved pseudo-functor from spans to bisets. 
This yields an application to the axiomatic representation theory of finite groups, namely a new proof and a strengthening of Ganter and Nakaoka's identification of Bouc's category of biset functors as a reflective subcategory of global Mackey functors. To this end, we also prove a tensor-monadicity result for linear functor categories.  
\end{abstract}

\thanks{Authors partially supported by Project ANR ChroK (ANR-16-CE40-0003) and Labex CEMPI (ANR-11-LABX-0007-01).}

\subjclass[2010]{18A25, 18B40, 18C15, 18D05, 18D10, 20J05, 16B50}
\keywords{Groupoid, span, Mackey functor, biset functor, tensor monadicity.}

\maketitle


\tableofcontents

\section{Introduction and results}
\label{sec:intro}%

The motivation behind this paper is to provide a new proof of a result of Nakaoka \cite{Nakaoka16}~\cite{Nakaoka16a} identifying the tensor category of biset functors as a full tensor ideal subcategory of global Mackey functors (see \Cref{Cor:bisetfun} below, where we state the result in question after some recollections). 
In our approach, Nakaoka's theorem arises as a formal consequence of a `higher' result of independent interest, comparing two bicategories whose objects are, in both cases,  finite groupoids: 
\begin{enumerate} [\rm(1)]
\item The \emph{bicategory of spans}, denoted~$\Span$. In this bicategory, a 1-morphism $H\to G$ between two groupoids $H,G$ is a span of functors $H \gets S \to G$, and a 2-morphism is an equivalence class of diagrams
\[
\xymatrix@R=8pt@C=10pt{
&& S \ar[dd] \ar[dll] \ar@{}[ddl]|{\simeq \;\;} \ar@{}[ddr]|{\;\;\simeq} \ar[drr] && \\
H &&  && G \\
&& S' \ar[ull] \ar[urr] &&
}
\]
of functors and natural isomorphisms. Horizontal composition is computed by forming iso-comma squares. See details in \Cref{Cons:Span}.
\item The \emph{bicategory of bisets}, denoted~$\Biset$. Here a 1-morphism $H\to G$ is a finite $G,H$-biset (a.k.a.\ distributor, profunctor, bimodule, relator), \ie a finite-sets-valued functor $H^\op\times G\to \set$, and 2-morphisms are simply natural transformations of such functors. Horizontal composition is computed by taking tensor products (coends) of functors. See details in \Cref{Cons:Biset}.
\end{enumerate}

This is our comparison result, whose proof can be found in~\Cref{sec:R}:

\begin{Thm} [{Comparison of spans and bisets}]
\label{Thm:main1-intro}
There exists a pseudo-functor $\mathcal R \colon \Span \to \Biset$ we call \emph{realization}, which is the identity on objects (\ie finite groupoids) and `realizes' a span $H \stackrel{b}{\leftarrow} S \stackrel{a}{\to} G$ from $H$ to $G$ as  the coend
\[
\cat R (b,a) := G(a-,-) \otimes_S H(-,b-) := \int^{s\in S} \underbrace{G(as , -)}_{\mathcal R_!(a)} \times \underbrace{H( - , bs)}_{\mathcal R^*(b)} \,.
\]
Moreover, every biset is (canonically) isomorphic to the realization of a span.
\end{Thm}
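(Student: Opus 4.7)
The strategy is to define $\mathcal R$ directly via the coend formula and to verify the pseudo-functor structure by means of two elementary principles: co-Yoneda (together with Fubini for coends), and what I shall refer to as the \emph{Grothendieck-construction-as-coend identity}. The latter asserts that for any cospan $S\xrightarrow{a} G \xleftarrow{b'} T$ of finite groupoids and any $F\colon S\times T\to\set$,
\[
\int^{(s,t,\alpha)\in S\times_G T} F(s,t) \;\cong\; \int^{(s,t)\in S\times T} G(as,b't)\times F(s,t),
\]
because, $G$ being a groupoid, the projection $S\times_G T\to S\times T$ is a discrete fibration with fiber $G(as,b't)$ over $(s,t)$, so a coend on the source factors as an outer coend over the base tensored with the fiber.

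First I would define $\mathcal R$ on 1-morphisms by the given formula; finiteness of $H,G,S$ ensures the result is finite-valued. On 2-morphisms, a representative map of spans $f\colon S\to S'$ with natural isomorphisms $b'f\simeq b$ and $a'f\simeq a$ induces a natural transformation between coends by functoriality, and one checks that the prescribed equivalence relation on diagrams is respected. For the pseudo-functor structure, the unitor $\mathcal R(\id_G,\id_G)\cong G(-,-)$ is immediate from co-Yoneda. For the compositor, given composable spans $H\leftarrow S\to G$ and $G\leftarrow T\to K$, a routine Fubini together with one application of co-Yoneda on the gluing object computes
\[
\mathcal R(b',a')\otimes_G \mathcal R(b,a) \;\cong\; \int^{s\in S,\, t\in T} G(as,b't)\times H(-,bs)\times K(a't,-),
\]
while the Grothendieck-construction-as-coend identity identifies $\mathcal R$ of the iso-comma composite span $H\leftarrow S\times_G T\to K$ with precisely the same double coend. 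The coherence axioms of a pseudo-functor then reduce to the coherence of iso-comma pullbacks in $\groupoid$ combined with that of the co-Yoneda calculus.

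For the last assertion, given a finite biset $M\colon H^\op\times G\to\set$, I would take $S:=\int M$ to be its groupoid of elements, a finite groupoid since $H,G$ are finite and $M$ is finite-valued, with canonical projections $b\colon S\to H$, $a\colon S\to G$. Applying the Grothendieck-construction-as-coend identity followed by two applications of co-Yoneda yields
\[
\mathcal R(b,a)(h,g) \;=\; \int^{(x,y,m)\in S} G(y,g)\times H(h,x) \;\cong\; \int^{x,y} M(x,y)\times G(y,g)\times H(h,x) \;\cong\; M(h,g),
\]
and the isomorphism is visibly natural in $M$. The main obstacle I anticipate is the coherence of the pseudo-functor structure: although each compositor and unitor is individually transparent, assembling them into the pentagon and triangle axioms entails careful tracking through stacked iso-comma squares, which is where the bulk of the technical content will lie.
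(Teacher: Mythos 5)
Your approach is mathematically sound but genuinely different from the paper's. The paper does not construct $\cat R$ directly and check the coherence axioms by hand; instead it invokes the universal property of $\Span$ (Theorem~\ref{Thm:UP-Span}, imported from \cite[Thm.~5.2.1]{BalmerDellAmbrogio20}). Concretely, it defines two pseudo-functors $\cat R_!\colon \gpd^\co\to\Biset$ and $\cat R^*\colon\gpd^\op\to\Biset$ on the sub-bicategories of ``covariant'' and ``contravariant'' spans (Lemma~\ref{Lem:R-pseudo-funs}), exhibits internal adjunctions $\cat R_!(u)\dashv\cat R^*(u)$ in $\Biset$ (Lemma~\ref{Lem:R-adj}), verifies the Beck--Chevalley condition for iso-comma squares (Lemma~\ref{Lem:R-BC}), and checks that the 2-cell images and structure isomorphisms of $\cat R_!$ and $\cat R^*$ are mates (Lemma~\ref{Lem:R-mates}). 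The universal property then manufactures $\cat R$ and all of its coherence data automatically.

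What you propose is essentially a ``bare-hands'' construction: define $\cat R$ by the coend formula, build unitors by co-Yoneda and compositors by your Grothendieck-construction-as-coend identity, and then verify the pseudo-functor axioms directly. That identity is correct for groupoids (the iso-comma groupoid $a/b'$ is precisely the category of elements of the weight $(s,t)\mapsto G(as,b't)$, which is a discrete (op)fibration over $S\times T$ since $G$ is a groupoid, and a colimit over a discrete fibration reduces to the tensor of the weight with the functor over the base). Indeed, the computation in your compositor is quietly the same coend manipulation the paper performs in its Beck--Chevalley check (Lemma~\ref{Lem:R-BC}), so the two routes converge at the level of the honest calculations. What the paper's route buys is that the pentagon and triangle coherence for $\cat R$ are obtained once and for all from the universal property and never need to be verified; what your route buys is self-containedness, at the cost that the coherence verification must be done by hand across stacked iso-comma squares. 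You are candid that this is ``where the bulk of the technical content will lie'', and you should be aware that it is not a small amount of bookkeeping: it is roughly the content of the cited theorem of \cite{BalmerDellAmbrogio20} specialised to the present situation, and the proposal as written does not actually carry it out. The ``moreover'' clause is handled in both approaches by the same device, the groupoid of elements of the biset, and your sketch of it matches the paper's argument.

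One minor point of precision: in the displayed identity the functor $F\colon S\times T\to\set$ and the weight $(s,t)\mapsto G(as,b't)$ have, \emph{a priori}, incompatible variances, so the integral signs should either be read as colimits (which is what you mean, since $F$ has no diagonal dependence) or the variances should be matched using the fact that $S,T,G$ are groupoids. This is harmless here but should be said explicitly before the identity is deployed in a proof.
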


\begin{Rem}
We are only interested in \emph{finite} groupoids, but everything can be easily extended to arbitrary ones.
The ingredients of \Cref{Thm:main1-intro} appear to be well-known to experts, such as the adjunctions $\cat R_!(u)\dashv \cat R^*(u)$ or the 
`moreover' part, and indeed the component functors $\mathcal R_{H,G}$ between Hom categories have been studied before in much detail; see \eg \cite{Benabou00pp}. 
A closely related statement appears, without proof, as Claim\,13 in \cite{Hoffnung12}. 
In fact a study of bisets (bimodules) between enriched categories in terms of (co)spans is already carried out in \cite{Street80}, using the machinery of `fibrations in bicategories'. Presumably, it should be possibly to unfold the layers of abstractions in \emph{loc.\,cit.} in order to derive \Cref{Thm:main1-intro} from the theory therein. 
Our proof, by constrast, strives to remain as concrete as possible.
\end{Rem}

For our application to biset and Mackey functors, we don't need the full bicategorical strength of \Cref{Thm:main1-intro} but rather only its 1-categorical, linearized shadow.
Fix a commutative ring~$\Bbbk$ and consider the $1$-truncated and $\Bbbk$-linearized versions
\[
\spank := \Bbbk \tau_1 (\Span  )
\quad\textrm{ and } \quad
\bisetcat_\kk := \Bbbk \tau_1 ( \Biset )
\]
of $\Span$ and~$\Biset$, obtained by identifying isomorphic 1-morphisms and by freely extending the resulting Hom abelian monoids to $\kk$-modules (\Cref{Rec:pih}-\ref{Rec:semi-add}).
By construction, $\spank$ and $\bisetcat_\kk$ are two $\kk$-linear additive categories. This means their Hom sets are $\kk$-modules (in fact free of finite type), their composition maps are $\kk$-bilinear, and they admit arbitrary finite direct sums induced by the disjoint unions of groupoids.

The category of $\kk$-linear representations (\ie the $\kk$-linear functor category)
\[
\cat M:= \Rep \spank  := \Fun_\kk (\spank , \kk\MMod)
\]
is, by definition, the category of \emph{global Mackey functors over~$\kk$}. 

\begin{Rem}
There are several versions of global Mackey functors; see \cite{DellAmbrogio19pp} for an overview. This one is defined for all finite groups and comes equipped with both inflation and deflation maps, besides induction, restriction and isomorphism maps. The present definition in terms of groupoids has appeared in \cite{Ganter13pp} and was reformulated in \cite{Nakaoka16} in terms of a certain 2-category~$\mathbb S$, which was later recognized in \cite{Nakaoka16a} to be biequivalent to the 2-category of groupoids.
\end{Rem}

Similarly, the representation category 
\[
\cat F:= \Rep \bisetcat_\kk :=\Fun_\kk (\bisetcat_\kk ,\kk \MMod) 
\]
is easily recognized to be Bouc's category of \emph{biset functors}~\cite{Bouc10} (see \Cref{Rem:bisetfun-comp}). Biset functors too can be understood as a variant of global Mackey functors, similarly defined on all finite groups and equipped with induction, restriction, inflation, deflation and isomorphism maps. 
Indeed, they can be shown to be equivalent to Webb's \emph{globally defined Mackey functors} \cite[\S8]{Webb00} for $\mathcal X$ and $\mathcal Y$ the class of all finite groups.
It is thus natural to compare the two notions, $\cat M$ and~$\cat F$.

After decategorifying and linearizing, \Cref{Thm:main1-intro} yields a full $\kk$-linear functor 
\[
F:=\kk\pih \cat R \colon \spank \longrightarrow \bisetcat_\kk
\]
which is the identity on objects.
In such a situation, it follows easily that precomposition with $F$ induces a fully faithful functor
\[
F^*\colon \cat F \hookrightarrow \cat M
\]
identifying $\cat F$ with a full reflexive $\kk$-linear subcategory of $\cat M$. This is precisely the embedding of Nakaoka's theorem we had mentioned at the beginning, and for which we have just given a transparent construction.

\begin{center} $***$ \end{center}

There is more to this story. Both global Mackey functors and biset functors form \emph{$\kk$-linear tensor categories}, by which we mean symmetric monoidal categories where the tensor functor $-\otimes -$ is $\kk$-linear in both variables (similarly below, by \emph{tensor functor} we will mean a strong symmetric monoidal $\kk$-linear functor.)
It is therefore natural to compare their tensor structures via the embedding~$F^*$. 

To this end, we first notice that both tensor products arise by Day convolution (\Cref{Cons:Day-convo}) from tensor structures on $\spank$ and~$\bisetcat_\kk$, both of which are induced by the cartesian product of groupoids. Moreover, $\spank$ and $\bisetcat_\kk$ are easily seen to be \emph{rigid}, in fact every object is its own tensor dual (\Cref{Ter:tensor}). 
In such a situation we can make use of the following general  abstract theorem:

\begin{Thm}
\label{Thm:main2-intro}
Let $F\colon \cat{C}\to \cat{D}$ be any $\kk$-linear tensor functor between two essentially small $\kk$-linear tensor categories.
Consider the diagram
$$\xymatrix{
& \Rep  \cat C 
\ar@<-.5ex>[dl]_{F_!} \ar@<-.5ex>[dr]_{\Free} &\\ 
\Rep \cat D  \ar[rr]_-E^-\sim 
\ar@<-.5ex>[ru]_{F^*}
&& A\MMod
 \ar@<-.5ex>[ul]_{U}
}$$
consisting of the following standard categorical constructions:
\begin{itemize}
\item
$\Rep  \cat C $ and $\Rep  \cat D $ are the $\kk$-linear categories of representations, as above, equipped with the respective Day convolution tensor products;
\item 
$F^*$ is the restriction functor along~$F$ and $F_!$ denotes its left adjoint, which is a tensor functor; it follows that $F^*$ is lax monoidal;
\item
 $A$ denotes the commutative monoid $F^*(\unit)$ in $\Rep \cat C$, whose multiplication map is induced by the lax monoidal structure of $F^*$ and the (unique) multiplication $\unit \otimes \unit \overset{\sim}{\to} \unit$ of the tensor unit object $\unit= \cat D(\unit_\cat D,-)$ of $\Rep \cat D $;
 \item
 $A\MMod$ denotes the category of left $A$-modules in $\Rep \cat C$, equipped with the tensor product $-\otimes_A-$ over~$A$; we also have the forgetful functor $U$ and its left adjoint $\Free$ sending a $\cat C$-representation $M$ to the free module~$A\otimes M$; 
 \item
 and where, finally, $E$ is the Eilenberg-Moore functor comparing the adjunction $F_!\dashv F^*$ with the adjunction $\Free\dashv U$, \ie $E$ is the unique functor such that $U \circ E = F^*$ and $E \circ F_! \simeq \Free$.
\end{itemize}
Then: 
\begin{enumerate}[\rm(1)]
\item
If $F$ is essentially surjective and the tensor categories~$\cat{C}$ and~$\cat D$ are rigid, $E$ is an equivalence of $\kk$-linear categories. 
\item
If moreover $F$ is full, $E$ is an equivalence of tensor categories. Also, the functors $F^*$ and $U$ are fully faithful and they identify their (equivalent) source tensor categories with the full essential image $\Img(F^*)=\Img(U)$ as a \emph{tensor ideal} in $\Rep \cat C$ \textup(meaning: if $M\in \Rep \cat C$ and $N\in \Img(F^*)$ then $M\otimes N \in \Img(F^*)$\textup).
\end{enumerate}
\end{Thm}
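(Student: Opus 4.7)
The plan is to prove (1) by applying Beck's monadicity theorem to the adjunction $F_! \dashv F^*$, and to deduce (2) by upgrading the resulting equivalence using the full faithfulness of $F^*$ together with the projection formula. The key technical input, needed already to make $E$ well-defined as an $A$-module-valued functor, is the \emph{projection formula}
\[
F^*(N) \otimes M \;\overset{\sim}{\longrightarrow}\; F^*(N \otimes F_!M) \,.
\]
This holds in our rigid setting for the following reasons: $F_!$ is strong monoidal because Day convolution makes left Kan extension along a strong monoidal functor strong monoidal; every representable $\cat C(c,-)\in \Rep \cat C$ is dualizable (with dual $\cat C(c^\vee,-)$, using rigidity of~$\cat C$); the projection formula is then checked directly on representable $M$ by an adjunction/Yoneda argument; and both sides are cocontinuous in~$M$, so the formula extends to all $M\in \Rep \cat C$ by colimit. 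Applied with $N = \unit$, it yields $F^*F_!M \cong A \otimes M$ as monads, so algebras over $T:=F^*F_!$ coincide with $A$-modules and $E \circ F_! \simeq \Free$.

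For (1), I would then verify Beck's two hypotheses. First, $F^*$ preserves all limits and colimits because as a precomposition functor it has both a left adjoint $F_!$ and a right adjoint $F_* = \Ran_F$; in particular it creates (reflexive) coequalizers. Second, $F^*$ is conservative: essential surjectivity of~$F$ ensures that each $d \in \cat D$ is isomorphic to some $F(c)$, so any natural transformation $\alpha$ with $F^*\alpha$ invertible has each $\alpha_d \cong (F^*\alpha)_c$ invertible too. Beck's theorem then makes $E$ an equivalence of $\kk$-linear categories.

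For (2), the additional fullness of~$F$ together with essential surjectivity makes $F^*$ fully faithful: given $\alpha \colon F^*M \to F^*N$, I would define $\beta_d := N(\phi)^{-1} \alpha_c M(\phi)$ using any iso $\phi \colon d \overset{\sim}{\to} F(c)$, and use fullness to check independence of the choice and naturality in~$d$. Via the equivalence~$E$, the forgetful functor $U$ is then also fully faithful. Full faithfulness of~$F^*$ means the counit $F_!F^* \to \id$ is invertible, hence by the projection formula
\[
F^*(N) \otimes F^*(N') \;\cong\; F^*(N \otimes F_!F^*N') \;\cong\; F^*(N \otimes N') \,,
\]
so $F^*$ is actually strong monoidal; consequently $E$ is a tensor equivalence. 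The same projection formula directly yields the tensor-ideal property, since $M \otimes F^*(N') \cong F^*(F_!M \otimes N')$ lies in $\Img F^*$.

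The main obstacle is the proof of the projection formula at the level of arbitrary representations~$M$. This step is where the hypothesis of rigidity of~$\cat C$ (and~$\cat D$) is really used: without it, representables in~$\Rep \cat C$ would not be dualizable and the adjunction calculation on representables would not close up. Once the projection formula is in hand, the rest of the argument is formal manipulation of adjunctions, Beck's theorem, and the Eilenberg-Moore construction.
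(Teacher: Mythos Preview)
Your approach to part~(1) is essentially the paper's: prove the projection formula (you reduce to representables via dualizability and extend by cocontinuity; the paper does an equivalent direct coend computation using the rigid simplification of Day convolution), identify the monad $F^*F_!$ with $A\otimes(-)$, and invoke Beck. This is fine.

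Part~(2), however, contains a genuine error. You conclude from the projection formula and the invertibility of the counit $F_!F^*\Rightarrow\Id$ that $F^*$ is \emph{strong} monoidal. It is not: while your argument does show that the multiplication constraint $\lambda_{N,N'}\colon F^*N\otimes_{\cat C} F^*N'\to F^*(N\otimes_{\cat D} N')$ is invertible, the unit constraint
\[
\iota\colon \unit_{\Rep\cat C}=\cat C(\unit,-)\longrightarrow \cat D(\unit,F-)=F^*(\unit_{\Rep\cat D})=A
\]
is just the map induced by~$F$ on Hom sets out of the unit, and this is only surjective (by fullness), not injective, unless $F$ is faithful. The paper notes this explicitly. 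So ``$F^*$ strong monoidal $\Rightarrow$ $E$ tensor equivalence'' fails at the premise; and in any case the implication itself is not immediate, since $U\circ E=F^*$ with $U$ only \emph{lax} monoidal.

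The repair is to argue directly with the tensor structure on $A\MMod$. There the unit object is $A$ itself, so the unit constraint for $E$ is the identity $A=F^*(\unit)\to E(\unit)$, which \emph{is} invertible. For the multiplication, one must show that the invertible $\lambda_{N,N'}$ factors through the coequalizer projection $\omega\colon F^*N\otimes_{\cat C} F^*N'\twoheadrightarrow F^*N\otimes_A F^*N'$; since $\omega$ is an epimorphism and $\lambda$ is invertible, $\omega$ is then also invertible, yielding the strong monoidal structure on~$E$. Your tensor-ideal argument via $M\otimes F^*(N')\cong F^*(F_!M\otimes N')$ is correct.
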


This theorem collects and improves a few more or less known categorical results. It may be understood as a `tensor monadicity' criterion for quotient functors of $\kk$-linear rigid tensor categories.
The proof can be found in \Cref{sec:monadicity}, together with quick recollections on all constructions involved. 
 As illustration, let us point out an easy special case:

\begin{Exa}
 If $C$ is a commutative $\kk$-algebra, it can be viewed as a rigid tensor $\kk$-linear category~$\cat C$ with a single object, whose endomorphism algebra is~$C$. 
 The multiplication of~$C$
also provides the tensor product of maps $a\otimes b:=ab$, which defines a functor $\otimes\colon \cat C\times \cat C\to \cat C$ by commutativity. If $f\colon C\to D$ is a surjective morphism of commutative $\kk$-algebras, it can be viewed as a functor $F\colon \cat C\to \cat D$ satisfying all the hypotheses of the theorem. In this case, $A$ is just the ring $D$ seen as a monoid object in the tensor category of $C$-modules. The theorem now simply says that the tensor category of $A$-modules $(M,\rho\colon A\otimes_C M\to M)$, in the abstract Eilenberg-Moore sense, inside the tensor category of $C$-modules, identifies with the tensor ideal subcategory of $D$-modules, in the usual sense. 
\end{Exa}

\begin{center} $***$ \end{center}

Now, in order to recover the results of \cite{Nakaoka16} and \cite{Nakaoka16a} we simply specialize \Cref{Thm:main2-intro} by taking $F\colon \cat C\to \cat D$ to be the $\kk$-linear tensor functor $\spank\to \bisetcat_\kk$  obtained from the realization pseudo-functor~$\cat R$ of \Cref{Thm:main1-intro}. 
We get:

\begin{Cor}[Biset functors vs global Mackey functors]
\label{Cor:bisetfun}
There is a canonical equivalence of tensor categories between:
\begin{enumerate}[\rm(1)]
\item The tensor category of biset functors $\mathcal F$ in the sense of Bouc~\cite{Bouc10}, that is, representations of the category of bisets equipped with Day convolution.
\item The category of global Mackey functors~$\cat M$ which are modules over the global Green functor~$A= \bisetcat_\kk(1,F -)$, obtained by restricting the Burnside biset functor along~$F\colon \spank\to \bisetcat_\kk$, equipped with the tensor product over~$A$.
\end{enumerate}
Moreover, both categories identify canonically with the reflexive full tensor ideal of $\cat M$ of those global Mackey functors $M$ satisfying the `deflative relation' 
\[
\mathrm{def}^G_{G/N} \circ \mathrm{inf}^G_{G/N}= \id_{M(G/N)}
\] 
for every normal subgroup $N $ of a group~$G$.
\end{Cor}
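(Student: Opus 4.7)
The plan is to apply Theorem~\ref{Thm:main2-intro} to the $\kk$-linear tensor functor $F := \kk\pih\mathcal{R} \colon \spank \to \bisetcat_\kk$ introduced just above the statement. The four hypotheses are easy to verify: $\mathcal{R}$ is the identity on objects by construction, hence so is~$F$, which is in particular essentially surjective; $F$ is full because every biset is canonically isomorphic to the realization of a span (the `moreover' clause of Theorem~\ref{Thm:main1-intro}); both source and target are rigid tensor categories, every groupoid being self-dual for the cartesian product (Terminology~\ref{Ter:tensor}); and $F$ is a tensor functor because $\mathcal{R}$ preserves cartesian products of groupoids, as is immediate from the coend formula of Theorem~\ref{Thm:main1-intro} once one notes that Hom sets in a product groupoid split as products of Hom sets.

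Part~(2) of Theorem~\ref{Thm:main2-intro} then yields an equivalence of tensor categories $E \colon \cat F \overset{\sim}{\to} A\MMod$ and identifies both with the reflexive full tensor ideal $\Img(F^*) = \Img(U) \subseteq \cat M$. Unwinding $A = F^*(\unit)$ gives the Mackey functor of the statement, $A(G) = \bisetcat_\kk(1, FG) = \bisetcat_\kk(1, G)$, which is precisely the restriction of the Burnside biset functor along~$F$. This establishes parts~(1) and~(2) modulo the concrete description of~$\Img(F^*)$.

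The forward inclusion, that every $M \in \Img(F^*)$ satisfies the deflative relation, is a direct calculation in $\bisetcat_\kk$: the bisets representing $\mathrm{inf}^G_{G/N}$ and $\mathrm{def}^G_{G/N}$ tensor over~$G$ to the identity biset on~$G/N$, so the relation already holds at the level of~$\bisetcat_\kk$. For the converse, since $F$ is full and bijective on objects, $\bisetcat_\kk$ is the quotient of $\spank$ by the two-sided $\kk$-linear ideal $\ker(F)$, and a Mackey functor lies in $\Img(F^*)$ precisely when it annihilates~$\ker(F)$. The main obstacle is thus to show that $\ker(F)$ is generated, as such an ideal, by the elements $\mathrm{def}^G_{G/N} \circ \mathrm{inf}^G_{G/N} - \id_{G/N}$. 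The approach would be to factor any span (up to iso) as a composition of the five basic families of induction, restriction, inflation, deflation, and isomorphism spans, and to match this against the analogous factorization of its realization as a biset: the only new identity imposed by passing to bisets is the deflative one, and every other element of $\ker(F)$ can be reduced to it by iso-comma manipulations. Working this out directly, or invoking the presentation of $\bisetcat_\kk$ implicit in Bouc~\cite{Bouc10}, concludes the proof.
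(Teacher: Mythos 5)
Your proposal takes essentially the same route as the paper: verify the hypotheses of Theorem~\ref{Thm:main2-intro} for $F=\kk\pih\cat R$ (essential surjectivity from being the identity on objects, fullness from the `moreover' clause of Theorem~\ref{Thm:main1-intro}, rigidity and the monoidal structure from the cartesian product of groupoids), apply part~(2), and then identify $\Img(F^*)$ with the deflative Mackey functors by showing that $\ker(F)$ is the ideal generated by $\mathrm{def}^G_{G/N}\circ\mathrm{inf}^G_{G/N}-\id_{G/N}$. You correctly flag the generation claim as the one nontrivial step; the paper likewise treats it as ``\emph{a priori} not obvious'' and defers it to a comparison of explicit presentations of $\spank$ and $\bisk$ in external references (\cite{DellAmbrogio19pp}, \cite{Ganter13pp}, \cite{Nakaoka16}), which is in the same spirit as the factorization-of-spans argument you sketch via Bouc's presentation.
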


Details on the corollary's proof will be given in \Cref{sec:bisetfun}.

We would like to stress the similarity between the above corollary and the much older, and better known, results relating the category $\Mackey_\kk(G)$ of Mackey functors for a fixed finite group~$G$ and the category  $\coMackey_\kk(G)$ of \emph{cohomological} Mackey functors for~$G$. Indeed, this comparison can be obtained by the very same method, as follows.
Recall that by Lindner \cite{Lindner76} we may define
\[ \Mackey_\kk(G)= \Rep \spank (G) \]
where $\spank(G)$ is the $\kk$-linear category of finite left $G$-sets and isomorphism classes of spans of $G$-maps. Recall also that by Yoshida's theorem~\cite{Yoshida83b} we may define 
\[ \coMackey_\kk(G)= \Rep \Perm_\kk (G) \]
where $\Perm_\kk(G)$ is the category of finitely generated permutation $\kk G$-modules. 

Implicit in Yoshida's arguments, and made explicit \eg by Panchadcharam-Street \cite{PanchadcharamStreet07}, is the existence of a $\kk$-linear functor 
\[
\spancat_\kk(G) \longrightarrow \Perm_\kk(G)
\]
sending a left $G$-set $X$ to the permutation module~$\kk [X]$ and a span $X \overset{\;\alpha}{\gets} S\overset{\beta}{\to} Y$ of $G$-maps to $x \mapsto \sum_{s \in \alpha^{-1}(x)} \beta(s)$.
This is easily seen to be a tensor functor $F$ satisfying all the hypotheses of \Cref{Thm:main2-intro}. In this case the theorem yields:

\begin{Cor}[Cohomological vs ordinary Mackey functors]
\label{Cor:cohomack}
For every finite group~$G$, there is an equivalence of tensor categories between:
\begin{enumerate}[\rm(1)]
\item The category $\Rep \Perm_\kk(G)$ of representations of permutation $\Bbbk G$-modules.
\item The category of modules over the fixed-points Green functor $\FPk$ \textup(also known as $\mathrm H^0(-;\kk)$\textup) inside the tensor category $\Mackey_\kk(G)$ of Mackey functors for~$G$, equipped with the tensor product over~$\FPk$. 
\end{enumerate}
Moreover, both categories identify canonically with the reflexive full tensor ideal of those ordinary Mackey functors $M$ for~$G$ which satisfy the `cohomological relation'
\[
\mathrm{ind}_L^H \circ \mathrm{res}^H_L=[H:L] \cdot \id_{M(H)} 
\]
for all subgroups $L\leq H\leq G$. 
\end{Cor}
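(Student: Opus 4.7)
The plan is to parallel the derivation of \Cref{Cor:bisetfun}, this time specializing \Cref{Thm:main2-intro} to the $\kk$-linear functor $F\colon \spank(G)\to \Perm_\kk(G)$ recalled just above the corollary. The first step is to verify the hypotheses of the theorem. That $F$ is a $\kk$-linear tensor functor follows from $F(X\times Y)=\kk[X\times Y]\cong \kk[X]\otimes_\kk\kk[Y]$ with diagonal $G$-action---exactly the tensor product on $\Perm_\kk(G)$---and $F(\mathrm{pt})=\kk$ is its unit. It is essentially surjective since by definition every finitely generated permutation module is of the form $\kk[X]=F(X)$ for some finite $G$-set~$X$. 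It is full by the classical observation that every $\kk G$-linear map $\kk[X]\to \kk[Y]$ is a $\kk$-linear combination of the orbit maps $x\mapsto \sum_{(x,y)\in O} y$ indexed by the $G$-orbits $O\subseteq X\times Y$, each of which is the image under~$F$ of the basic span $X\gets O\to Y$. Finally, both $\spank(G)$ and $\Perm_\kk(G)$ are rigid with every object self-dual, by the same reasoning used for $\spank$ and $\bisetcat_\kk$ (see \Cref{Ter:tensor}).

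Granted these facts, \Cref{Thm:main2-intro} immediately yields a tensor equivalence between $\Rep \Perm_\kk(G)$ and the category $A\MMod$ of modules over $A:=F^*(\unit)$ inside $\Mackey_\kk(G)=\Rep \spank(G)$, together with the identification of both with the full tensor ideal $\Img(F^*)\subseteq \Mackey_\kk(G)$. It then remains to match $A$ with the fixed-points Green functor~$\FPk$ and to describe $\Img(F^*)$ via the cohomological relation. For the former, unfolding the definition gives
\[
A(X)=\Perm_\kk(G)(\kk,F(X))=\Hom_{\kk G}(\kk,\kk[X])=\kk[X]^G,
\]
and the lax-monoidal multiplication on $A$ coincides with the usual pointwise product on fixed points, so $A\cong \FPk$ as Green functors.

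For the essential image, a Mackey functor $M\in \Mackey_\kk(G)$ lies in $\Img(F^*)$ exactly when $M\colon \spank(G)\to \kk\MMod$ factors through~$F$; since $F$ is $\kk$-linear, full and essentially surjective, this is equivalent to $M$ vanishing on the two-sided ideal $\Ker(F)\subset \spank(G)$ of morphisms collapsed by~$F$. By Yoshida's theorem \cite{Yoshida83b} (see also \cite{PanchadcharamStreet07}) this ideal is generated by the cohomological relations $\mathrm{ind}_L^H\circ \mathrm{res}^H_L=[H:L]\cdot \id_{G/H}$, as one verifies directly by computing in $\Perm_\kk(G)$ that the composite $\kk[G/H]\to\kk[G/L]\to\kk[G/H]$ sends $gH\mapsto \sum_{xL\subseteq gH} xL\mapsto [H:L]\cdot gH$. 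Hence $M\in\Img(F^*)$ iff $\mathrm{ind}_L^H\circ \mathrm{res}^H_L=[H:L]\cdot \id_{M(H)}$ for every $L\leq H\leq G$, as claimed. The principal point, beyond purely formal bookkeeping, is thus the clean identification of $\Ker(F)$ via Yoshida's classical theorem; once this is granted, every remaining assertion of the corollary---including reflexivity and the tensor-ideal property of the embedding---follows at once from \Cref{Thm:main2-intro}.
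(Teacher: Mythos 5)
Your proposal is correct and follows essentially the same route as the paper: verify that Yoshida's functor $F=\Yo$ satisfies the hypotheses of \Cref{Thm:main2-intro} (rigidity of source and target, $F$ a full, essentially surjective $\kk$-linear tensor functor), apply the theorem, identify $A=F^*(\unit)$ with $\FPk$, and identify the essential image via the cohomological relations. Your explicit computation $A(X)=\Hom_{\kk G}(\kk,\kk[X])=\kk[X]^G$ and your concrete fullness argument via orbit maps are both sound and a touch more self-contained than the paper's pointers to \cite{Yoshida83b} and \cite[Prop.\,2.1.10]{Huglo19pp}.

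The one place where your logical route diverges slightly is the final step. The paper establishes the cohomological characterization by citing \cite[Prop.\,16.3]{ThevenazWebb95}, which \emph{directly} identifies $\FPk$-modules with cohomological Mackey functors; the translation into a statement about $\Ker(\Yo)$ being generated by the elements $[G/H\gets G/L\to G/H]-[H\!:\!L]\,\id_{G/H}$ is then recorded only afterward, in a remark. You instead take the kernel-generation route as primary: $M\in\Img(F^*)$ iff $M$ kills $\Ker(F)$, and $\Ker(F)$ is generated by the cohomological relations. The two routes are equivalent and both legitimate, but be aware that the kernel-generation statement is not literally ``Yoshida's theorem''---it is a re-writing of the calculations in Th\'evenaz--Webb (and made explicit in \cite{PanchadcharamStreet07}), as the paper itself notes. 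Attributing it directly to \cite{Yoshida83b} is a slight overstatement, though the content is the same.
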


Details on this corollary's proof will be given in \Cref{sec:cohomack}.

\begin{Rem}
Essentially the same way of comparing the various descriptions of cohomological Mackey functors was already explained in~\cite[\S10]{PanchadcharamStreet07}. Our present exposition also makes explicit the identification of the associated tensor structures.
\end{Rem}

\begin{Rem}
This article is based on the second author's PhD thesis. 
Notations and conventions have been adapted in order to agree with those of the monograph \cite{BalmerDellAmbrogio20} and the survey article \cite{DellAmbrogio19pp}, where groupoids and spans are similarly used in order to compare various kinds of Mackey (1- and 2-)functors. 
\end{Rem}

\begin{Ack*}
We are grateful to Paul Balmer and an anonymous referee for useful comments on the manuscript and to Steve Lack for pointing out to us the relevance of~\cite{Street80}.
\end{Ack*}

\section{Preliminaries on coends and linear coends}
\label{sec:preliminaries}%

In this article we make extensive use of coends and their calculation rules, both in the basic set-theoretic setting and in the linear setting over a commutative ring. 
Coends vastly generalize the familiar tensor products of modules over rings to more general (possibly enriched) functors. 
Heuristically, they are the universal procedure for identifying a left and a right action of the same functorial variable.

\begin{Rec}[{Coends; \cite[IX.6]{MacLane98}}]
\label{Rec:coend}
Consider a functor $H\colon \cat C^\op\times \cat C\to \Set$ into sets for some (essentially small) category~$\cat C$. The \emph{coend of~$H$} is a set denoted 
\[ \int^{c\in C}H(c,c) \quad\textrm{ or just }\quad \int^{c}H\]
which comes equipped with canonical maps $H(x,x)\to \int^cH$ (for all $x\in \Obj \cat C$) forming a dinatural transformation and satisfying a suitable universal property among all such. 
For our purposes, it will suffice to know that it can be computed by the following coequalizer in~$\Set$:
\[
\int^{c\in \cat C} H(c,c) 
= \mathrm{coeq}\left( 
\xymatrix{ 
{\displaystyle{\coprod_{(\alpha\colon c'\to c) \in \Mor \cat C} H(c,c') }} \ar@<.5ex>[r]^-{H(\id,\alpha)} \ar@<-.5ex>[r]_-{H(\alpha, \id)} & {\displaystyle{ \coprod_{c\in \Obj \cat C}} H(c,c) }
}
\right)
\]
Thus an element of the coend is the equivalence class $[x]_c$ of some $x\in H(c,c)$ for some object $c\in \cat C$, for the equivalence relation generated by setting $[x]_c = [x']_{c'}$ whenever there is some morphism $\alpha \in \cat C( c', c)$ and some $y\in H(c,c')$ such that $H(\alpha,\id)(y)=x$ and $H(\id,\alpha)(y)=x'$. (Note that if $\cat C$ is a groupoid the latter condition directly yields an equivalence relation, without the need to generate one.)

The canonical maps are the evident ones coming with the equalizer. 
\end{Rec}

We will need the following well-known (and easily verified) formulas:

\begin{Lem}[{Fubini; \cite[IX.8]{MacLane98}}]
\label{Lem:Fubini}
The coend of $H \colon (\cat C_1 \times \cat C_2)^\op\times (\cat C_1 \times \cat C_2)\to \Set$
can be computed one variable at a time, in either order:
\[
\int^{(c_1,c_2)\in \cat C_1\times \cat C_2} H \simeq 
\int^{c_1\in \cat C_1} \int^{c_2\in \cat C_2} H \simeq 
\int^{c_2\in \cat C_2} \int^{c_1\in \cat C_1} H\,.
\]
The isomorphisms are the identity maps on representatives $x\in H(c_1,c_2,c_1,c_2)$.
\qed
\end{Lem}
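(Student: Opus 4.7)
The plan is to verify the Fubini isomorphisms directly from the explicit coequalizer presentation recalled in \Cref{Rec:coend}. The observation that makes the statement almost tautological is that a representative of an element of $\int^{(c_1,c_2)} H$ is some $x\in H(c_1,c_2,c_1,c_2)$, and likewise a representative of an element of $\int^{c_1}\int^{c_2} H$ is some $x\in H(c_1,c_2,c_1,c_2)$: only the generating equivalence relations need to be compared.

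First I would construct the comparison map $\Phi\colon \int^{(c_1,c_2)} H \to \int^{c_1}\int^{c_2} H$ by appealing to the universal property of the coend on the left. Namely, for each pair $(c_1,c_2)$ the composite
\[
H(c_1,c_2,c_1,c_2) \longrightarrow \int^{c_2} H(c_1,c_2,c_1,c_2) \longrightarrow \int^{c_1}\int^{c_2} H
\]
is dinatural in the pair $(c_1,c_2)$: any morphism $(\alpha_1,\alpha_2)\colon (c_1',c_2')\to (c_1,c_2)$ of $\cat C_1\times\cat C_2$ factors as $(\alpha_1,\id_{c_2})\circ(\id_{c_1'},\alpha_2)$, and the first factor is absorbed by the inner coend while the second is absorbed by the outer one. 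In the opposite direction, I would build $\Psi$ by using the universal property twice: for each fixed $c_1$, the assignment $x\mapsto [x]_{(c_1,c_2)}$ is dinatural in $c_2$ because $(\id_{c_1},\alpha_2)$ is a morphism of the product category, and the resulting family of maps $\int^{c_2}H(c_1,-,c_1,-)\to \int^{(c_1,c_2)}H$ is then dinatural in $c_1$ for the analogous reason applied to $(\alpha_1,\id_{c_2})$.

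The main point, such as it is, is organizing the bookkeeping so as to recognize that $\Phi$ and $\Psi$ act as the identity on representatives and are therefore mutually inverse — and, in particular, that the claimed form of the isomorphisms is literally how they were constructed. The second isomorphism of the statement is obtained by swapping the roles of $\cat C_1$ and $\cat C_2$. The only possible subtlety is that, in the definition of $\Psi$, one must check that the outer dinaturality in $c_1$ genuinely holds after having already quotiented in the $c_2$-direction; but this is immediate from the explicit coequalizer description, since the $(\alpha_1,\id)$-relation is visible already at the level of $\coprod_{c_1}\int^{c_2}H(c_1,-,c_1,-)$ and survives the further quotient.
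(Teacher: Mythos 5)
Your proof is correct. Note that the paper itself gives no argument for this lemma --- it is stated with a \texttt{\textbackslash qed} and a citation to Mac Lane --- so there is no ``paper proof'' to compare against; your argument is precisely the standard first-principles verification that the reference would give. The strategy is sound: build $\Phi$ and $\Psi$ from the universal property of the coend, observing that both sides are generated by the same representatives and that the elementary relations in the iterated coend (coming from morphisms of the special shapes $(\id,\alpha_2)$ and $(\alpha_1,\id)$) are a subset of those in $\int^{(c_1,c_2)}H$, while conversely every $(\alpha_1,\alpha_2)$ factors through them, so the generated equivalence relations agree.

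One point worth tightening: your dinaturality check for $\Phi$ implicitly uses the fact that if the cowedge square for a family $\{\omega_c\}$ commutes for two composable morphisms $f$ and $g$, then it commutes for $f\circ g$. This is true and elementary (a three-step element chase using $H(f,g)=H(f,\id)\circ H(\id,g)=H(\id,g)\circ H(f,\id)$), and it is exactly what justifies reducing dinaturality in $\cat C_1\times\cat C_2$ to dinaturality along the generating morphisms $(\alpha_1,\id)$ and $(\id,\alpha_2)$ --- but it deserves to be stated, since dinatural transformations famously fail to compose in other senses and a careless reader might worry. Also, in the sentence ``the first factor is absorbed by the inner coend,'' the word ``first'' should be read as ``first applied'' (i.e.\ $(\id,\alpha_2)$, the right-hand factor as written), not the leftmost-written factor; as written it is ambiguous.
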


\begin{Lem}[{co-Yoneda}]
\label{Lem:coYoneda}
For every functor $M\colon \cat C\to \Set$ and every object $x\in \cat C$, there is an isomorphism
\[
\int^{c\in \cat C} \cat C(c,x) \times M(c) \simeq M(x)
\]
natural in~$x$, given by evaluation $[\alpha, m]_c \mapsto M(\alpha)(m)$ and with inverse given by $m\mapsto[\id_x, m]_x$.
\qed
\end{Lem}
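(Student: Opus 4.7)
The plan is to write down the two maps explicitly, both at the level of the coproduct $\coprod_c \cat C(c,x)\times M(c)$ appearing in the coequalizer description of Recollection~2.1, and then to verify that they are mutually inverse and natural in~$x$. Since $\cat C$ is only required to be essentially small and coends in $\Set$ admit a direct presentation by equivalence classes, no further machinery is needed.

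First I would define the forward map. From the family of maps $\cat C(c,x)\times M(c)\to M(x)$, $(\alpha, m)\mapsto M(\alpha)(m)$, I would obtain, after passing to the coproduct, a candidate $\varphi$. To see that $\varphi$ descends to the coend, I would check that the two parallel arrows in the coequalizer of Recollection~2.1 are coequalized: given $\alpha\colon c'\to c$, $\beta\in\cat C(c,x)$ and $m\in M(c')$, I need $M(\beta\alpha)(m)=M(\beta)(M(\alpha)(m))$, which is pure functoriality of~$M$. The backward map is even simpler: $\psi(m):=[\id_x,m]_x$, the image of $m$ under the canonical map in at $c=x$.

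Next, I would check the two round-trips. The easy direction is $\varphi\circ\psi$: we have $\varphi(\psi(m))=M(\id_x)(m)=m$ by functoriality of~$M$. For the other composite, $\psi(\varphi([\alpha,m]_c))=[\id_x,M(\alpha)(m)]_x$, and I would identify this with $[\alpha,m]_c$ in the coend. This is the one place where the coend relation is genuinely used: applying the functor $H(a,b)=\cat C(a,x)\times M(b)$ to the morphism $\alpha\colon c\to x$ and to the pair $(\id_x,m)\in H(x,c)=\cat C(x,x)\times M(c)$, Recollection~2.1 gives the generating identification
\[
\bigl[H(\id,\alpha)(\id_x,m)\bigr]_x \;=\; \bigl[H(\alpha,\id)(\id_x,m)\bigr]_c,
\]
i.e.\ $[\id_x,M(\alpha)(m)]_x=[\id_x\circ\alpha,m]_c=[\alpha,m]_c$, exactly as required.

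Finally, for naturality in~$x$, given $f\colon x\to x'$ the induced map on coends sends $[\alpha,m]_c$ to $[f\alpha,m]_c$, and $M(f\alpha)(m)=M(f)(M(\alpha)(m))$ by functoriality, so the naturality square commutes on representatives and hence on classes. I do not foresee a real obstacle: the only non-formal step is the single application of the generating coend relation in the $\psi\circ\varphi$ verification, and this is immediate from the explicit description in Recollection~2.1.
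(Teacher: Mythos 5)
Your proof is correct and is exactly the standard argument the paper tacitly invokes: the paper states this lemma without proof (marking it with \qed as well-known), and your explicit check via the coequalizer presentation of Recollection~2.1 — including the single nontrivial use of the generating relation applied to $\alpha\colon c\to x$ and $(\id_x,m)\in H(x,c)$ — is the expected verification. There is nothing to add or correct.
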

\begin{center} $*\;*\;*$ \end{center}

In \Cref{sec:R}, the above set-theoretical coends will be used to horizontally compose bisets. However, most of the time we will work linearly over some base commutative ring~$\kk$, and in particular we will need to use the $\kk$-enriched version of coends. This requires replacing the `base' Cartesian category of sets with the tensor category of $\kk$-modules.

Fix the commutative ring~$\kk$. 

\begin{Not} We denote by $\kk\MMod$ the category of all $\kk$-modules and $\kk$-linear maps. It is a complete and cocomplete abelian category. It is also a tensor category, \ie a symmetric monoidal category, by the usual tensor product $-\otimes_\kk-$ over~$\kk$.
\end{Not}

\begin{Ter}
A \emph{$\kk$-linear category} $\cat C$ is a category enriched over $\kk$-modules: its Hom sets $\cat C(x,y)$ are equipped with the structure of a $\kk$-module and the composition maps $\cat C(y,z) \times \cat C(x,y)\to \cat C(x,z)$ are $\kk$-bilinear. If $\cat C, \cat D$ are $\kk$-linear categories, a \emph{$\kk$-linear functor} $F\colon \cat C\to \cat D$ is a functor $F$ from $\cat C$ to $\cat D$ such that its component maps $F=F_{x,y}\colon \cat C(x,y)\to \cat D(x,y)$ are all $\kk$-linear.
\end{Ter}

\begin{Rec}[$\kk$-linear coends]
\label{Rec:k-coend}
By replacing $\Set$ with $\kk\MMod$ and requiring everything to be $\kk$-linear in \Cref{Rec:coend}, we get the notion of a \emph{$\kk$-linear} or \emph{$\kk$-enriched} coend. Thus, concretely, for a $\kk$-linear category $\cat C$ and a $\kk$-bilinear functor $H\colon \cat C^\op \times \cat C\to \kk\MMod$ (or equivalently a $\kk$-linear functor $H\colon \cat C^\op\otimes_\kk \cat C\to \kk\MMod$ for the appropriate notion of tensor product of $\kk$-categories), the $\kk$-linear coend of~$H$, again denoted $\int^c H$ or $\int^{c\in \cat C}H(c,c)$, is the $\kk$-module computed by the same coend diagram as in \Cref{Rec:coend} but now taken in $\kk\MMod$. Hence a general element of $\int^c H$ is now a \emph{finite $\kk$-linear combination} of classes $[x]_v$ (for $v\in \Obj \cat C$ and $x\in H(c,c)$).

We will occasionally refer to such simple elements $[x]_v\in \int^c H$ as \emph{generators}. 
If $H= F \otimes_\kk G$ is an object-wise tensor product of two (or more) functors, as will often be the case, we will write $[x,y]_{v}$ rather than the cumbersome $[x\otimes y]_v$.
\end{Rec}

\begin{Rem}
\label{Rem:Yoneda-Fubini}
The Fubini \Cref{Lem:Fubini} and the co-Yoneda \Cref{Lem:coYoneda} also hold for $\kk$-linear coends, as $\kk$-linear isomorphisms, with the same proofs.
As the latter formula uses the tensor structure of the base category, it must be adapted and now takes the form of an isomorphism
\begin{equation} \label{eq:kcoYoneda}
\int^{c\in \cat C} \cat C(c,-) \otimes_\kk M(c) \simeq M
\end{equation}
of $\kk$-linear functors $\cat C \longrightarrow \kk\MMod$. 
(This is the special case $F=\Id_\cat C$ of a $\kk$-linear left Kan extension as in~\Cref{Cons:Lan-Ran-Rep} below.)
\end{Rem}

\section{Tensor-monadicity for functor categories}
\label{sec:monadicity}%

This section is dedicated to the proof of \Cref{Thm:main2-intro}, and to recalling all the relevant categorical constructions.

Fix throughout a commutative ring~$\kk$. We will compute with $\kk$-linear coends, as in \Cref{Rec:k-coend}.

\begin{Not}
\label{Not:Funk}
Let $\cat C$ be a small $\kk$-linear category (one with only a set of objects), or more generally, an essentially small one (one equivalent to a small (sub-)category). Then we may consider its \emph{category of representations}, namely the category
\[
\Rep \cat C := \Fun_\kk (\cat C, \kk\MMod)
\]
of all $\kk$-linear functors into $\kk$-modules and natural transformations between them. 
Note that $\Rep \cat C$ is again a $\kk$-linear category and is abelian, complete and cocomplete; its $\kk$-action, limits and colimits are taken in $\kk\MMod$,  object-wise on~$\cat C$.
\end{Not}

\begin{Cons} [Kan extensions]
\label{Cons:Lan-Ran-Rep}
Let $F\colon \cat C\to \cat D$ be a $\kk$-linear functor between two essentially small $\kk$-linear categories. There is a \emph{restriction} functor $F^*\colon \Rep \cat D\to \Rep \cat C$ sending a $\kk$-linear functor $M\colon \cat D\to \kk\MMod$ to $M\circ F$. Evidently, $F^*$ is $\kk$-linear and exact. 
Since $\kk\MMod$ is complete and cocomplete, $F^*$ admits both a left and a right adjoint:
\[
\xymatrix{
\Rep \cat C \ar@/_3ex/[d]_{F_! \,:=\,\Lan_F} \ar@/^3ex/[d]^{\Ran_F \, =: \, F_*}  \\
\Rep \cat D \ar[u]|{F^*} 
}
\]
This is guaranteed by the theory of Kan extensions (\cite[X]{MacLane98}), which moreover provides explicit formulas for them.
Recall \eg that the left Kan extension~$F_!$ can be computed at each $M\in \Rep \cat C$ by the following ($\kk$-linear) coend:
\[
F_!(M) = \int^{x\in \cat D}  \cat D(Fx, -) \otimes_\kk M(x)
\quad \colon \quad \cat D \longrightarrow \kk\MMod
\]
(see \cite[X.7]{MacLane98} as well as \cite[(4.25)]{Kelly05} for the enriched version). 
\end{Cons}

\begin{Cons}[The Eilenberg-Moore adjunction]
\label{Cons:EM}
Recall (\eg from \cite[VI]{MacLane98}) that every adjunction $L\colon \cat A \rightleftarrows \cat B \,:\!R$ gives rise to a \emph{monad} $\mathbb A$ on the category~$\cat A$, that is a monoid $\mathbb A=(\mathbb A, \mu , \eta )$ in the endofunctor category $\End(\cat A)$. More precisely, as a functor we have $\mathbb A=R\circ L$; its multiplication is the natural transformation $\mu = R \varepsilon L \colon \mathbb A\circ \mathbb A = RLRL \Longrightarrow RL= \mathbb A$, where $\varepsilon\colon LR\Rightarrow \Id_\cat B$ is the counit of the adjunction; and its unit map is the unit of the adjunction, $\eta \colon \Id_\cat A \Rightarrow RL=\mathbb A$.

As with any such monad, we may define its \emph{Eilenberg-Moore category} $\mathbb A\MMod_{\cat A}$, whose objects are \emph{left modules} (a.k.a.\ algebras) in $\cat A$ over the monad. More precisely, an object of $\mathbb A\MMod_{\cat A}$ is a pair $(M, \rho)$ where $M\in \Obj \cat A$ and $\rho \colon \mathbb A M\to M$ is a map $\rho\colon RL(M)\to M$ in $\cat A$ satisfying the usual associativity and unit axioms of a left action, expressed by  commutative diagrams in~$\cat A$:
\[
\vcenter{
\xymatrix{ 
\mathbb A \mathbb A M \ar[d]_{\mathbb A \rho } \ar[r]^-{\mu_M} & \mathbb AM \ar[d]^{\rho} \\
\mathbb A M \ar[r]^-{\rho} & M 
}}
\quad\quad\quad\quad\quad
\vcenter{
\xymatrix{
M \ar@{=}[dr] \ar[r]^-{\eta_M}& \mathbb A M \ar[d]^{\rho} \\
& M
}
}
\]
A morphism $(M,\rho)\to (M',\rho')$ in $\mathbb A\MMod_\cat A$ is a morphism $\varphi\colon M\to M'$ preserving the actions:
\[
\vcenter{
\xymatrix{
\mathbb A M \ar[d]_{\rho} \ar[r]^-{\mathbb A \varphi} & \mathbb A M' \ar[d]^{\rho'} \\
M \ar[r]^-{\varphi} & M'
}
}
\]
There is an evident forgetful functor $U_\mathbb A\colon \mathbb A\MMod_\cat A\to \cat A$ which simply forgets the actions~$\rho$, as well as a left adjoint $F_\mathbb A$ sending any object $M\in \cat A$ to the \emph{free module} $(\mathbb AM, \mu_M \colon \mathbb A\mathbb AM \to \mathbb AM)$ and a morphism $\varphi$ to~$\mathbb A\varphi$. 
The adjunction $(F_{\mathbb A}, U_{\mathbb A})$ induces on $\cat A$ the same monad~$\mathbb A= RL = U_{\mathbb A}F_{\mathbb A}$. 
This is in fact the \emph{final} adjunction realizing~$\mathbb A$, in that there is a unique comparison functor $E=E_\mathbb A\colon \cat B\to \bbA\MMod_\cat A$ 
\[
\xymatrix{
& \cat A \ar@<-1ex>[dl]_L \ar[rd]_{F_\bbA\!\!} & \\
\cat B \ar[ru]_R \ar[rr]_-{E_\bbA} && {\bbA\MMod_{\cat A}} \ar@<-1ex>[ul]_{U_\bbA}
}
\]
such that $U_\bbA \circ E_\bbA = R$ and $F_\bbA = E_\bbA \circ L$. Concretely, $E$ sends an object $N\in \cat B$ to $E(N)= (RN, R\varepsilon_N\colon \bbA RN = RLR N\to RN)$ and a map $\varphi\colon N\to N'$ to~$R\varphi$. 

Note that if $L$ and $R$ are $\kk$-linear functors between $\kk$-linear categories, then $\bbA\MMod_\cat A$ is also a $\kk$-linear category and the forgetful, free module and comparison functors are all $\kk$-linear.
\end{Cons}

\begin{Prop} \label{Prop:plain-monadicity}
Let $F\colon \cat C\to \cat D$ be a $\kk$-liner functor between two essentially small $\kk$-linear categories. Suppose that $F$ is essentially surjective. Then the adjunction $F_!\colon \Rep \cat C \rightleftarrows \Rep \cat D\,: \! F^*$ of \Cref{Cons:Lan-Ran-Rep} is monadic, that is the comparison $E\colon  \Rep \cat D \overset{\sim}{\to} (F^*F_!)\MMod_{\Rep \cat C}$ is a (\,$\kk$-linear) equivalence.
\end{Prop}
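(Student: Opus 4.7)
The cleanest approach is to invoke Beck's (crude) monadicity theorem: given the adjunction $F_! \dashv F^*$ established in \Cref{Cons:Lan-Ran-Rep}, it suffices to check that the right adjoint $F^*$ is conservative and preserves coequalizers of reflexive pairs (equivalently here, all colimits). Since $E$ is automatically a $\kk$-linear functor commuting with the two forgetful/restriction functors, once $E$ is an equivalence of ordinary categories it is automatically an equivalence of $\kk$-linear categories.

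\textbf{Conservativity of $F^*$.} This is the only step where essential surjectivity is used. Let $\varphi \colon M \to N$ be a morphism in $\Rep \cat D$ such that $F^*\varphi$ is invertible, i.e.\ $\varphi_{Fc}$ is an isomorphism of $\kk$-modules for every $c \in \cat C$. Given any $d \in \cat D$, essential surjectivity furnishes some $c \in \cat C$ and an isomorphism $\alpha \colon d \isoto Fc$ in $\cat D$. Naturality of $\varphi$ yields the identity
\[
\varphi_d \;=\; N(\alpha)^{-1} \circ \varphi_{Fc} \circ M(\alpha),
\]
and since the three factors on the right are all invertible in $\kk\MMod$, so is $\varphi_d$. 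Thus $\varphi$ itself is an isomorphism, proving that $F^*$ reflects isomorphisms.

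\textbf{Preservation of colimits.} By \Cref{Not:Funk}, both $\Rep \cat C$ and $\Rep \cat D$ are cocomplete, with colimits formed objectwise in $\kk\MMod$. The restriction functor is defined by $(F^*M)(c) = M(Fc)$, i.e.\ objectwise evaluation composed with $F$, so it trivially commutes with colimits formed objectwise. In particular $F^*$ preserves all coequalizers, and a fortiori the coequalizers of reflexive pairs needed for the crude form of Beck's theorem.

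\textbf{Conclusion.} With both hypotheses verified, Beck's monadicity theorem guarantees that the comparison functor $E \colon \Rep \cat D \to (F^*F_!)\MMod_{\Rep \cat C}$ is an equivalence. The only genuine subtlety is the conservativity step, which is why essential surjectivity of $F$ is exactly the right hypothesis for the proposition; everything else is formal from the fact that we are working inside functor categories valued in the bicomplete category $\kk\MMod$.
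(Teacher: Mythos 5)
Your proof is correct, and it follows a route closely parallel to the paper's but not identical. The paper invokes a criterion tailored to abelian categories: for an exact functor between abelian categories admitting a left adjoint, Beck monadicity reduces to checking that $F^*$ is \emph{faithful} (citing Chen--Chen--Zhou), which then follows from essential surjectivity exactly as you argue. You instead invoke the classical \emph{crude} form of Beck's theorem, checking that $F^*$ is conservative and preserves reflexive coequalizers (the latter being automatic because $F^*$ is restriction along $F$ and colimits in functor categories are objectwise). In the abelian setting these are interchangeable, since an exact functor between abelian categories is faithful if and only if it is conservative; the key step in both proofs is the observation that essential surjectivity of $F$ propagates the isomorphism (resp.\ vanishing) at objects $Fc$ to all objects $d \simeq Fc$ of $\cat D$ by naturality. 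Your version has the small advantage of not requiring the abelian-specific lemma from the literature, at the cost of having to mention colimit preservation explicitly; both are clean and correct.
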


\begin{proof}
This is a consequence of the Beck monadicity theorem \cite[VI.7]{MacLane98}. 
In fact, $F^*$ is an exact functor between two abelian categories and admits a left adjoint. In this situation, the hypotheses of Beck monadicity reduce easily to $F^*$ being faithful  (see  \eg \cite[Thm.\,2.1]{ChenChenZhou15}), and the latter follows from the essential surjectivity of~$F$. Indeed, if $\varphi\colon N\Rightarrow N'$ is a natural transformation such that $F^* \varphi =0 $, then we have $\varphi_{Fc}=0\colon NFc\to N'Fc$ for every $c\in \cat C$ and therefore also $\varphi_d=0\colon Nd\to N'd$ for all $d\in \cat D$, by way of some isomorphism $Fc\simeq d$ and the naturality of~$\varphi$.
\end{proof}

\begin{Ter}
\label{Ter:tensor}
By a \emph{tensor category} we always mean a symmetric monoidal category (\cite[XI]{MacLane98}).
We will write $\otimes$ and $\unit$ (possibly with some decoration) for the tensor functor and the tensor unit object.
A \emph{$\kk$-linear tensor category} is a category which is simultaneously a tensor category and a $\kk$-linear category and whose tensor functor $-\otimes -$ is $\kk$-linear in both variables. 
A tensor category $\cat C$ is \emph{rigid} if every object $X$ admits a \emph{tensor dual}~$X^\vee$, meaning that there is a ($\kk$-linear) isomorphism
$ \cat C(X \otimes Y,Z) \simeq \cat C(Y, X^\vee \otimes Z) $ natural in $Y,Z\in \cat C$; in other words, the endofunctors $X\otimes -$ and $X^\vee \otimes-$ on $\cat C$ are adjoint. If $\cat C$ is rigid, then $X\mapsto X^\vee$ extends canonically to an equivalence $\cat C\simeq \cat C^\op$ of $\kk$-linear tensor categories.
\end{Ter}

\begin{Cons} [The tensor category $A\MMod_\cat A$]
\label{Cons:tensor-EM}
Let $A=(A,m,u)$ be a monoid in a tensor category~$\cat A$; thus we have a multiplication $m \colon A\otimes A\to A $ and unit $ u\colon \unit \to A$ in $\cat C$ making the usual associativity and unit diagrams commute. Then $\bbA := A\otimes (-)\colon \cat C\to \cat C$ is a monad on~$\cat C$ with multiplication $\mu\otimes-$ and unit~$\eta\otimes-$, and we may form the Eilenberg-Moore  module category $A\MMod_\cat A:= \bbA\MMod_\cat A$ and the adjunction $F_A \dashv U_A$ of \Cref{Cons:EM}.
If $A$ is \emph{commutative} (meaning of course that $m = m \sigma$ where $\sigma \colon A\otimes A \simeq A \otimes A$ is the symmetry isomorphism of~$\cat C$) and $\cat A$ admits sufficiently many coequalizers,
then $A\MMod_\cat A$ inherits the structure of a tensor category. Its tensor unit is the left $A$-module~$A$,
its tensor functor $-\otimes_A-$ is defined for all $(M,\rho),(M',\rho') \in A\MMod_\cat A$ by the coequalizer
\begin{equation} \label{eq:equalizer-over-A}
\xymatrix{
M \otimes A \otimes M' \ar@<.5ex>[rr]^-{\id \otimes \rho} \ar@<-.5ex>[rr]_-{\rho \sigma \otimes \id }&&
 M \otimes M'\ar[r] & M \otimes_A M'
}
\end{equation}
equipped with the evident induced left $A$-action. 
The unit, associativity and symmetry isomorphisms for $A\MMod_\cat A$ are induced by those of~$\cat A$.  

Note that if the tensor category $\cat A$ is $\kk$-linear then evidently so is $A\MMod_\cat A$.
\end{Cons}

\begin{Cons}[The Day convolution product; \cite{Day70}]
\label{Cons:Day-convo}
Let $\cat C$ be an essentially small $\kk$-linear tensor category. 
The representation category $\Rep \cat C$ inherits from $\cat C$ a $\kk$-linear tensor structure, called \emph{Day convolution}, which can be characterized as the unique (closed $\kk$-linear) tensor structure $-\otimes_\cat C -$ on $\Rep \cat C$ which preserves colimits in both variables and which makes the Yoneda embedding $\cat C^\op\to \Rep \cat C$ a tensor functor. The Day convolution of $M,N\in \Rep \cat C$ is computed by the ($\kk$-linear) coend
\[
M \otimes_\cat C N = \int^{u,v \in \cat C} \cat C(u\otimes v, -) \otimes_\kk M(u) \otimes_\kk N(v) \,.
\]
The unit object in $\Rep \cat C$ is the functor $\unit:=\cat C(\unit,-)$ corepresented by the unit object of~$\cat C$.
The left unitor, right unitor, associator and symmetry of the Day convolution product are obtained by combining, in the evident way, those of $\cat C$ with some canonical identifications of coends (see \eg \cite[Prop.\,1.2.16]{Huglo19pp} for details).
\end{Cons}

\begin{Lem} 
\label{Lem:rigid-Day}
If the tensor category $\cat C$ is rigid, the Day convolution product of $\Rep \cat C$ can be computed by either one of the two coends
\begin{align*}
M\otimes_\cat C N 
 & \simeq \int^{v\in \cat C} M(v^\vee \otimes - ) \otimes_\kk N(v) \\
 & \simeq \int^{u \in \cat C} M(u) \otimes_\kk N(u^\vee \otimes -)
\end{align*}
where $x^\vee$ denotes the tensor-dual of an object $x\in \cat C$.
\end{Lem}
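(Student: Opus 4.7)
The plan is to start from the defining double coend of the Day convolution product
\[
M \otimes_\cat C N = \int^{u,v \in \cat C} \cat C(u \otimes v, -) \otimes_\kk M(u) \otimes_\kk N(v)
\]
recalled in \Cref{Cons:Day-convo}, and then to collapse one of the two coend variables by combining the rigidity adjunction with the enriched co-Yoneda formula~\eqref{eq:kcoYoneda}. Concretely, the rigidity of $\cat C$ (\Cref{Ter:tensor}) provides, for each $v\in \cat C$, a natural isomorphism $\cat C(u\otimes v,w) \simeq \cat C(u, v^\vee \otimes w)$ in $u$ and~$w$; taking $w$ to be the free variable yields
\[
\cat C(u\otimes v, -) \simeq \cat C(u, v^\vee \otimes -).
\]
Similarly, using the symmetry of $\cat C$ first, we also get $\cat C(u\otimes v,-) \simeq \cat C(v\otimes u, -) \simeq \cat C(v, u^\vee \otimes -)$.

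Plugging the first of these isomorphisms into the defining double coend, applying Fubini (\Cref{Lem:Fubini}, in its $\kk$-linear version of \Cref{Rem:Yoneda-Fubini}) to perform the $u$-integral first, and then applying the $\kk$-linear co-Yoneda \eqref{eq:kcoYoneda} to the functor $M(-)\colon \cat C\to \kk\MMod$ evaluated at $v^\vee \otimes -$, we compute
\begin{align*}
M \otimes_\cat C N
 & \simeq \int^{v} \int^{u} \cat C(u, v^\vee \otimes -) \otimes_\kk M(u) \otimes_\kk N(v) \\
 & \simeq \int^{v} M(v^\vee \otimes -) \otimes_\kk N(v),
\end{align*}
which is the first claimed formula. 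The second formula is obtained identically, but performing the $v$-integral first after using the other rigidity isomorphism $\cat C(u\otimes v, -) \simeq \cat C(v, u^\vee \otimes -)$; the co-Yoneda \eqref{eq:kcoYoneda} then collapses the inner coend against $N$ and yields $N(u^\vee \otimes -)$.

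There is no real obstacle here: the argument is a standard `coend calculus' manipulation, and the only point requiring a small amount of care is checking that the chain of natural isomorphisms (in the variable suppressed by the dash, and in~$u$ and~$v$ for Fubini to apply cleanly) is natural enough to commute with coends — but this is automatic since all isomorphisms involved, being instances of rigidity adjunctions, symmetry, and Yoneda, are natural in all their variables.
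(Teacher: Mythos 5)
Your proof is correct and takes essentially the same route as the paper: expand the Day convolution as the defining double coend, apply the rigidity adjunction $\cat C(u\otimes v,-)\simeq\cat C(u,v^\vee\otimes-)$, use Fubini to isolate the $u$-integral, and collapse it by co-Yoneda. The paper additionally records explicitly the intermediate step of pulling $N(v)$ out of the inner coend (using that $\otimes_\kk$ preserves colimits in each variable) and traces a generator $[f,m,n]_{u,v}\mapsto [M(\tilde f)(m),n]_v$ through the chain, which it needs later; but these are presentational refinements, not missing content in your argument.
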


\begin{proof}
At each $c\in \cat C$, we define the first isomorphism to be the following composite:
\begin{align*}
(M\otimes_\cat C N)(c) & = \int^{u,v\in \cat C} \cat C(u\otimes v, c) \otimes_\kk M(u) \otimes_\kk N(v) && \eqref{Cons:Day-convo}\\
 & \simeq \int^{u,v\in \cat C} \cat C(u,v^\vee \otimes c) \otimes_\kk M(u) \otimes_\kk N(v) && \cat C \textrm{ rigid}\\
  & \simeq \int^{v\in \cat C} \int^{u\in \cat C} \cat C(u,v^\vee \otimes c) \otimes_\kk M(u) \otimes_\kk N(v) && \textrm{Fubini }\ref{Lem:Fubini}\\
   & \simeq \int^{v\in \cat C} \left( \int^{u\in \cat C} \cat C(u,v^\vee \otimes c) \otimes_\kk M(u) \right) \otimes_\kk N(v)   &&  && \\
    & \simeq \int^{v\in \cat C} M(v^\vee \otimes c) \otimes_\kk N(v) && \textrm{co-Yoneda }\ref{eq:kcoYoneda}
\end{align*}
The second-to-last isomorphism uses that $\otimes_\kk$ preserves colimits of $\kk$-modules in both variables. 
The second formula is proved similarly and will not be needed. 

If we trace a generator 
$[f \colon u\otimes v \to c, m, n]_{u,v} \in (M\otimes_\cat C N)(c)$ all the way, we see that it corresponds to $[M(\tilde f)(m),n]_v$
where $\tilde f$ is the map 
\[ 
\xymatrix@1{
 u \simeq u \otimes \unit \ar[r] & u \otimes v \otimes v^\vee \ar[r]^-{f \otimes \id} & c \otimes v^\vee \simeq v^\vee \otimes c
 }
\]
 which also uses the symmetry of~$\cat C$.
\end{proof}

\begin{Cons}[Lax right adjoints and projection map]
\label{Cons:lax-proj}
Let us again consider a general adjunction $L \colon \cat A \rightleftarrows \cat B \,:\! R$ with unit $\eta$ and counit~$\varepsilon$. 
Suppose that $\cat A$ and $\cat B$ are tensor categories and that the left adjoint $L$ is a tensor functor. The right adjoint $R$ inherits from $L$ the structure of a \emph{lax} tensor functor, that is, an (`external') multiplication~$\lambda_{Y,Y'}$
\[
\xymatrix@1{
R(Y) \otimes R(Y') \ar@{-->}[r]^-{\lambda_{Y,Y'}} \ar[d]_-{\eta} & R(Y \otimes Y') \\
RL(R(Y) \otimes R(Y')) \ar[r]^-\sim & R(LR(Y) \otimes LR (Y')) \ar[u]_-{R(\varepsilon \otimes \varepsilon)} 
}
\]
(for all $Y,Y'\in \cat B$) and a unit 
\[
\iota \colon 
\xymatrix{ \unit_\cat A \ar[r]^-{\eta} & RL(\unit_\cat A) \ar[r]^-{\sim} & R(\unit_\cat B)  }
\]
satisfying the same coherence constraints as for a tensor functor (\ie the `strong' case, when $\lambda$ and $\iota$ are invertible).

As all lax tensor functors, $R$ preserves monoids: If $Y=(Y,m,u)$ is a monoid in~$\cat B$, then $R(Y)$ inherits a monoid structure with multiplication and unit
\[
\xymatrix{ RY \otimes RY \ar[r]^-{\lambda_{Y,Y}} & R(Y \otimes Y) \ar[r]^-{Rm} & RY }
\quad \textrm{ and } \quad
\xymatrix{ \unit \ar[r]^-{\iota} & R\unit \ar[r]^-{Ru} & RY } .
\]
By applying this to the unique monoid structure $(\unit, m\colon \unit \otimes \unit \overset{\sim}{\to} \unit, \id_\unit)$ on the tensor unit of~$\cat B$, we obtain a distinguished commutative monoid $A:= R(\unit)$ in~$\cat A$.

The lax structure on $R$ also produces the \emph{projection map}
\begin{equation} \label{eq:proj-map}
\pi_{Y,X} \colon
\xymatrix@1{
R(Y) \otimes X \ar[r]^-{\id\otimes \eta} & R(Y) \otimes RL(X) \ar[r]^-{\lambda_{Y, LX}} & R( Y \otimes L(X))
}
\end{equation}
(for all $X\in \cat A$ and $Y\in \cat B$) which in many contexts, but not always, is an isomorphism called \emph{projection formula}.
\end{Cons}

\begin{Lem}[{\cite[Lem.\,2.8]{BalmerDellAmbrogioSanders15}}]
\label{Lem:monads-morph}
In the situation of \Cref{Cons:lax-proj}, the map 
\[
\pi \colon 
\xymatrix{ 
A \otimes X \ar[r]^-{\pi_{\unit,X} } &
  R(L \unit \otimes L X ) \simeq R (\unit \otimes L X ) \simeq R L (X) 
}
\quad\quad (\textrm{for }X\in \cat A) 
\]
is always a morphism of monads on~$\cat A$, between the monad obtained from the monoid $A=R(\unit_\cat B)$ and the monad $\bbA = RL$ obtained from the adjunction $L\dashv R$.
\qed
\end{Lem}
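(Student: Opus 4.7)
The plan is to verify the three defining conditions of a morphism of monads for the natural transformation $\pi \colon A\otimes(-) \Rightarrow RL$ obtained from \eqref{eq:proj-map} at $Y=\unit_{\cat B}$ (after identifying $L\unit_\cat A \simeq \unit_\cat B$ and $\unit_\cat B \otimes LX\simeq LX$ via the strong monoidal structure of $L$ and the unitor of~$\cat B$). Explicitly,
\[
\pi_X \;=\; \bigl(\,A\otimes X \xra{\id\otimes \eta_X} A\otimes RLX \xra{\lambda_{\unit,LX}} R(\unit\otimes LX) \isoto RL(X)\,\bigr).
\]
Naturality of $\pi$ in $X$ is automatic, since every ingredient ($\eta$, $\lambda$, and the unitor) is natural.

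For the \emph{unit} axiom we must show $\pi_X\circ(u\otimes \id_X)\circ \ell_X^{-1}=\eta_X$, where $u=\iota\colon \unit_\cat A\to R(\unit_\cat B)=A$ is the unit of the monoid~$A$ built in \Cref{Cons:lax-proj}. Unwinding the definitions of $\pi$ and $\iota$, the required identity becomes a diagram involving $\eta$, $\lambda_{\unit,LX}$, $\iota$ and the unitors; it reduces to the \emph{unit coherence axiom} of the lax monoidal structure $(\lambda,\iota)$ on~$R$, itself a formal consequence of the monoidal structure on $L$ together with the triangular identity $R\varepsilon_{LX}\circ \eta_{RLX}=\id_{RLX}$.

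For the \emph{multiplication} axiom we must verify the commutativity of
\[
\xymatrix@R=2.2em@C=3em{
A\otimes A\otimes X \ar[r]^-{\pi_{A\otimes X}} \ar[d]_-{m\otimes \id_X} & RL(A\otimes X) \ar[r]^-{RL(\pi_X)} & RLRL(X) \ar[d]^-{R\varepsilon_{LX}} \\
A\otimes X \ar[rr]_-{\pi_X} && RL(X)
}
\]
where $m=R(m_\unit)\circ \lambda_{\unit,\unit}$ and $m_\unit\colon \unit\otimes\unit\isoto \unit$ is the canonical iso in $\cat B$ (the top row is $\pi\pi$ at $X$, that is $RL(\pi_X)\circ \pi_{A\otimes X}$). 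Expanding all three instances of $\pi$ via the formula above and $m$ as just written, the diagram decomposes into sub-regions of three kinds: \textbf{(i)} an instance of the \emph{associativity coherence} for $\lambda$ (gluing $\lambda_{\unit,\unit}$ with $\lambda_{\unit\otimes\unit,LX}$ versus $\lambda_{\unit,LX}$ with $\lambda_{\unit,\unit\otimes LX}$); \textbf{(ii)} naturality squares for $\lambda$ and $\eta$ that reshuffle $\eta_X$ past $\lambda$ and past $L$ applied to components; and \textbf{(iii)} one collapse of the composite $R\varepsilon_{LX}\circ RL\eta_X = \id_{RLX}$ via a triangular identity of $L\dashv R$. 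Pasting these together yields exactly $\pi_X\circ(m\otimes \id_X)$ along the bottom route.

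The main obstacle is purely bureaucratic: the diagram is large and one must keep track of several unitor isomorphisms. No hypothesis beyond the formal axioms of an adjunction between monoidal categories with a strong monoidal left adjoint is required, and the complete diagram chase is carried out in \cite[Lem.\,2.8]{BalmerDellAmbrogioSanders15}, to which we defer for the bookkeeping details.
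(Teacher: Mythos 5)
Your proposal is correct and agrees with the paper, which gives no proof of its own but simply cites~\cite[Lem.\,2.8]{BalmerDellAmbrogioSanders15} (the \(\qed\) after the statement marks it as proved by reference). Your outline of the unit and multiplication checks — reducing them to the lax monoidal coherence of~\((\lambda,\iota)\), naturality squares, and the triangular identities of the adjunction — is the standard diagram chase carried out in that reference, so deferring the bookkeeping there, as you do, is exactly what the paper does.
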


\begin{Exa}
\label{Exa:all-together}
Consider a $\kk$-linear functor $F\colon \cat C\to \cat D$ and the induced adjunction $F_! \dashv F^*$  as in \Cref{Cons:Lan-Ran-Rep}.
Suppose now that $\cat C, \cat D$ are $\kk$-linear tensor categories and that $F$ is a tensor functor. It is a well-known general fact that the left adjoint $F_!\colon \Rep \cat C\to \Rep \cat D$ is naturally a tensor functor with respect to the Day convolution products (see \eg \cite[Prop.\,1.3.5]{Huglo19pp}). 
Everything in \Cref{Cons:lax-proj} can be applied to $L:=F_!\dashv F^*=:R$. In particular $F^*$ is a lax tensor functor. 
For future reference,  its structure maps are given at each $c\in \cat C$ by
\[\iota_c \colon 
\xymatrix@1{
(\unit_{\Rep \cat C})(c) 
= \cat C(\unit, c)
\ar[r]^-F &
\cat D (F \unit , Fc) 
\simeq
\cat D(\unit , Fc)
= (F^* \unit_{\Rep \cat D} )(c)
}
\]
and (in the rigid case, the only one we will need) by $[n,n']_u \mapsto [n,n']_{v=Fu}$
\begin{align} \label{eq:laxF*}
\lambda_{N,N',c}\colon \big( F^* N \otimes_\cat C F^* N'  \big) (c)
&\overset{\eqref{Lem:rigid-Day}}{=} \int^{u\in \cat C} N(Fu^\vee \otimes Fc)) \otimes_\kk N'F(u) \\
&\longrightarrow
\int^{v\in \cat D} N(v^\vee \otimes Fc) \otimes_\kk N'(v) 
\overset{\eqref{Lem:rigid-Day}}{=} F^* (N \otimes_\cat D N') (c) \nonumber
\end{align}
for all $N,N'\in \Rep \cat D$ (see \cite[Cor.\,1.3.6]{Huglo19pp}).
\end{Exa}

\begin{Prop}[Projection formula] \label{Prop:proj-formula}
Let $F\colon \cat C\to \cat D$ be a $\kk$-linear tensor functor between essentially small rigid $\kk$-linear tensor categories $\cat C $ and~$\cat D$. 
Consider the induced adjunction $L=F_! \colon \Rep \cat C \rightleftarrows \Rep \cat D \,:\!  F^*=R$ with its tensor structure as in \Cref{Exa:all-together}. Then the projection formula holds, that is the canonical map
\[
\pi_{N,M}\colon F^*(N) \otimes_\cat C M \overset{\sim}{\longrightarrow} F^* \big( N \otimes_\cat D F_! (M) \big)
\]
of~\eqref{eq:proj-map} is an isomorphism for all $M\in \Rep \cat C$ and $N\in \Rep \cat D$.
\end{Prop}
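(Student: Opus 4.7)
The plan is to evaluate both sides of $\pi_{N,M}$ at an object $c\in\cat C$, expand them as explicit coends using Lemma~\ref{Lem:rigid-Day} and the formula for $F_!$ in Construction~\ref{Cons:Lan-Ran-Rep}, and then construct a two-sided inverse directly on generators. Since $F$ is strong monoidal, $F(u^\vee)\simeq (Fu)^\vee$ canonically, so the left-hand side unfolds as
\[
(F^*N\otimes_\cat C M)(c)\simeq \int^{u\in\cat C} N\bigl((Fu)^\vee\otimes Fc\bigr)\otimes_\kk M(u),
\]
while applying Lemma~\ref{Lem:rigid-Day} inside $\cat D$ and then expanding $F_!M(v)$ as a Kan-extension coend gives
\[
F^*(N\otimes_\cat D F_!M)(c)\simeq \int^{v\in\cat D}\!\int^{x\in\cat C} N(v^\vee\otimes Fc)\otimes_\kk\cat D(Fx,v)\otimes_\kk M(x).
\]

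Next I would trace $\pi_{N,M}=\lambda_{N,F_!M}\circ(\id\otimes\eta_M)$ on coend generators. The unit of the adjunction $F_!\dashv F^*$ sends $m\in M(u)$ to the class $[\id_{Fu},m]_u\in F_!M(Fu)$, and the lax structure $\lambda$ is the reindexing~\eqref{eq:laxF*}; composing these, the map acts by
\[
\pi_{N,M}\colon [n,m]_u\,\longmapsto\,[n,\,\id_{Fu},\,m]_{v=Fu,\,x=u}.
\]

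Then I would propose the candidate inverse
\[
\psi_{N,M}\colon [n,\beta,m]_{v,x}\,\longmapsto\,[N(\beta^\vee\otimes\id_{Fc})(n),\,m]_x,
\]
for $\beta\in\cat D(Fx,v)$. Well-definedness is a short check using dinaturality in $v$ (for morphisms in $\cat D$) and in $x$ (for morphisms in $\cat C$, using strong monoidality of $F$ once more). Finally, $\psi_{N,M}\circ\pi_{N,M}=\id$ by inspection, while $\pi_{N,M}\circ\psi_{N,M}$ sends $[n,\beta,m]_{v,x}$ to $[N(\beta^\vee\otimes\id)(n),\id_{Fx},m]_{Fx,x}$, which equals $[n,\beta,m]_{v,x}$ by dinaturality in $v$ applied to $\beta\colon Fx\to v$.

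The only real obstacle is the concrete identification of $\pi_{N,M}$ on generators from its abstract two-step definition via~\eqref{eq:proj-map}; once this is pinned down, the rigid Day formula, co-Yoneda and dinaturality combine to make the verification that $\psi_{N,M}$ is an inverse essentially automatic.
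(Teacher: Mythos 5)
Your proposal is correct and takes essentially the same approach as the paper: both unfold the two sides as explicit coends via Lemma~\ref{Lem:rigid-Day} and the Kan-extension formula for~$F_!$, trace $\pi_{N,M}$ on generators, and reduce the verification to co-Yoneda/dinaturality manipulations of coend representatives. The only organizational difference is that the paper builds the isomorphism as a chain of standard coend isomorphisms (co-Yoneda, Fubini, rigidity) and then checks that the composite agrees with the canonical $\pi_{N,M}$, whereas you compute $\pi_{N,M}$ directly on generators and exhibit the explicit two-sided inverse $\psi_{N,M}$; the inverse formula you write is exactly what the paper's chain yields when run backward.
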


\begin{proof}
For every object $c\in \cat C$, we compute as follows (explanations below):
\begin{align*}
(F^*N \otimes_\cat C M)(c)  
& \;=\; \int^{v\in \cat C} N(F(v^\vee \otimes c)) \otimes M(v)&& \text{by \eqref{Lem:rigid-Day}} \\
    & \;\simeq\; \int^{v\in \cat C} N(Fv^\vee \otimes Fc) \otimes M(v) &&  \\
      & \;\simeq\; \int^{v\in \cat C}  \left( \int^{d\in \cat D} \cat D(d^\vee, Fv^\vee) \otimes  N(d^\vee \otimes Fc) \right) \otimes M(v) && \textrm{co-Yoneda}\\
        & \;\simeq\; \int^{v\in \cat C}  \left( \int^{d \in \cat D} \cat D(Fv, d) \otimes  N(d^\vee \otimes Fc) \right) \otimes M(v) &&  \\
          & \;\simeq\; \int^{d\in \cat D} N(d^\vee \otimes Fc) \otimes  \left( \int^{v\in \cat C} \cat D(Fv, d) \otimes M(v) \right) && \textrm{Fubini} \\
            & \;= \; F^* \big( N \otimes_\cat D F_! M \big)(c) && \text{by \eqref{Lem:rigid-Day}} 
\end{align*}
This successively uses: 
\Cref{Lem:rigid-Day} for the Day convolution over~$\cat C$; 
the tensor structure of~$F$; 
the co-Yoneda isomorphism \eqref{eq:kcoYoneda} (applied to the functor $N((-)^\vee \otimes Fc)\colon \cat D^\op \simeq \cat D \to \kk\MMod$ to compute the value $N(Fv^\vee \otimes Fc)$, exploiting the fact that $d\mapsto d^\vee$ is a self-inverse equivalence $\cat D^\op \simeq \cat D$);  
again the equivalence $(-)^\vee\colon \cat D^\op\simeq \cat D$; 
the Fubini \Cref{Lem:Fubini} to exchange the two coends; and finally, the definition of $F_!$ and \Cref{Lem:rigid-Day} for the Day convolution over~$\cat D$. 

If we trace the fate of a generator $[n,m]_v \in (F^*N \otimes M)(c)$ all the way, for any $n\in N(F(v^\vee \otimes c))$, $m\in M(v)$ and $v\in \cat C$, we see that it maps to 
$[\tilde n , [\id_{Fv}, m]_v ]_{d=Fv}$, where $\tilde n\in N(Fv^\vee \otimes Fv)$ is the element matching~$n$ under $Fv^\vee \otimes Fc \simeq F(v^\vee \otimes c)$.
This is easily seen to agree with the value of $[n,m]_v$ under the canonical map~$\pi_{N,M}$, again computed by a direct inspection of the definitions. 
Thus $\pi_{N,M}$ is invertible.
\end{proof}

\begin{Cor} \label{Cor:mod-vs-mod}
If the tensor categories $\cat C$ and $\cat D$ are rigid, the canonical morphism between monads on $\Rep \cat C$
\[ \pi\colon A\otimes(-) := F^*(\unit_{\Rep \cat D})\otimes_\cat C (-) \;\overset{\sim}{\longrightarrow}\; F^*F_! =: \bbA
\] 
 is an isomorphism. In particular, it induces by precomposition an isomorphism 
\[
\pi^*\colon \bbA \MMod_{\Rep \cat C} \overset{\sim}{\longrightarrow} A\MMod_{\Rep \cat C}
\]
of their module categories, identifying the two Eilenberg-Moore adjunctions.
\end{Cor}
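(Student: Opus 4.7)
The plan is to obtain both assertions essentially for free by combining the two preceding results, with the isomorphism of Eilenberg-Moore categories being a formal consequence of having an isomorphism of monads.

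First I would check that $\pi$ is an isomorphism of functors. This is precisely the content of the projection formula \Cref{Prop:proj-formula} specialized to $N = \unit_{\Rep \cat D}$: it gives an isomorphism
\[
\pi_{\unit, M} \colon F^*(\unit_{\Rep \cat D}) \otimes_\cat C M \overset{\sim}{\longrightarrow} F^*\big(\unit_{\Rep \cat D} \otimes_\cat D F_!(M)\big) \simeq F^*F_!(M) \,,
\]
and unfolding~\eqref{eq:proj-map} together with the definition in \Cref{Lem:monads-morph} shows this is precisely the map $\pi\colon A \otimes M \to \bbA M$ of the statement. So $\pi$ is pointwise invertible, hence an isomorphism of endofunctors of $\Rep \cat C$.

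Next, \Cref{Lem:monads-morph} already tells us that $\pi$ is a morphism of monads, from the monad $A \otimes (-)$ associated to the commutative monoid~$A$ to the monad $\bbA = F^*F_!$ associated to the adjunction $F_! \dashv F^*$. Being simultaneously a morphism of monads and an isomorphism of underlying endofunctors, $\pi$ is an isomorphism of monads on~$\Rep \cat C$.

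Finally, I would invoke the standard fact that any isomorphism of monads $\pi\colon \bbA_1 \overset{\sim}{\to} \bbA_2$ on a category $\cat A$ induces an equivalence $\pi^* \colon \bbA_2\MMod_\cat A \overset{\sim}{\to} \bbA_1\MMod_\cat A$ by sending $(M, \rho)$ to $(M, \rho \circ \pi_M)$ and acting as the identity on morphisms; the compatibility of $\pi$ with multiplications and units is exactly what makes $\rho \circ \pi_M$ into a valid action. Specialized to our $\pi$, this produces the asserted $\pi^*$. The claim that the two Eilenberg-Moore adjunctions are thereby identified is immediate: the forgetful functors $U_\bbA$ and $U_A$ both forget the action and so agree under~$\pi^*$, and the free functors $F_\bbA$ and $F_A$ send an object $M$ to $\bbA M$ and $A \otimes M$ respectively, matched via~$\pi_M$. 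There is no real obstacle here; the work has all been done in \Cref{Prop:proj-formula} and \Cref{Lem:monads-morph}.
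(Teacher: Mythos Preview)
Your proposal is correct and follows essentially the same approach as the paper: combine \Cref{Lem:monads-morph} (which shows $\pi$ is a morphism of monads) with \Cref{Prop:proj-formula} specialized to $N=\unit_{\Rep\cat D}$ (which shows $\pi$ is invertible), and then observe that an isomorphism of monads induces the isomorphism $\pi^*$ of module categories via $(M,\rho)\mapsto(M,\rho\circ\pi)$. The paper's proof is a terse two-line version of exactly this argument.
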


\begin{proof}
Immediate from \Cref{Lem:monads-morph} and \Cref{Prop:proj-formula}. 
Concretely, the isomorphism $\pi^*$ sends an $\bbA$-module $(M,\rho)$ to the $A$-module $(M,\rho \pi)$.
\end{proof}

\begin{Rem}
The special case of the projection formula used in \Cref{Cor:mod-vs-mod} is actually easy to see directly, since the relevant map~$\pi$
\[
A \otimes_\cat C M = \int^{c\in \cat C} \cat D(\unit , F (c^\vee \otimes - )) \otimes M(c) 
\overset{\sim}{\longrightarrow}
\int^{c\in \cat C} \cat D(Fc, F-) \otimes M(c)
= F^* F_! M
\]
is just  induced by $\cat D(\unit , F (c^\vee \otimes - ))\simeq \cat D(\unit , Fc^\vee \otimes F- ) \simeq \cat D(Fc, F-)$, the isomorphisms given by the tensor structure of $F$ and the tensor duality of~$\cat D$.
\end{Rem}

\begin{Lem} 
\label{Lem:fullF}
If $F\colon \cat C \to \cat D$ is essentially surjective and full, then $F^*$ is a fully faithful embedding $\Rep \cat D\hookrightarrow \Rep \cat C$.
An $M\colon \cat C\to \kk\MMod$ belongs to the (essential) image of $F^*$ if and only if it factors (up to isomorphism) through~$F$, uniquely if so.
\end{Lem}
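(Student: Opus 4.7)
The plan is to establish the fully faithfulness of $F^*$ by directly constructing, for each pair $N,N'\in \Rep \cat D$ and each natural transformation $\varphi\colon F^*N \to F^*N'$, a unique natural transformation $\psi\colon N \to N'$ with $F^*\psi = \varphi$. Faithfulness is easy and essentially repeats the argument already used in the proof of \Cref{Prop:plain-monadicity}: if $\psi,\psi'\colon N\to N'$ satisfy $F^*\psi = F^*\psi'$, pick for each $d\in \cat D$ an isomorphism $\alpha\colon Fc\isoto d$ (possible by essential surjectivity of $F$) and use naturality of $\psi$ and $\psi'$ at $\alpha$ to conclude $\psi_d = N'(\alpha)\circ \psi_{Fc}\circ N(\alpha)\inv = N'(\alpha)\circ \psi'_{Fc}\circ N(\alpha)\inv = \psi'_d$.

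For fullness, given $\varphi$, I choose for each $d\in \cat D$ some $c\in \cat C$ and an isomorphism $\alpha_d\colon Fc\isoto d$, and set $\psi_d:= N'(\alpha_d)\circ \varphi_c \circ N(\alpha_d)\inv$. The main work is to verify two things, both of which use fullness of~$F$ in a crucial way. First, independence of the choice $(c,\alpha_d)$: given two choices $(c_i,\alpha_i)$ ($i=1,2$), the isomorphism $\alpha_2\inv \alpha_1\colon Fc_1\isoto Fc_2$ lifts by fullness to some $g\colon c_1\to c_2$ in $\cat C$, and then naturality of the given $\varphi$ at $g$ yields the equality $N'(\alpha_1)\varphi_{c_1}N(\alpha_1)\inv = N'(\alpha_2)\varphi_{c_2}N(\alpha_2)\inv$. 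Second, naturality of $\psi$: for each $g\colon d\to d'$ in $\cat D$ with chosen $c,c',\alpha,\alpha'$, the morphism $(\alpha')\inv \circ g\circ \alpha\colon Fc\to Fc'$ lifts through $F$ to some $f\colon c\to c'$, and naturality of $\varphi$ at $f$ translates, after conjugating by $\alpha$ and $\alpha'$, into the desired identity $\psi_{d'}\circ N(g) = N'(g)\circ \psi_d$. By construction $F^*\psi = \varphi$ (taking $d=Fc$ with $\alpha=\id$), so $\psi$ is the required lift.

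The image characterization is then formal: by the very definition of the essential image, $M$ lies in $\Img(F^*)$ iff there exists $N\in \Rep \cat D$ with $M\simeq F^*N=N\circ F$, which is precisely what it means for $M$ to factor up to isomorphism through~$F$; and uniqueness of $N$ up to isomorphism follows immediately from the fully-faithfulness of $F^*$ just established, since $F^*N\simeq M \simeq F^*N'$ forces $N\simeq N'$. The only genuinely subtle step is the one in the middle paragraph: ensuring that the pointwise extension $\psi_d$ is simultaneously well-defined and natural. Both verifications hinge on the same idea, namely that fullness lets us lift arbitrary morphisms between objects of the form $Fc$ back into $\cat C$, where the naturality of the given $\varphi$ can be invoked.
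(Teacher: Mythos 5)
Your proof is correct and complete. The paper itself does not spell out a proof for this lemma: it merely remarks that the statement is well-known and points to \cite[Prop.\,1.3.2]{Huglo19pp} for details. Your argument is exactly the standard one (extend $\varphi$ by conjugating with chosen isomorphisms $\alpha_d\colon Fc\isoto d$, then use fullness to lift morphisms of $\cat D$ back to $\cat C$ in order to verify well-definedness and naturality), so it almost certainly coincides with the proof in the cited reference.

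One tiny point you may want to make explicit if you write this out in full: when you verify $F^*\psi = \varphi$ by ``taking $d=Fc$ with $\alpha = \id$'', you are implicitly relying on the well-definedness just established, since the fixed choice $(c_{Fc},\alpha_{Fc})$ you made at the outset need not literally be $(c,\id_{Fc})$. This is of course harmless given your independence-of-choice argument, but it is worth a word.
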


\begin{proof}
This is well-known, see \eg \cite[Prop.\,1.3.2]{Huglo19pp} for a detailed proof.
\end{proof}

\begin{proof}[Proof of \Cref{Thm:main2-intro}]
We finally have at our disposal all the ingredients of the theorem and its proof. Let $F\colon \cat C\to \cat D$ be a $\kk$-linear tensor functor between essentially small $\kk$-linear rigid tensor categories, and consider all the constructions as listed in the theorem and recalled above. (Since $F$ is essentially surjective, if $\cat C$ is rigid then $\cat D$ is also automatically rigid, see \eg \cite[Prop.\,1.1.10]{Huglo19pp} for a full proof.)

Consider the composite functor
\begin{equation} 
\label{eq:E_A}
E_A\colon \xymatrix{
\Rep \cat D  \ar[r]^-{E_\bbA} & {\bbA \MMod_{\Rep \cat C}} \ar[r]^-{\pi^*}_-{\sim} & A \MMod_{\Rep \cat C}
}
\end{equation}
of the Eilenberg-Moore comparison functor $E_\bbA$ and the identification $\pi^*$ of the categories of $A$-modules with that of $\bbA$-modules, as in \Cref{Cor:mod-vs-mod} (this uses rigidity).
It sends $N\in \Rep \cat D$ to $F^*N\in \Rep \cat C$ equipped with the $A$-action 
\begin{equation*} 
\rho:= F^*(\varepsilon)\circ \pi \colon A \otimes_\cat C F^*N \simeq F^* F_! F^*N \longrightarrow F^*N .
\end{equation*}

If $F$ is essentially surjective, $E_\bbA$ is an equivalence by \Cref{Prop:plain-monadicity} and therefore so is~$E_A$; this proves part~(1) of the theorem. 
If moreover $F$ is full, the functors $F^*$ and $U$ are fully faithful by \Cref{Lem:fullF}; this proves a third of~(2).

Under all these hypotheses, the remaining claims of part~(2) on the tensor structures follow from \Cref{Prop:E-tensor-compat} below. 
This ends the proof of the theorem.
\end{proof}

\begin{Prop} \label{Prop:E-tensor-compat}
If $F\colon \cat C\to \cat D$ is full and essentially surjective and $\cat C$ and $\cat D$ are rigid, then the equivalence $E_A$ of \eqref{eq:E_A} is a (strong) tensor functor, and the embeddings $F^*$ and $U$ identify these tensor categories with  the full tensor ideal subcategory $\Img(F^*)= \Img(U)$ of~$\Rep \cat C$.
\end{Prop}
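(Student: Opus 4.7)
The plan is to reduce everything to the assertion that the lax monoidal structure $\lambda$ of the functor $F^*\colon \Rep\cat{D}\to\Rep\cat{C}$ is already an \emph{iso}morphism in the present setting; the strong monoidality of $E_A$ and the remaining claims will then follow by purely formal arguments. The crux is the identity
\[
\lambda_{N,N'} \;=\; F^*(\id_N \otimes \varepsilon_{N'}) \,\circ\, \pi_{N,\, F^*N'},
\]
where $\varepsilon\colon F_!F^* \Rightarrow \Id_{\Rep\cat{D}}$ is the counit of $F_!\dashv F^*$ and $\pi$ is the projection map of~\eqref{eq:proj-map}. I would verify this by a short diagram chase unfolding the definitions in~\Cref{Cons:lax-proj}, using the naturality of $\lambda$ in its second variable together with the triangle identity $F^*\varepsilon \circ \eta F^* = \id$.

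Granted this identity, two facts already at our disposal combine to prove that $\lambda$ is invertible. On the one hand, since $F$ is full and essentially surjective, \Cref{Lem:fullF} implies that $F^*$ is fully faithful, whence the counit $\varepsilon$ is an isomorphism. On the other hand, by rigidity of $\cat{C}$ and $\cat{D}$, the projection map $\pi$ is an isomorphism by \Cref{Prop:proj-formula}. Therefore $\lambda_{N,N'}$ is invertible for all $N,N'\in\Rep\cat{D}$.

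Strong monoidality of $E_A$ now follows almost for free. By construction, $E_A$ inherits from $F^*$ a lax monoidal structure whose multiplication map is the unique factorisation of $\lambda_{N,N'}$ through the coequalizer~\eqref{eq:equalizer-over-A} computing $\otimes_A$; the two parallel arrows in this coequalizer become equal after postcomposition with $\lambda_{N,N'}$ by the associativity axiom of the lax structure on~$F^*$. Since $\lambda$ is invertible, those two arrows must already be equal, the coequalizer trivialises, and we obtain natural isomorphisms
\[
E_A(N)\otimes_A E_A(N') \;\simeq\; F^*N \otimes_{\cat{C}} F^*N' \;\overset{\lambda}{\simeq}\; F^*(N\otimes_\cat{D} N') \;=\; E_A(N \otimes_\cat{D} N').
\]
Combined with part~(1) of \Cref{Thm:main2-intro}, this makes $E_A$ an equivalence of $\kk$-linear tensor categories.

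Finally, $F^*$ is fully faithful by \Cref{Lem:fullF} and $U = F^* \circ E_A^{-1}$ is fully faithful because $E_A$ is an equivalence; they therefore share the essential image $\Img(F^*)=\Img(U)\subseteq\Rep\cat{C}$. This image is a tensor ideal via the symmetric form of the projection formula: for every $M\in\Rep\cat{C}$ and $N\in\Rep\cat{D}$ one has $M \otimes_\cat{C} F^*N \simeq F^*N \otimes_\cat{C} M \simeq F^*(N \otimes_\cat{D} F_!M)\in \Img F^*$. The only substantive step in the whole argument is the opening diagram chase identifying~$\lambda$ with $F^*(\id \otimes \varepsilon)\circ \pi$; the rest is formal from the machinery already in place, so I foresee no serious obstacle.
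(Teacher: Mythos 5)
Your proof is correct and takes a genuinely different, more conceptual route to the central step. Both arguments hinge on showing that the lax multiplication $\lambda_{N,N'}$ of $F^*$ is invertible, but the paper does this by a hands-on construction of the inverse: writing $\lambda_{N,N'}([n,n']_u)=[n,n']_{Fu}$ explicitly via \Cref{Lem:rigid-Day}, choosing preimages $u_v\in F^{-1}(v)$, and using fullness of $F$ to lift the elementary coend relations (and surjectivity on objects to lift zig-zags). You instead prove the identity $\lambda_{N,N'} = F^*(\id_N\otimes\varepsilon_{N'})\circ\pi_{N,F^*N'}$ -- a clean consequence of naturality of $\lambda$ in its second variable plus the triangle identity $F^*\varepsilon\circ\eta F^*=\id$ -- and then deduce invertibility of $\lambda$ from two facts already proved: $\varepsilon$ is an isomorphism because $F^*$ is fully faithful (\Cref{Lem:fullF}), and $\pi$ is an isomorphism by the projection formula (\Cref{Prop:proj-formula}). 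This is shorter, avoids the representative-chasing in coends, and makes it transparent why both rigidity and fullness enter. Your tensor-ideal argument via the symmetric projection formula is also more direct than the paper's criterion in terms of invertibility of the unit $M\to A\otimes M$. What your write-up elides, and the paper spells out, is the check that the resulting comparison isomorphisms $\varphi_{N,N'}$ and the unit map $A\to E_A(\unit)$ actually live in $A\MMod_{\Rep\cat C}$: the paper disposes of this by observing that fullness and essential surjectivity of $F$ force every object of $\Rep\cat C$ to carry at most one $A$-module structure, so $A$-equivariance is automatic. You should include that remark (or an equivalent one) to fully close the argument.
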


\begin{proof}
For $N,N'\in \cat D$, let $E_A(N)=(F^*N, \rho)$ and $E_A(N')=(F^*N',\rho')$ denote the two images in $A\MMod$.
Recall, from \Cref{Cons:tensor-EM} and \Cref{Exa:all-together}, the coequalizer defining $-\otimes_A-$ and the lax multiplication $\lambda_{N,N'}$ of~$F^*$:
\[
\xymatrix{
F^*N \otimes_\cat C A \otimes_\cat C F^*N' \ar@<.5ex>[rr]^-{\id\otimes \rho'} \ar@<-.5ex>[rr]_-{\rho\sigma \otimes \id} &&
 F^*N \otimes_\cat C  F^*N' \ar[d]_-{\lambda_{N,N'}}  \ar[rr]^-\omega &&
  F^*N \otimes_A F^*N' \ar@{-->}[dll]^{\;\;\quad\quad\lambda_{N,N'} \omega^{-1} \;=:\; \varphi_{N,N'}}
  \\
 && F^*(N \otimes_\cat D N')  && 
}
\]
We claim that, under the hypotheses, $\lambda_{N,N'}$ is invertible and so is the canonical projection~$\omega$ to the coequalizer. In particular, we obtain the dotted isomorphisms~$\varphi_{N,N'}$. 
Indeed, writing as in~\eqref{eq:laxF*} (thanks to rigidity), $\lambda_{N,N'}$ sends $[n,n']_u$ to $[n,n']_{Fu}$, and the inverse map sends $[n,n']_v$ to $[n,n']_u$, for any choice of $u\in \cat C$ with $Fu\simeq v$. 
To see why the latter works, assume for simplicity that $F$ is surjective on objects (\ie replace $\cat D$ with the equivalent strict image of~$F$). 
Now choose $u_v\in F^{-1}(v)$ for each~$v\in \cat D$. 
The resulting map $(n,n')_v\mapsto (n,n')_{u_v}$, call it~$\theta$, is well-defined on the classes $[n,n']_v$; indeed, every map $\psi\in \cat D(v, \overline{v})$ testifying of an `elementary' relation $(n,n')_v \sim (\overline{n},\overline{n}')_{\overline{v}}$ lifts to some $\varphi\in \cat C(u_v, u_{\overline{v}})$  by the fullness of~$F$, showing $[n,n']_{u_v}= [\overline{n},\overline{n}']_{u_{\overline{v}}}$, and we may lift any zig-zag of such maps by the surjectivity of $F$ on objects. Clearly $\lambda_{N,N'}\circ \theta=\id$.
Moreover for every~$u \in \cat C$ we have $F(u) = F(u_{Fu})$, hence by the fullness of~$F$ we may lift $\id_{Fu}$ to some map $u\to u_{Fu}$ in~$\cat C$, which implies that $\theta \circ \lambda_{N,N'}=\id$ as well. Thus $\lambda_{N,N'}$ is invertible (and $\theta$ does not depend on the choices).

As for~$\omega$, it is a general fact, neither requiring rigidity nor the hypotheses on~$F$, that $\lambda$ factors through it (see \cite[Lemma 1.4.2]{Huglo19pp}); as $\lambda$ is  invertible and $\omega$ is always an epimorphism, we conclude that $\omega$ is also invertible. Alternatively, the latter can also be checked directly in the rigid case.

Note that this does \emph{not} mean that $F^*$ is a strong tensor functor, because the unit map 
$F\colon \cat C(\unit, -) \to \cat D (F \unit , F-)  \simeq \cat D(\unit , F-)$ 
is still not necessarily invertible. 
Still, the identity map $F^*(\unit_{\Rep \cat D}) \to U \circ E_A(\unit_{\Rep \cat D})$ \emph{is} invertible, and is also the unique $A$-linear morphism $A=F^*(\unit_{\Rep \cat D}) \to E_A(\unit_{\Rep \cat D})$ extending along the unit map $\unit_{\Rep \cat C}\to A$ of the monoid~$A$, as one checks immediately.
Moreover, the morphisms $\varphi_{N,N'}$ are automatically $A$-equivariant, since the fullness and essential surjectivity of $F$ imply that every $M\in \Rep \cat C$ can have at most a unique $A$-module structure (\cite[Cor.\,1.3.11]{Huglo19pp}).

Altogether, we see that the maps $\varphi_{N,N'}$ and the identity $A \to E_A(\unit)$ belong to $A\MMod_{\Rep \cat C}$ and equip $E_A$ with a strong tensor structure; the commutativity of the coherence diagrams follows from that for the lax structure of~$F^*$.

Finally, let us verify that $\Img(F^*)=\{M\in \Rep \cat C\mid \exists N \textrm{ s.t.\ } M\simeq F^*N\}$ is a tensor ideal. Notice that for all $M\in \Rep \cat C$
\begin{align*}
M\in \Img(F^*) &\;\Longleftrightarrow\;  \textrm{the unit }\eta\colon M\to F^*F_!M \textrm{ is invertible}\\
&\;\Longleftrightarrow\;  \textrm{the unit }M\simeq \unit \otimes M \to A \otimes M \textrm{ is invertible},
\end{align*}
the first equivalence because $F^*$ is the inclusion of a full reflexive subcategory, the second because the equivalence $E_A$ matches the two adjunctions.
We deduce for all $M\in \Img(F^*)$ and $N\in \Rep \cat C$ that $M\otimes N \simeq (A\otimes M)\otimes N\simeq A \otimes (M\otimes N)$, so that $M\otimes N\in \Img(U) = \Img(F^*)$. Thus $\Img(F^*)$ is a tensor ideal in $\Rep \cat C$.
\end{proof}

\begin{Rem}
A slightly weaker version of \Cref{Thm:main2-intro} is proved in \cite[\S1.4]{Huglo19pp} by way of more explicit calculations.
We do not know if the ridigity hypothesis is necessary for $E_A$ to be an equivalence, nor if the fullness hypothesis is necessary for $E_A$ to be a \emph{strong} tensor functor (it is always a lax one), as we have not looked for explicit counterexamples. 
On the other hand, we don't see any reason for the conclusions to hold otherwise, because the projection formula of \Cref{Prop:proj-formula}, in particular, may fail.
\end{Rem}

\section{Application: cohomological vs ordinary Mackey functors}
\label{sec:cohomack}%

In this section we derive \Cref{Cor:cohomack} from the above abstract results. 
Although this corollary is essentially well-known (see especially~\cite[\S10]{PanchadcharamStreet07}), the present proof offers some insight because it immediately clarifies the relation between ordinary and cohomological Mackey functors as tensor categories. Besides, it provides an easier and probably more familiar analogue of our main application, \Cref{Cor:bisetfun}.  

Throughout this section, we fix a finite group~$G$ and a commutative ring~$\kk$.

\begin{Rec}[The span category of $G$-sets]
\label{Rec:span}
Let $G\sset$ denote the category of finite left $G$-sets. Then there is a category $\spancat_\kk (G)$ whose objects are the same as those of $G\sset$, and where a morphism $X\to Y$ is by definition an element of the Grothendieck group $\kk \otimes_\mathbb Z K_0(G\sset / X\times Y)$  (with addition induced by coproducts) of the slice category $G\sset / X\times Y$. 
In particular, every morphism can be written as a finite $\kk$-linear combination of isomorphism classes $[\alpha,\beta]$ of spans $X \overset{\;\alpha}{\gets} S\overset{\beta}{\to} Y$ in $G\sset$, where two spans $X \overset{\;\alpha}{\gets} S\overset{\beta}{\to} Y$ and $X \overset{\;\alpha'}{\gets} S' \overset{\beta'}{\to} Y$ are \emph{isomorphic} if there exists an isomorphism $\varphi\colon  S\overset{\sim}{\to} S'$ making the following diagram of $G$-sets commute:
\[
\xymatrix@R=8pt@C=10pt{
&& S \ar[dd]_\varphi^\simeq \ar[dll]_\alpha \ar[drr]^\beta && \\
X &&  && Y \\
&& S' \ar[ull]^{\alpha'} \ar[urr]_{\beta'} &&
}
\]
A $\kk$-bilinear composition in $\spancat_\kk (G)$ is induced by taking pull-backs in~$G\sset$:
\begin{equation}
\label{eq:Sp(G)-comp}
\vcenter{
\xymatrix@R=11pt@C=12pt{
&& S\times_Y T  \ar[dl]^-{\tilde \gamma} \ar[dr]_-{\tilde \beta} \ar@/_3ex/[ddll]_-{\alpha \tilde \gamma} \ar@/^3ex/[ddrr]^-{\delta \tilde \beta}   &&  \\
& S \ar[dl]^-\alpha \ar[dr]_-\beta && T \ar[dl]^-\gamma \ar[dr]_-\delta & \\
X 
\ar@{..>}[rr] && Y \ar@{..>}[rr] && Z
}}
\end{equation}
It follows that $\spancat_\kk(G)$ is an essentially small $\kk$-linear category, where the sum of two spans is induced by taking coproducts at the middle object~$S$.
Moreover,  $\spancat_\kk(G)$ is a $\kk$-linear rigid tensor category, with tensor product $\otimes$ induced by the categorical product $X\times Y$ of $G\sset$ and with tensor unit $\unit = G/G$ the one-point $G$-set. 
Every $G$-set is actually its own tensor dual, by virtue of the natural equivalence
$G\sset / X\times Y \simeq G \sset / Y \times X$
of slice categories. See details \eg in \cite{Bouc97}.
\end{Rec}

\begin{Rec}[The category of permutation modules]
\label{Rec:perm}
We denote by $\Perm_\kk (G)$ the category of finitely generated \emph{permutation $\kk G$-modules}, that is, the full subcategory of those (left) $\kk G$-modules which admit a finite $G$-invariant $\kk$-basis. 
Clearly this is an essentially small $\kk$-linear category. It is moreover a $\kk$-linear tensor category, because it inherits the usual tensor product of $\kk G$-modules (\ie the tensor product $\otimes = \otimes_\kk$ over~$\kk$ endowed with diagonal $G$-action). Indeed, the trivial module $\unit = \kk$ is a permutation $\kk G$-module, and if the $\kk G$-modules $M$ and $N$ admit $G$-invariant bases $X\subset M $ and $ Y\subset N$, respectively, then $\{x\otimes y\mid (x,y)\in X\times Y\}$ is a $G$-invariant basis of $M\otimes N$. 
As tensor category, $\Perm_\kk (G)$ is rigid: if $M$ has invariant basis~$X$, its tensor-dual module $M^\vee = \Hom_\kk (M,\kk)$ has an invariant basis given by the usual $\kk$-linear dual basis $X^\vee := \{ x^\vee\colon y\mapsto \delta_{x,y} \mid x\in X\}$.
\end{Rec}

\begin{Def}[Mackey functors for~$G$]
\label{Def:MackcoMack}
The representation category
\[ \Mackey_\kk(G) := \Rep \spank (G) \]
 (\cf \Cref{Not:Funk}) is by definition the category of \emph{\textup($\kk$-linear\textup) Mackey functors for~$G$}. 
 Similarly, the category of \emph{cohomological} Mackey functors for~$G$ is
\[ \coMackey_\kk(G) := \Rep \Perm_\kk (G) \,. \]
Both are complete and co-complete abelian $\kk$-linear tensor categories, with tensor structure provided by Day convolution (\Cref{Cons:Day-convo}) extended from the rigid tensor structures on their respective source categories $\spank (G)$ and $\Perm_\kk(G)$, as described in \Cref{Rec:span} and \Cref{Rec:perm}.
\end{Def}

\begin{Rem} \label{Rem:box-prods}
Usually, the tensor product in $\Mackey_\kk(G)$ is denoted by $M \boxprod N$ and called \emph{box product};
and  $\coMackey_\kk(G)$ is defined as a full subcategory of $\Mackey_\kk(G)$ and only later identified (by Yoshida's theorem) with the above functor category.
\end{Rem}

\begin{Lem}[Yoshida's functor]
\label{Lem:Yo}
There is a well-defined $\kk$-linear tensor functor 
\[
\Yo \colon \spancat_\kk(G) \longrightarrow \Perm_\kk(G)
\]
sending a left $G$-set $X$ to the permutation $\kk G$-module~$\kk [X]$ and a span $X \overset{\;\alpha}{\gets} S\overset{\beta}{\to} Y$ of $G$-maps to the $\kk G$-linear map $\kk [X]\to \kk [Y]$, $x \mapsto \sum_{s \in \alpha^{-1}(x)} \beta(s)$.
Moreover, $\Yo$ is essentially surjective and full.
\end{Lem}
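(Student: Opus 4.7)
The proof divides naturally into three parts: well-definedness of $\Yo$ as a $\kk$-linear functor, its strong monoidal structure, and the two surjectivity properties. For well-definedness, the formula $x \mapsto \sum_{s\in \alpha^{-1}(x)}\beta(s)$ clearly depends only on the isomorphism class of the span $X \overset{\alpha}{\gets} S \overset{\beta}{\to} Y$, since any span-isomorphism $\varphi\colon S \xrightarrow{\sim} S'$ restricts to bijections $\alpha^{-1}(x) \to (\alpha')^{-1}(x)$ preserving $\beta$-values. The $G$-equivariance of $\alpha,\beta$ renders the associated map $\kk G$-linear, and $\kk$-linear extension across each Hom-module is automatic. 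For functoriality, I would trace through the pullback diagram \eqref{eq:Sp(G)-comp}: both $\Yo([\gamma,\delta])\circ \Yo([\alpha,\beta])$ and $\Yo([\alpha\tilde\gamma,\delta\tilde\beta])$ send $x\in X$ to $\sum_{(s,t)\,:\,\alpha(s)=x,\,\beta(s)=\gamma(t)}\delta(t)$, since the fiber of $\alpha\tilde\gamma$ over~$x$ is precisely this set of pairs.

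For the tensor structure, the canonical $\kk G$-linear isomorphisms $\kk[G/G]\cong \kk$ and $\kk[X\times Y]\cong \kk[X]\otimes_\kk \kk[Y]$, sending basis elements $(x,y) \mapsto x\otimes y$, are natural in $X,Y$ and compatible with the associators, unitors and symmetries of the tensor structures recorded in \Cref{Rec:span} and \Cref{Rec:perm}. These compatibilities, as well as the preservation of tensor products of morphisms, reduce to routine verifications on basis elements.

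Essential surjectivity is immediate from the very definition of $\Perm_\kk(G)$: every object is (isomorphic to) $\kk[X]$ for its invariant basis~$X$. For fullness, I would use the classical identification
\[
\Hom_{\kk G}\bigl(\kk[X],\kk[Y]\bigr) \;\cong\; \bigl(\kk[X]^\vee\otimes_\kk \kk[Y]\bigr)^G \;\cong\; \kk[X\times Y]^G,
\]
where the second step exploits the self-duality of permutation modules via their invariant bases. The $\kk$-module $\kk[X\times Y]^G$ admits as $\kk$-basis the orbit sums $e_O := \sum_{(x,y)\in O}(x,y)$, indexed by the $G$-orbits $O\subseteq X\times Y$; and each $e_O$ is hit by the tautological span $X\overset{\pr_1}{\gets} O \overset{\pr_2}{\to} Y$ induced by the inclusion $O\hookrightarrow X\times Y$, since $\Yo$ applied to it sends $x$ to $\sum_{y\,:\,(x,y)\in O} y$, which corresponds to $e_O$ under the above identification. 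As the $e_O$'s $\kk$-span the Hom-module, fullness follows.

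No individual step presents a serious difficulty. The longest verification is the functoriality computation, a slightly tedious but direct traversal of the pullback diagram; the only non-elementary input to the fullness argument is the standard self-duality $\kk[X]^\vee \cong \kk[X]$ of permutation modules together with the orbit-sum basis of their $G$-invariants, both of which are entirely classical.
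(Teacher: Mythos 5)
Your proof is correct, and on two of the three main points it takes a genuinely different (and more self-contained) route than the paper's.

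For well-definedness and functoriality, the paper deliberately avoids the direct computation, instead factoring $\Yo$ through the universal property of the span category: it introduces a covariant $\Yo_\star$ and a contravariant $\Yo^\star$ on $G\sset$, checks the Beck--Chevalley relation $\Yo^\star(\gamma)\Yo_\star(\beta)=\Yo_\star(\tilde\beta)\Yo^\star(\tilde\gamma)$ for pullbacks, and invokes Lindner's theorem to get a functor on $\spancat(G)$, which it then $\kk$-linearizes. The paper explicitly notes that one could also ``verify that $\Yo$ is well-defined by straightforward computations,'' which is precisely what you do by tracing a basis element $x$ through the pullback square \eqref{eq:Sp(G)-comp}. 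Both arguments are sound; the paper's is better positioned to generalize (it isolates the Beck--Chevalley condition, which is the conceptual content), while yours is shorter and self-contained.

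For fullness, the paper reduces to a pair of standard orbits $X=G/H$, $Y=G/K$ and cites Yoshida's double-coset isomorphism $\kk[H\backslash G/K]\xrightarrow{\sim}\Hom_{\kk G}(\kk[G/H],\kk[G/K])$, deferring the remaining bookkeeping to a reference. Your argument is cleaner and requires no reduction: you identify $\Hom_{\kk G}(\kk[X],\kk[Y])\cong\kk[X\times Y]^G$, observe the orbit sums $e_O$ form a $\kk$-basis, and exhibit each $e_O$ as $\Yo$ of the span $X\gets O\to Y$ given by the two projections restricted to the orbit $O$. This specializes to Yoshida's formula when $X,Y$ are orbits (the $G$-orbits on $G/H\times G/K$ being the double cosets), so the two arguments have the same mathematical content, but yours makes the span realizing each basis element visibly canonical and dispenses with the citation. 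The tensor-structure and essential-surjectivity parts are handled identically in both treatments.
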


\begin{proof}
One can verify that $\Yo$ is well-defined by straightforward computations, but here is a more conceptual way to see it.
First note that there is a functor $\Yo_\star\colon G\sset\to \kk\MMod$ which sends $X$ to $\kk[X]$ and simply extends a $G$-map $\kk$-linearly.
There is also a functor $\Yo^\star\colon (G\sset)^\op \to \kk\MMod$ with the same object-map but sending a $G$-map $\alpha\colon Y\to X$ to the $\kk$-linear map $\kk[X]\to \kk[Y]$ such that $x \mapsto \sum_{y \in \alpha^{-1} (x)} y$ for $x\in X$. 
Since $\kk[ X\sqcup Y] \simeq \kk [X] \oplus \kk[Y]$, and since the identity 
\[ \Yo^\star(\gamma) \Yo_\star(\beta)= \Yo_\star(\tilde \beta)\Yo^\star(\tilde \gamma)\]
can be readily verified for an arbitrary pull-back square of $G$-sets (with notations as in~\eqref{eq:Sp(G)-comp}),
it follows by the universal property of the span category (\cite{Lindner76}; see also \cite[App.\,A.5]{BalmerDellAmbrogio20}) that there is a functor $\tilde{\Yo}\colon \spancat (G)\to \Perm_\kk(G)$ defined as $\tilde{ \Yo}(X)= \kk[X]$ on $G$-sets and  $\tilde{ \Yo} ([\alpha, \beta]) = \Yo_\star(\beta)\circ \Yo^\star(\alpha)$ on spans of $G$-maps. 
(Here $\spancat(G)$ is the `plain' span category, constructed as $\spancat_\kk(G)$ but with Hom sets simply given by the sets of isomorphism classes $\spancat(G)(X,Y)=(G\sset / X\times Y)/_\simeq$ of objects.) Our functor $\Yo$ is then the evident $\kk$-linear extension of~$\tilde{ \Yo}$. 

A permutation $\kk G$-module $M$ is precisely one for which there exists an isomorphism $M \simeq \kk[X]$ for some $G$-set~$X$, hence $\Yo$ is essentially surjective. 
It is a little harder to see that $\Yo$ is full, but it suffices to verify it for two standard orbits $X=G/H$ and $Y=G/K$, in which case it boils down to the $\kk$-linear isomorphism
\[
\kk [H \!\setminus\! G / K] \overset{\sim}{\longrightarrow} \Hom_{\kk G} ( \kk [G/H], \kk [G/K]) , \;\; HxK \mapsto \Big(gH \mapsto \sum_{[u] \in H/(H\cap {}^xK)} guxK\Big)
\]
as in \cite[Lemma~3.1]{Yoshida83b}; see \cite[Prop.\,2.1.10]{Huglo19pp} for the remaining details.
\end{proof}

\begin{proof}[Proof of \Cref{Cor:cohomack}]
As we have seen in Recollections~\ref{Rec:span} and~\ref{Rec:perm}, both $\cat C:= \spancat_\kk(G)$ and $\cat D:= \Perm_\kk (G)$ are essentially small rigid $\kk$-linear tensor categories.
Moreover, Yoshida's functor $F:=\Yo$ of \Cref{Lem:Yo} is a full and essentially surjective $\kk$-linear tensor functor between them.
Hence $F\colon \cat C\to \cat D$ satisfies all the hypotheses of \Cref{Thm:main2-intro}, from which we obtain most of the claims of the corollary.

It remains only to clarify two points:
\begin{enumerate}[(1)]
\item The monoid $A=F^*(\unit)$ of $\Rep \cat C=\Mackey_\kk(G)$ from the theorem is the \emph{fixed-point Mackey functor}~$\FPk$. 
\item A Mackey functor $M$ is cohomological if and only if it satisfies the cohomological relations $\mathrm{ind}_L^H \circ \mathrm{res}^H_L=[H:L] \, \id_{M(H)} $ for all subgroups $L\leq H\leq G$.
\end{enumerate}
Here for familiarity we have switched to the classical notations $M(H):= M(G/H)$, $\mathrm{ind}_L^H:= M([G/L = G/L \to G/H])$ and $\mathrm{res}^H_L:=M([G/H \gets G/L = G/L])$, where $G/L \to G/H$ is the quotient $G$-map for two nested subgroups $L\leq H\leq G$.

For~(1) recall that, classically, $\FPk$ is the Mackey functor which assigns to every orbit $G/H$ the trivial $\kk G$-module~$\kk$, whose restriction and conjugation maps are all identities, and whose induction maps $\mathrm{ind}_L^H\colon \kk \to \kk$ are given by multiplication by the index~$[H:L]$.
It is a matter of straightforward comparison to identify it with $A = \Perm_\kk(G)(\kk, \Yo(-))$ (if necessary, see details in \cite[Lemma 2.2.15]{Huglo19pp}).

For~(2), the easiest way to see this is as in \cite[Prop.\,16.3]{ThevenazWebb95} where, by very easy explicit calculations, it is shown that a Mackey module over the Green functor $\FPk$ is the same thing as a Mackey functor satisfying also the cohomological relations.
\end{proof}

\begin{Rem}
Note that (2) amounts to saying that the kernel (on maps) of Yoshida's functor $\Yo$ is generated as a $\kk$-linear categorical ideal of $\spank (G)$ by the span-versions of the cohomological relations, namely (after computing the trivial pull-back) by 
\[
[G/H \gets G/L \to G/H] - [H:L] \id_{G/H} \quad \textrm{ for all } L\leq H \leq G. 
\]
It is immediate to see that $\Yo$ kills these relations. To see that they actually generate the kernel of~$\Yo$, it suffices to inspect the standard presentation of $\spank (G)$ in terms of restriction, conjugation and induction maps; the necessary calculations are essentially a re-writing of the ones in \cite[Prop.\,16.3]{ThevenazWebb95} we mentioned above.
\end{Rem}

\section{The realization pseudo-functor}
\label{sec:R}%

This section is dedicated to proving \Cref{Thm:main1-intro}.

We retain the same standard notations and conventions for bicategories, 2-categories, pseudo-functors and allied notions as in \cite[App.\,A]{BalmerDellAmbrogio20} or \cite[\S2]{DellAmbrogio19pp}. 
We nonetheless provide here a few recollections for the reader's convenience.
We denote by $\gpd$ the 2-category ($=$ strict bicategory) of finite groupoids, functors between them and (necessarily invertible) natural transformations.

\begin{Ter} \label{Ter:squares}
Given two functors $S\overset{a}{\to} G \overset{b}{\gets} T$  between finite groupoids and with common target (a `cospan'), we can build its \emph{iso-comma} groupoid $a/b$, whose objects are triples $(s,t,\gamma)$ with $s\in \Obj S$, $t\in \Obj T$ and $\gamma \in G(a(s),b(t))$. 
A morphism $(s,t,\gamma)\to (s',t',\gamma')$ is a pair $(\varphi,\psi)$ with $\varphi \in S(s,s')$ and $\psi \in T(t,t')$ and such that $\gamma'a(\varphi)=b(\psi)\gamma$.
It is part of the \emph{iso-comma square}
\[
\vcenter{
\xymatrix@C=12pt@R=12pt{
& (a/b) \ar[dl]_p \ar[dr]^q \ar@{}[dd]|{\underset{\sim}{\Ivocell{\gamma}}} & \\
S \ar[dr]_a &  & T \ar[dl]^b \\
& G &
}}
\]
which also comprises two evident forgetful functors $p,q$ and a tautological natural isomorphism $\gamma \colon ap\Rightarrow bq$ whose component at the object $(s,t,\gamma)$ is the map~$\gamma$. The iso-comma square is the universal (in a strict 2-categorical sense) such invertible 2-cell sitting over the given cospan.
\end{Ter}

\begin{Cons}[{The bicategory $\Span$}]
\label{Cons:Span}
There exists a bicategory $\Span$ consisting of the following data. 
Its objects are all the finite groupoids. A 1-cell $H\to G$ is a `span' in~$\gpd$, that is a diagram
\[
\xymatrix{ H & S  \ar[l]_-{b} \ar[r]^-a & G}
\]
of functors between finite groupoids. 
A 2-cell from $H \overset{\;b}{\gets} S \overset{a}{\to} G$ to $H \overset{\;\;b'}{\gets} S' \overset{a'}{\to} G$ is the isomorphism class of a diagram in~$\gpd$ of the following form:
\[
\vcenter{
\xymatrix@R=10pt@C=12pt{
&& S \ar[dd]_(.3)f \ar[dll]_b \ar@{}[ddl]|{\beta \SEcell \;\;} \ar@{}[ddr]|{\;\;\NEcell \alpha} \ar[drr]^a && \\
H &&  && G \\
&& S' \ar[ull]^{b'} \ar[urr]_{a'} &&
}}
\]
(The orientation of the two 2-cells is merely a matter of convention.) Here, two such diagrams are \emph{isomorphic} if there exists a natural isomorphism between their 1-cell components $f$ which identifies their 2-cells components $\alpha$ and~$\beta$. The horizontal composition of spans is defined by constructing an iso-comma square in the middle: 
\[
\xymatrix@R=11pt@C=12pt{
&& (c/b)  \ar[dl]^-p \ar[dr]_-q \ar@/_3ex/[ddll]_-{dp} \ar@/^3ex/[ddrr]^-{aq}  \ar@{}[dd]|{} &&  \\
& T \ar@{}[rr]|{\Ivocell{\gamma}} \ar[dl]^-d \ar[dr]_-c && S \ar[dl]^-b \ar[dr]_-a & \\
K 
\ar@{..>}[rr] && H \ar@{..>}[rr] && G
}
\]
The horizontal composition of 2-cells, as well as the coherent associativity and unitality isomorphisms, are all induced by the universal property of iso-comma squares in a straightforward way. The identity span of $G$ is $\Id_G = (G = G = G )$. See \cite[\S\,5.1]{BalmerDellAmbrogio20} for more details.
\end{Cons}

In the following,  $(-)^\co$ and $(-)^\op$ denote, respectively, the operation of formally reversing the direction of the 2-cells or of the 1-cells in a bicategory.

\begin{Cons}[Canonical embeddings]
\label{Cons:can-emb}
There are two canonical pseudo-functors $(-)_!\colon \gpd^\co\hookrightarrow \Span$ and $(-)^*\colon \gpd^\op\hookrightarrow \Span$, embedding $\gpd$ inside of~$\Span$ in a way which is contravariant on 2-cells and on 1-cells, respectively: 
\[
\vcenter{
\xymatrix{
& \\
S \ar@{}[r]|{\Ncell\,\alpha} \ar@/^3ex/[r]^-a \ar@/_3ex/[r]_-{a'} &  G \\
&
}}
\quad\longmapsto\quad 
\alpha_! =
\left[
\vcenter{
\xymatrix@R=10pt@C=12pt{
&& S \ar@{=}[dd] \ar@{=}[dll] \ar@{}[ddl]|{\id \SEcell \;\;} \ar@{}[ddr]|{\;\;\NEcell \alpha} \ar[drr]^a && \\
S &&  && G \\
&& S \ar@{=}[ull] \ar[urr]_{a'} &&
}}
\right]
\]
\[
\vcenter{
\xymatrix{
& \\
H \ar@{}[r]|{\Scell\,\beta} \ar@/^3ex/@{<-}[r]^-b \ar@/_3ex/@{<-}[r]_-{b'} &  S \\
&
}}
\quad\longmapsto\quad 
\beta^* =
\left[
\vcenter{
\xymatrix@R=10pt@C=12pt{
&& S \ar@{=}[dd] \ar[dll]_-b \ar@{}[ddl]|{\beta \SEcell \;\;} \ar@{}[ddr]|{\;\;\NEcell \id} \ar@{=}[drr] && \\
H &&  && G \\
&& S \ar[ull]^{b'} \ar@{=}[urr] &&
}}
\right]
\]
(the above diagrams to be understood in~$\gpd$). 
Thus the embeddings map functors $a\colon S\to G$ and $b\colon S\to H$ to spans $a_!=(S = S \overset{a}{\to} G)$ and $b^*=(H \overset{\;b}{\gets} S = S)$, respectively,
and natural isomorphisms $\alpha\colon a' \Rightarrow a$ and $\beta\colon b \Rightarrow b'$ to the depicted 2-cells.
Note that these pseudo-functors are not strict. Every 1-cell of $\Span$ is (isomorphic to) a composite $a_! \circ b^*$, and similarly, every 2-cell $[f,\beta, \alpha]$ is a combination of  $\alpha_!$ and~$\beta^*$.
See \cite[Cons.\,5.1.18, Rem.\,5.1.19, Prop.\,5.1.32]{BalmerDellAmbrogio20} for details.
\end{Cons}

The key tool for defining the realization pseudo-functor $\cat R$ is the universal property of its source, the span bicategory: 

\begin{Thm} [Universal property of $\Span$] 
\label{Thm:UP-Span}
Suppose we are given a bicategory~$\cat B$, two pseudo-functors
\begin{equation*}
\cat F_! \colon \gpd^\co \to \cat B 		\quad\quad \textrm{ and } \quad\quad 		\cat F^*  \colon \gpd^\op\to \cat B
\end{equation*}
and, for every functor $u\colon H\to G$ between finite groupoids, an (internal) adjunction
$\cat F_!(u) \dashv \cat F^*(u)$
in~$\cat B$, with specified unit and counit.
Assume the following holds:
\begin{enumerate}[\rm (a)]
\item On objects, $\cat F_!$ and $\cat F^*$ coincide: $\cat F_!(G)= \cat F^*(G)$ for all~$G$.
\item The adjunctions satisfy base-change, a.k.a.\ the Beck-Chevalley condition, for all iso-comma squares (\Cref{Ter:squares}). In other words, for every iso-comma square in $\gpd$ as on the left
\begin{equation} \label{eq:BCmate}
\vcenter{
\xymatrix@C=12pt@R=12pt{
& (a/b) \ar[dl]_p \ar[dr]^q \ar@{}[dd]|{\underset{\sim}{\Ivocell{\gamma}}} & \\
S \ar[dr]_a &  & T \ar[dl]^b \\
& G &
}}
\quad\quad \rightsquigarrow \quad\quad
\vcenter{
\xymatrix@C=10pt@R=12pt{
&&& \cat FT \ar@{=}@/^4ex/[dd] \ar@{}[dd]|{\overset{\scriptstyle \varepsilon}{\Ecell}} \\
&& \cat F(a/b) \ar[ur]^{\cat F_!q}  & \\
& \cat FS \ar[ur]^-{\cat F^*p} \ar@{}[dd]|{\Ivocell{\eta}} \ar@{}[rr]|{\overset{\scriptstyle \cat F^*\gamma}{\Ecell}\;\;} & &\cat FT \ar[ul]^(.45){\cat F^*q\!\!} \\
&& \cat FG \ar[ul]^(.45){\cat F^*a\!\!\!} \ar[ur]_-{\cat F^*b} & \\
& \cat FS \ar[ur]_-{\cat F_!a} \ar@{=}@/^4ex/[uu] &&
}}
\end{equation}
the mate constructed on the right is an isomorphism $\cat F_!(q)\cat F^*(p)  \overset{\sim}{\Rightarrow}  \cat F^*(b) \cat F_!(a)$. 
\item For each 2-cell $\alpha \colon u\Rightarrow v$ in~$\gpd$, the 2-cells $\cat F_!(\alpha)$ and $\cat F^*(\alpha)$ of~$\cat B$ are each other's mate under the adjunctions $\cat F_!(u)\dashv \cat F^*(u)$ and $\cat F_!(v)\dashv \cat F^*(v)$ (after a necessary inversion). 
Similarly, the coherent structure isomorphisms of $\cat F_!$ and $\cat F^*$ are each other's mates  (in the only way which makes sense).
\end{enumerate}
Then the above data defines a pseudo-functor $\cat F\colon \Span \to \cat B$ by the composite
\[
\cat F \big( H \overset{b}{\gets} S \overset{a}{\to} G \big) :=  \cat F_!(a) \circ \cat F^*(b)
\]
for 1-cells and by the pasting 
\[
\cat F
\left( \left[
\vcenter{
\xymatrix@R=12pt{
& S \ar[dl]_b \ar[dr]^a \ar[dd]^(.3)f & \\
H & \ar@{}[l]|(.4){\beta\;\SEcell} \ar@{}[r]|(.4){\NEcell\;\alpha} & G \\
& S' \ar[ul]^{b'} \ar[ur]_{a'} &
}}
\right] \right)
\;\; := \;\;
\vcenter{
\xymatrix@R=15pt@C=15pt{
\cat F H \ar@{}[rr]|{\Scell\; \cat F^*\beta} \ar[dr]_{\cat F^*b'} \ar@/^4ex/[rr]^-{\cat F^*b} &&
 \cat F S \ar@{}[rr]|{\Scell\; \cat F_!\alpha} \ar[dr]| (.4) {\cat F_!f} \ar@/^4ex/[rr]^-{\cat F_!a } && \cat F G \\
& \cat F S' \ar@{}[rr]|{\Scell\; \varepsilon} \ar[ur]|{\cat F^*f} \ar@/_4ex/@{=}[rr] &&
 \cat FS' \ar[ur]_{\cat F_!a'} & \\
&&&&
}}
\]
for 2-cells. 
This $\cat F$ is, up to isomorphism, the unique pseudo-functor $\Span \to \cat B$ such that $\cat F_! \simeq \cat F\circ (-)_!$ and $\cat F^* \simeq \cat F\circ (-)^*$.
\[
\xymatrix{ 
\gpd^\co \ar@{}[dr]|{\simeq} \ar[d]_{(-)_!} \ar@/^2ex/[rrd]^{\cat F_!} && \\
\Span \ar@{-->}[rr]^-{\cat F} && \cat B \\
\gpd^\op \ar@{}[ur]|{\simeq} \ar[u]^{(-)^*} \ar@/_2ex/[urr]_{\cat F^*} && 
}
\]
\end{Thm}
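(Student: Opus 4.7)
The plan is to construct $\cat F$ explicitly from the given data by following the recipes in the statement, equip it with the necessary compositor and unitor to make it a pseudo-functor, and then read off uniqueness from the fact that every 1-cell and every 2-cell of $\Span$ is canonically built out of images of $(-)_!$ and $(-)^*$ (\Cref{Cons:can-emb}).

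First I would set $\cat F(G):=\cat F_!(G)=\cat F^*(G)$ on objects and $\cat F(H \overset{b}{\gets} S \overset{a}{\to} G):=\cat F_!(a)\circ \cat F^*(b)$ on 1-cells. The first real check is that the pasting formula for $\cat F([f,\beta,\alpha])$ does not depend on the chosen representative $f\colon S\to S'$: if $\varphi\colon f \Rightarrow f'$ is a natural isomorphism identifying two representatives, condition~(c) shows that $\cat F_!(\varphi)$ and $\cat F^*(\varphi)\inv$ are mates under the adjunctions $\cat F_!(f)\dashv \cat F^*(f)$ and $\cat F_!(f')\dashv \cat F^*(f')$, and inserting this into the pasting together with the triangle identities yields the same 2-cell. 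The same manipulation, combined with the pseudo-functoriality of $\cat F_!$ and $\cat F^*$, gives compatibility with vertical composition of 2-cells.

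The core of the proof is the construction of the compositor. Given composable spans $K \overset{d}{\gets} T \overset{c}{\to} H$ and $H \overset{b}{\gets} S \overset{a}{\to} G$, whose composite is formed via the iso-comma square of~$(c,b)$, the required isomorphism
\[
\cat F_!(a)\circ\cat F^*(b)\circ\cat F_!(c)\circ\cat F^*(d)\;\overset{\sim}{\Longrightarrow}\;\cat F_!(aq)\circ\cat F^*(dp)
\]
is obtained by inserting the Beck-Chevalley mate from~(b) at the middle pair $\cat F^*(b)\cat F_!(c)$ and then using the pseudo-functorial compositors of $\cat F_!$ and $\cat F^*$. The unitor comes for free from the pseudo-functoriality of $\cat F_!$ and $\cat F^*$ at identities. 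The main obstacle will be the associativity pentagon: for three composable spans one must match two a priori distinct rebracketings of a long pasting built from Beck-Chevalley mates, adjunction triangle identities, and compositors of $\cat F_!$ and $\cat F^*$. The key geometric input is that iso-commas of iso-commas are again iso-commas (up to canonical equivalence), so both rebracketings compute the Beck-Chevalley mate of a common `double iso-comma' diagram; the coherence embodied in~(b) and~(c) then forces them to coincide. Compatibility of the compositor with horizontal composition of 2-cells reduces similarly to the pseudo-functor coherences of $\cat F_!$ and $\cat F^*$ themselves, together with condition~(c).

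Uniqueness is essentially formal. Any pseudo-functor $\cat F'\colon \Span \to \cat B$ satisfying $\cat F'\circ (-)_!\simeq \cat F_!$ and $\cat F'\circ (-)^*\simeq \cat F^*$ must agree with $\cat F$ on objects by~(a); on 1-cells it must agree up to the compositor of~$\cat F'$, because $H \overset{b}{\gets} S \overset{a}{\to} G \cong a_!\circ b^*$ in~$\Span$; and on 2-cells it is then determined by the decomposition $[f,\beta,\alpha]$ into pieces of the form $\alpha_!$, $\beta^*$ and an adjunction counit recalled in \Cref{Cons:can-emb}.
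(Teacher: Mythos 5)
Your proposal follows a genuinely different route from the paper. The paper does not construct $\cat F$ from scratch; it cites the more general universal property \cite[Theorem~5.2.1]{BalmerDellAmbrogio20}, which is stated asymmetrically in terms of $\cat F^*$ alone (plus the specified adjunctions), then uses hypothesis~(c) to check that the resulting extension also satisfies $\cat F_! \simeq \cat F\circ(-)_!$ and that the 2-cell pasting agrees with the symmetric formula given in the theorem statement. Your approach instead builds the pseudo-functor directly from the data and verifies the coherence axioms by hand. That is more elementary and self-contained, and it has the advantage of treating $\cat F_!$ and $\cat F^*$ symmetrically from the outset, exactly as the theorem is phrased; the cost is that you must actually carry out the coherence verifications that \emph{loc.\,cit.}\ supplies.

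The outline is sound, but the step you flag as ``the main obstacle'' is doing more work than your sketch acknowledges. Saying ``iso-commas of iso-commas are iso-commas'' is the right geometric intuition, but the pentagon verification also relies on a pasting lemma for mates (that the mate of a pasted square equals the pasting of mates, applied to horizontally and vertically composed Beck--Chevalley squares) together with the coherence axioms of the pseudo-functors $\cat F_!$, $\cat F^*$ and the triangle identities of the given adjunctions. Establishing and keeping track of these, and of the canonical equivalence between the two iterated iso-commas, is precisely where the bulk of the work in \cite[Theorem~5.2.1]{BalmerDellAmbrogio20} lies. Similarly, your well-definedness check of $\cat F$ on a 2-cell class $[f,\beta,\alpha]$ needs to record that an isomorphism $\varphi\colon f\Rightarrow f'$ between representatives is required to be compatible with both $\alpha$ and~$\beta$, not merely with~$f$; the mate manipulation you describe then goes through, but only once this compatibility is invoked. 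None of these is a flaw in the strategy, but if you intend the proof to stand alone you would need to fill in these points (roughly the content of \cite[\S5.2]{BalmerDellAmbrogio20}) rather than gesture at them.

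One more small observation on uniqueness: your argument is correct, but it is worth noticing that, as in the paper's proof, the condition $\cat F^* \simeq \cat F\circ(-)^*$ together with the specified adjunctions already pins down $\cat F$ up to isomorphism; the condition $\cat F_! \simeq \cat F\circ(-)_!$ is then a consequence rather than an independent constraint, because $\cat F(u_!)$ is forced to be a left adjoint of $\cat F(u^*)\simeq\cat F^*(u)$, hence isomorphic to $\cat F_!(u)$. This is the asymmetry that makes the citation to the monograph possible.
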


\begin{proof}
This is a special case, and a slight rephrasing which emphasizes the symmetry of the present situation, of the more general \cite[Theorem~5.2.1]{BalmerDellAmbrogio20}. 
Indeed, by hypotheses (a) and (b) we can apply \emph{loc.\,cit.} with $\GG = \JJ = \gpd$ to the pseudo-functor $\cat F^*$. (To be precise, \emph{loc.\,cit.} assumes that the target bicategory is strict, so we should first replace $\cat B$ with a biequivalent 2-category~$\cat C$; this has the effect that we can only obtain an isomorphism $\cat F^* \simeq \cat F\circ (-)^*$ rather than an equality; \cf \cite[Theorem~5.3.7]{BalmerDellAmbrogio20}). We thus obtain an extension~$\cat F\colon \Span\to \cat B$, constructed as in the theorem with the only (possible) difference that, in the pasting defining the image of the 2-cell~$[f,\beta,\alpha]$, the 2-cell $\cat F_!(\alpha)$ must be replaced by the mate 
\[
\xymatrix@C=18pt@L=4pt{
 \cat F_!(a) \ar@{=>}[r]^-{\eta} &
  \cat F_!(a) \cat F^*(a'f) \cat F_!(a'f) \ar@{=>}[r]^-{\cat F^*\alpha} &
   \cat F_!(a) \cat F^*(a) \cat F_!(a'f) \ar@{=>}[r]^-\varepsilon &
    \cat F_!(a'f) \simeq \cat F_!(a') \cat F_!(f)
 }
\]
of~$\cat F^*(\alpha)$. This $\cat F$ is such that $\cat F^* \simeq \cat F \circ (-)^*$ and is unique up to an isomorphism of pseudo-functors for this property. By its construction, it is uniquely determined (on the nose) by the pseudo-functor $\cat F^*$, the isomorphism $\cat F^* \simeq \cat F \circ (-)^*$, and by taking mates with respect to the given adjunctions $\cat F_!(u)\dashv \cat F^*(u)$ for all~$u$.

It remains to see that we also have $\cat F_!\simeq \cat F \circ (-)_!$, and that the difference in the definition of 2-cells is only apparent.
These however are straightforward consequences of the construction of $\cat F$ together with hypothesis~$(c)$.
\end{proof}

We next recall our bicategory $\cat B$ of interest and proceed to introduce the structure needed to apply the universal property of~$\Span$.

\begin{Cons}[The bicategory $\Biset$]
\label{Cons:Biset}
There exists a bicategory $\Biset$ consisting of the following data. 
Its objects are all finite groupoids. 
A 1-cell $U\colon H\to G$ (a~`finite left-$G$ and right-$H$ biset', or `$G,H$-biset' for short) is a functor 
\[
U\colon H^\op\times G\longrightarrow \set
\]
taking values in the category of finite sets.  
A 2-cell $\varphi\colon U\Rightarrow V$ is a natural transformation $U\Rightarrow V$. 
The horizontal composition of two composable bisets $V\colon K\to H$ and $U\colon H\to G$ is given by the set-theoretical coend (see \Cref{sec:preliminaries})
\[
U \circ V = U \otimes_H V : = \int^{h\in H} U(h, - ) \times V(- , h) \colon \quad K^\op \times G \longrightarrow G \,.
\]
The identity 1-cell of a groupoid $G$ is its Hom functor $\Id_G=G(-,-)\colon G^\op\times G\to \set$. 
The horizontal composition of 2-cells is induced on the quotient sets in the evident way.
The coherent associativity and unitality constraints are the standard (evident) identifications of coends.
See \eg \cite[\S 7.8]{Borceux94a} for details.
\end{Cons}

\begin{Not} \label{Not:Rs}
Let $u\colon H\to G$ be any functor of finite groupoids. 
We will write 
\begin{align*}
\cat R_!(u):= G(u-,-) \colon H^\op\times G\to \set \\ 
\cat R^*(u):= G(- , u-)\colon G^\op\times H\to \set
\end{align*}
for the bisets $H\to G$ and $G\to H$ obtained by composing $u$ with the Hom functor of~$G$ in the two possible ways.
\end{Not}

\begin{Lem} \label{Lem:R-pseudo-funs}
The assignments $u\mapsto \cat R_!(u)$ and $u\mapsto \cat R^*(u)$ of \Cref{Not:Rs} extend canonically to two pseudo-functors 
\[ \cat R_!\colon \gpd^\co \to \Biset 
\quad \textrm{ and } \quad
\cat R^*\colon \gpd^\op \to \Biset 
\]
both of which act as the identity on objects (finite groupoids).
\end{Lem}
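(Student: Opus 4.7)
The plan is to spell out the data making each assignment into a pseudo-functor --- its action on 2-cells, its compositors, and its unitors --- and then to verify the coherence axioms; both pseudo-functors being the identity on objects is already part of the given data. For a natural isomorphism $\alpha \colon u \Rightarrow v$ between parallel functors $H \to G$ of finite groupoids, I would define
\[\cat R_!(\alpha) \colon \cat R_!(v) \Rightarrow \cat R_!(u), \qquad G(vh, g) \ni f \longmapsto f \circ \alpha_h,\]
reflecting the reversal of 2-cells prescribed by $\gpd^\co$, and dually
\[\cat R^*(\alpha) \colon \cat R^*(u) \Rightarrow \cat R^*(v), \qquad G(g, uh) \ni f \longmapsto \alpha_h \circ f.\]
Naturality of these transformations in $(h,g)$ comes from the naturality of $\alpha$ together with the functoriality of the Hom bifunctor of~$G$, and their functoriality in~$\alpha$ is immediate.

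Next, the unitors may be taken to be identities: the identity 1-cell of $G$ in $\Biset$ is $\Id_G = G(-,-)$, so that $\cat R_!(\id_G) = G(\id(-), -) = G(-,-) = \Id_G$ strictly, and similarly for~$\cat R^*$. For the compositors, given composable functors $K \xrightarrow{\,v\,} H \xrightarrow{\,u\,} G$, I apply the co-Yoneda \Cref{Lem:coYoneda} to the coend
\[\cat R_!(u) \otimes_H \cat R_!(v) \;=\; \int^{h \in H} G(uh, -) \times H(v-, h)\]
(at fixed $k \in K$ and $g \in G$, in the variable~$h$, using $\psi \colon vk \to h$ to absorb $h$ into $vk$) to obtain a natural isomorphism to $G(uv-, -) = \cat R_!(uv)$, sending the generator $[f, \varphi]_h$ to $f \circ u(\varphi)$ and with inverse $f \mapsto [f, \id_{vk}]_{vk}$. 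A dual application of co-Yoneda on the contravariant slot produces the compositor $\cat R^*(v) \otimes_H \cat R^*(u) \overset{\sim}{\to} \cat R^*(uv)$.

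It remains to verify the pseudo-functor coherence axioms. The unit triangles are vacuous since the unitors are identities. Naturality of the compositors in their 1-cell arguments, and their compatibility with the 2-cells $\cat R_!(\alpha)$ and $\cat R^*(\alpha)$ defined above, are direct checks on representatives $[f,\varphi]_h$. The associativity pentagon for a triple $L \xrightarrow{\,w\,} K \xrightarrow{\,v\,} H \xrightarrow{\,u\,} G$ reduces, via the Fubini \Cref{Lem:Fubini} applied to the double coend
\[\int^{(k,h)\in K \times H} G(uh,-) \times H(vk,h) \times K(w-,k),\]
to checking that the two iterated applications of co-Yoneda --- in the orders $h$-then-$k$ or $k$-then-$h$ --- both produce the isomorphism sending the generator $[f,\psi,\varphi]_{h,k}$ to $f \circ u(\psi) \circ uv(\varphi)$ in $G(uvw-,-)$. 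The argument for $\cat R^*$ is entirely symmetric. The only real obstacle here is notational bookkeeping, namely keeping the reversal conventions of $(-)^\co$ versus $(-)^\op$ and the directions of 2-cells consistent throughout; no genuine mathematical difficulty arises, the whole content being the functoriality of Hom packaged via the universal property of coends.
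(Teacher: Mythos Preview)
Your proposal is correct and follows essentially the same approach as the paper: you define the action on 2-cells by post- and pre-composition with~$\alpha$, take identity unitors, and obtain the compositors as the co-Yoneda isomorphisms $[f,\varphi]_h \mapsto f\circ u(\varphi)$ (resp.\ $[\zeta,\xi]_h \mapsto u(\zeta)\circ \xi$), exactly as the paper does. The only minor quibble is that the unit triangles are not literally vacuous --- one must still check that the compositor with an identity 1-cell agrees with the unitor of $\Biset$ --- but this is as trivial as you suggest, and the paper likewise leaves all coherence checks to the reader.
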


\begin{proof}
Let us specify this structure for~$\cat R^*$. By definition, $\cat R^*$ sends a groupoid $G$ to itself and (contravariantly) a functor $u\colon H\to G$ to the biset $\cat R^*(u)=G(-,u-)\colon G\to H$.
For a natural transformation $\alpha \colon u\Rightarrow v$, we naturally define the (covariant!) image $\cat R^*(\alpha)\colon \cat R^*(u)\Rightarrow \cat R^*(v)$ to be the natural isomorphism $G(-,u-) \Rightarrow G(-,v-)$ induced by~$\alpha$, by sending an element $\xi$ to $\alpha \circ \xi$. 
The assignment $\alpha \mapsto \cat R^*(\alpha)$ defines a functor for each pair $(H,G)$, as required.
The structure isomorphisms of the pseudo-functor are given by the identity map $\un_{\cat R^*}\colon \Id_{\cat R^*(G)} = G(-,-) = \cat R^*(\Id_G)$ for each~$G$ as unitor, and by the isomorphism
\begin{align*}
\fun_{\cat R^*}\colon \cat R^*(v) \circ \cat R^*(u)  = \int^{h\in H}  H(h, v-) \times G(-,uh)  \overset{\sim}{\longrightarrow} G(- , uv -) = \cat R^*(u\circ v)
\end{align*}
induced by composition, $[ \zeta , \xi ]_h \mapsto u(\zeta) \circ \xi$, for composable functors $K \overset{v}{\to} H \overset{u}{\to} G$; this map is clearly well-defined with inverse given by $\xi \mapsto [\id,\xi]$. The verification of the  coherence axioms is straightforward and left to the reader.

Similarly for~$\cat R_!$, a 2-cell $\alpha\colon u\Rightarrow v$ is sent (contravariantly) to the natural map $G(v-,-) \Rightarrow H(u-,-)$ given by \emph{pre}composition with $\alpha$, that is $\xi \mapsto \xi\circ \alpha$, and the structural isomorphism 
\begin{align*}
\fun_{\cat R_!}\colon \cat R_!(u) \circ \cat R_!(v)   = \int^{h\in H}   G(uh,-) \times H(v-, h)  \overset{\sim}{\longrightarrow} G(uv- , -) = \cat R_! (u\circ v)
\end{align*}
is again simply given by composition: $[\xi, \zeta]_h\mapsto \xi \circ u(\zeta)$.
\end{proof}

\begin{Lem}  \label{Lem:R-adj}
For every $u\colon H\to G$, there is an adjunction $\cat R_! (u)\dashv \cat R^*(u)$ in the bicategory $\Biset$, with the natural transformations 
\begin{align*}
\eta_u \colon \Id_H &\Longrightarrow \cat R^*(u)\circ \cat R_!(u)   &    \varepsilon_u \colon \cat R_!(u) \circ \cat R^*(u) &\Longrightarrow \Id_G \\
\zeta &\longmapsto [\id, u(\zeta)]   	&     [\xi', \xi ] &\longmapsto \xi'\xi
\end{align*}
providing the unit and counit, respectively.
\end{Lem}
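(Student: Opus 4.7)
The plan is to verify directly the two defining properties of an internal adjunction $\cat R_!(u) \dashv \cat R^*(u)$ in the bicategory $\Biset$: first, that the prescribed formulas define well-defined 2-cells $\eta_u$ and $\varepsilon_u$, and second, that the two triangle identities hold. All unitors and associators of $\Biset$ will be the canonical isomorphisms of iterated coends provided by co-Yoneda (\Cref{Lem:coYoneda}) and Fubini (\Cref{Lem:Fubini}), so the computations reduce to bookkeeping of representatives.

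For well-definedness, the formula for $\eta_u$ sends $\zeta \in H(h,h')$ to the representative $[\id_{uh'}, u(\zeta)]_{uh'}$ of an element of $\int^{g} G(g, uh') \times G(uh, g) = (\cat R^*(u)\circ \cat R_!(u))(h,h')$, and this is plainly natural in $(h,h')\in H^\op\times H$ since postcomposition by $u$ commutes with the $H$-action on both sides. For $\varepsilon_u$, the only substantive point is that composition descends to the coend: given $\psi \in H(h,h')$, the two equivalent representatives $[\xi'\,u(\psi), \xi]_h$ and $[\xi', u(\psi)\,\xi]_{h'}$ both map to the same element $\xi'u(\psi)\xi \in G(g,g')$, after which naturality in $(g,g')\in G^\op\times G$ is automatic.

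For the first triangle identity $(\cat R^*(u)\,\varepsilon_u)\circ(\eta_u\,\cat R^*(u)) = \id_{\cat R^*(u)}$, I would trace an element $\xi\in \cat R^*(u)(g,h) = G(g, uh)$ through the sequence of unitors, whiskered 2-cells, and associators: the inverse left unitor sends $\xi$ to $[\id_h, \xi]_h$; whiskering by $\eta_u$ replaces the $\id_h$-component by $[\id_{uh}, \id_{uh}]_{uh}$; the associator (a trivial Fubini identification) rearranges this to $[\id_{uh}, [\id_{uh}, \xi]_h]_{uh}$; whiskering by $\varepsilon_u$ evaluates the inner bracket as $\id_{uh}\circ \xi = \xi$, producing $[\id_{uh}, \xi]_{uh}$; and the right unitor returns $\id_{uh}\circ \xi = \xi$. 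The second triangle $(\varepsilon_u\, \cat R_!(u))\circ(\cat R_!(u)\,\eta_u) = \id_{\cat R_!(u)}$ is proved by an entirely symmetric bookkeeping starting from $\xi\in \cat R_!(u)(h,g) = G(uh,g)$.

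None of the steps presents a genuine obstacle; the only mildly delicate point is remembering the convention of which factor of the coend comes from the left versus the right 1-cell when writing representatives $[-,-]_x$, so that the whiskered 2-cells and the associators are applied to the correct slot. Once the notational conventions are fixed, the computations above are immediate.
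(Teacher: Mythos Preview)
Your proposal is correct and follows essentially the same approach as the paper: both verify well-definedness of $\eta_u$ and $\varepsilon_u$ directly and then establish the triangle (zig-zag) identities by tracing a generic element $\xi\in G(g,uh)$ through the composite of unitors, whiskerings, and associator, obtaining $\xi$ back. The paper displays this trace in a column diagram while you write it in prose, but the computations and the level of detail are the same, and both defer the second triangle identity to symmetry.
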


\begin{proof}
It is straightforward to verify that these are well-defined maps satisfying the zig-zag equations of an adjunction. 
For the latter,  at each object $(g,h)\in G^\op\times H$ we may follow an element $\xi \in \cat R^*(u)(g,h)=G(g,uh)$ through the composite
\[
\xymatrix@C=7pt@R=16pt@L=6pt{
\cat R^*(u) \ar@{=>}[d]^{\simeq} &&
 G(g,uh) \ar[d]  && 
   \xi \ar@{|->}[d] \ar@{}[ll]|-{\ni} \\
\Id_H \; \cat R^*(u) \ar@{=>}[d]_{\eta \, \circ \, \id} &&
  H(h, h ) \times G(g,uh) \ar[d] &&
   [\id, \xi ] \ar@{|->}[d] \\
\cat R^*(u) \; \cat R_!(u) \; \cat R^*(u) \ar@{=>}[d]_{\id \, \circ \, \varepsilon } &&
 G(uh, uh ) \times G(uh, uh ) \times G(g,uh)  \ar[d] &&
  [\id, u (\id), \xi] \ar@{|->}[d] \\
\cat R^*(u) \; \Id_G \ar@{=>}[d]^{\simeq} &&
 G(uh, uh) \times G(g, uh) \ar[d] &&
  [\id, u(\id) \circ \xi] \ar@{|->}[d] \\
\cat R^*(u) &&
 G(g, uh) &&
\id\circ  u(\id) \circ \xi = \xi
}
\]
which is thus shown to be the identity map, as required. 
In the above display, the middle column shows to which sets belong the representatives of the coend elements displayed on the right-hand column, before quotienting. The left and right unitors in $\Biset$ are induced by composition of maps, like~$\varepsilon$, with inverse given by insertion of an identity map. 

The verification of the other zig-zag equation is similar. 
\end{proof}

\begin{Lem} \label{Lem:R-BC}
For the adjunctions $\cat R_!(u)\dashv \cat R^*(u)$ of \Cref{Lem:R-adj}, the mate 
of  every iso-comma square $\gamma$ as in \eqref{eq:BCmate} is an invertible 2-cell in $\Biset$.
\end{Lem}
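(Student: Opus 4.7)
The plan is to compute both composite bisets explicitly, unfold the mate construction into a simple concrete formula, and then show the resulting natural transformation is a bijection pointwise by constructing an inverse with the universal property of the iso-comma groupoid.

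First, at a pair $(s,t)\in S^\op\times T$, the target biset evaluates to
\[
\bigl(\cat R^*(b)\circ \cat R_!(a)\bigr)(s,t) \;=\; \int^{g\in G}G(g,bt)\times G(as,g)\;\simeq\; G(as,bt)
\]
by co-Yoneda (\Cref{Lem:coYoneda}), while the source biset evaluates to
\[
\bigl(\cat R_!(q)\circ \cat R^*(p)\bigr)(s,t) \;=\; \int^{x=(s',t',\gamma_x)\,\in\,(a/b)} T(t',t)\times S(s,s').
\]
By chasing through the pasting in~\eqref{eq:BCmate} using the formulas for $\fun_{\cat R_!},\fun_{\cat R^*},\eta$ and $\varepsilon$ established in \Cref{Lem:R-pseudo-funs} and~\ref{Lem:R-adj}, I would check that the mate is the map
\[
\Phi_{s,t}\colon [\psi,\varphi]_x \;\longmapsto\; b(\psi)\circ \gamma_x \circ a(\varphi)
\]
sending a generator with $\psi\colon t'\to t$, $\varphi\colon s\to s'$ and $\gamma_x\colon as'\to bt'$ to the indicated composite in $G(as,bt)$. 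This computation is routine bookkeeping with coend representatives and the explicit unit/counit; I would do it once carefully to pin down the formula.

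The main step is then to show $\Phi_{s,t}$ is a bijection. For surjectivity, given any $\delta\in G(as,bt)$, the triple $x_0:=(s,t,\delta)$ is a bona fide object of $(a/b)$, and $[\id_t,\id_s]_{x_0}\mapsto\delta$. For injectivity, I would exploit the universal property of $(a/b)$: given any generator $[\psi,\varphi]_x$ with $x=(s',t',\gamma_x)$, mapping to $\delta=b(\psi)\gamma_x a(\varphi)$, the pair $(\varphi^{-1},\psi)$ defines a morphism $\alpha\colon x\to x_0=(s,t,\delta)$ in $(a/b)$, because the required commutation $\delta\cdot a(\varphi^{-1}) = b(\psi)\cdot\gamma_x$ is exactly the definition of~$\delta$. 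Applying the coend relation at $\alpha$ to the pair $(\id_t,\varphi)$ then gives
\[
[\psi,\varphi]_x \;=\; [\id_t\circ q\alpha,\varphi]_x \;=\; [\id_t,\,p\alpha\circ\varphi]_{x_0} \;=\; [\id_t,\id_s]_{x_0}.
\]
Thus every element of the coend has a canonical representative indexed by its image $\delta$, and $\Phi_{s,t}$ has two-sided inverse $\delta\mapsto[\id_t,\id_s]_{(s,t,\delta)}$.

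The one genuine obstacle is keeping the mate's pasting diagram aligned with the concrete coend representatives; all the structure isomorphisms of $\cat R_!$ and $\cat R^*$ ultimately amount to composition of morphisms in $G$, so I expect the final formula to be as simple as $b(\psi)\circ\gamma_x\circ a(\varphi)$, but verifying this takes a patient diagram chase. Once that is done, naturality in $(s,t)$ is automatic since $\Phi$ is built from universal constructions, so the above bijectivity check completes the proof. \qed
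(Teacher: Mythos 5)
Your proposal is correct and takes essentially the same approach as the paper's: unfold the mate to the explicit formula $[\psi,\varphi]_x\mapsto b(\psi)\gamma_x a(\varphi)$ on coend representatives and verify bijectivity by hand. Your injectivity argument is a small (and arguably slicker) variant: rather than directly exhibiting a morphism in $a/b$ identifying two generators with equal image, you show every generator reduces to the canonical one $[\id_t,\id_s]_{(s,t,\delta)}$, which simultaneously produces the two-sided inverse; and you also first simplify the target coend via co-Yoneda, whereas the paper keeps it in coend form, but these are presentational differences and the content is the same.
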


\begin{proof}
This is another direct computation, although rather more involved. 
Unfolding the construction of the mate~$\cat R^*(\gamma)_!\colon \cat R_!(q)\circ  \cat R^*(p) \Rightarrow \cat R^*(b) \circ \cat R_!(a)$, we obtain the following composite natural transformation on the left-hand side (where, as before, we omit the associativity constraints of~$\Biset$):
\[
\xymatrix@C=-3pt@R=16pt@L=6pt{
\cat R_!(q) \; \cat R^*(p) \ar@{=>}[d]^{\simeq} &&
 T(y,t) \!\times\! S(s,x) \ar[d] && 
   [\tau, \sigma] \ar@{|->}[d] \\
\cat R_!(q) \; \cat R^*(p) \; \Id_S \ar@{=>}[d]_{\id \,\circ\, \id \,\circ\, \eta} &&
 T(y,t) \!\times\! S(s,x) \!\times\! S(s,s)\ar[d]  &&
   [\tau, \sigma,\id] \ar@{|->}[d] \\
\cat R_!(q) \; \cat R^*(p) \; \cat R^*(a) \; \cat R_!(a) \ar@{=>}[d]_{\id \,\circ\, \fun_{\cat R^*} \,\circ \, \id}^\simeq &&
   T(y,t) \!\times\! S(s,x) \!\times\! G(as,as) \!\times\! G(as,as) \ar[d] &&
    [\tau, \sigma,\id, \id] \ar@{|->}[d]  \\
\cat R_!(q) \; \cat R^*(ap) \; \cat R_!(a) \ar@{=>}[d]_{\id \,\circ\, \gamma \,\circ \, \id} &&
    T(y,t) \!\times\! G(as,ax) \!\times\! G(as,as) \ar[d] &&
     [\tau,a(\sigma), \id] \ar@{|->}[d]  \\
\cat R_!(q) \; \cat R^*(bq) \; \cat R_!(a) \ar@{=>}[d]_{\id \,\circ\, \fun^{-1}_{\cat R^*} \,\circ \, \id}^\simeq &&
    T(y,t) \!\times\! G(as,bx) \!\times\! G(as,as) \ar[d] &&
      [\tau, \gamma a(\sigma), \id]  \ar@{|->}[d]  \\
\cat R_!(q) \; \cat R^*(q) \; \cat R^*(b) \; \cat R_!(a) \ar@{=>}[d]_{\varepsilon \,\circ\, \id \,\circ\, \id} &&
    T(y,t) \!\times\! T(y,y) \!\times\! G(as,by) \!\times\! G(as,as) \ar[d] &&
      [\tau, \id, \gamma a(\sigma), \id] \ar@{|->}[d] \\
\Id_T \; \cat R^*(b) \; \cat R_!(a) \ar@{=>}[d]^{\simeq} &&
    T(y,t) \!\times\! G(as,by) \!\times\! G(as,as) \ar[d] &&
     [\tau, \gamma a(\sigma), \id] \ar@{|->}[d] \\
\cat R^*(b) \; \cat R_!(a) &&
    G(as,bt) \!\times\! G(as,as) && 
     [b(\tau) \gamma a(\sigma), \id]
}
\]  
At each object $(s,t)\in S^\op\times T$, we can follow the trajectory of an arbitrary element $[\tau, \sigma]_i \in (\cat R_!(q) \circ \cat R^*(p)) (s,t)$, as indicated on the right-hand side. 
Here $(\tau, \sigma)\in T(qi , t) \times S(s, pi)$ for some object $i = (x,y,\gamma\colon a(x)\to b(y))\in (a/b)$, so that $p(i)=x$ and $q(i)=y$.
The structural isomorphism $\fun_{\cat R^*}$ and its inverse are as in the proof of \Cref{Lem:R-pseudo-funs} (again, given by composition and insertion of an identity).

It remains to see that the resulting map above
\begin{align*}
\int^{i \in (a/b)} T(qi , t) \times S(s,pi ) &\longrightarrow \int^{g\in G} G(g, bt) \times G(as ,g)  \\
 [\tau,\sigma]_i   & \longmapsto [\,b(\tau) \gamma a(\sigma)\,,\, \id\,]_{as}
\end{align*}
is a bijection. 

It is injective, because if $[\tau',\sigma']_{i '}$ (for some $i'=(x',y',\gamma')$) is such that we have $[b(\tau') \gamma' a (\sigma'), \id]=[b(\tau) \gamma a(\sigma), \id]$  in the target coend, then (using that $G$ is a groupoid) there exists a $\varphi\colon as \to as'$ in $G$ such that
$
b(\tau') \gamma' a (\sigma') \circ \varphi =  b(\tau) \gamma a (\sigma)
$ and $
\varphi \circ \id_{as} = \id_{as}
$,
and therefore
\[ 
b(\tau') \gamma' a (\sigma')  =  b(\tau) \gamma a (\sigma) \,.
\]
The latter condition states precisely that the pair 
$(\tau'^{-1}\tau , \sigma'\sigma^{-1})\in T(y,y') \times S(x,x')$ 
defines a map $i \to i'$ in $a/b$, showing that $[\tau, \sigma]_i=[\tau',\sigma']_{i'}$ in the source coend. 

To see the map is surjective, let $(\zeta, \xi)\in G(g,bt)\times G(as, g)$ represent an arbitrary element of the target coend.
Then $i:= (s,t, \zeta \xi \colon as \to bt)$ is an object of $a/b$ and  $[\id,\id]_i$ is an element of the source coend whose image is $[\zeta\xi, \id]_{as}=[\zeta,\xi]_g$.
\end{proof}

\begin{Lem} \label{Lem:R-mates}
Consider the pseudo-functors $\cat R_!$ and $\cat R^*$ of \Cref{Lem:R-pseudo-funs}. Their 2-cell images, as well as their structural isomorphisms, are mates under the adjunctions of \Cref{Lem:R-adj} (after inverting).
\end{Lem}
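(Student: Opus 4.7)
The plan is to verify both assertions by direct, element-level coend computation, in the same style as the proofs of Lemmas \ref{Lem:R-adj} and \ref{Lem:R-BC}. Since $\gpd$ is a 2-category of groupoids, every $\alpha$ is automatically invertible, which takes care of the ``after inversion'' clause.

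For the 2-cell assertion, fix $\alpha \colon u \Rightarrow v$ and compute the mate of $\cat R^*(\alpha) \colon \cat R^*(u) \Rightarrow \cat R^*(v)$ along $\cat R_!(u)\dashv \cat R^*(u)$ and $\cat R_!(v)\dashv \cat R^*(v)$, namely the pasting
\[ \cat R_!(v) \stackrel{\eta_u}{\Longrightarrow} \cat R_!(v)\,\cat R^*(u)\,\cat R_!(u) \stackrel{\cat R^*(\alpha)}{\Longrightarrow} \cat R_!(v)\,\cat R^*(v)\,\cat R_!(u) \stackrel{\varepsilon_v}{\Longrightarrow} \cat R_!(u), \]
with the appropriate whiskerings and $\Biset$-unitors suppressed. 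Plugging in the explicit formulas $\eta_u(\zeta) = [\id, u(\zeta)]$, $\cat R^*(\alpha)(\eta) = \alpha \eta$ and $\varepsilon_v[\xi', \xi] = \xi'\xi$ from Lemmas \ref{Lem:R-pseudo-funs} and \ref{Lem:R-adj}, one traces a generator $\xi \in G(vh, g) = \cat R_!(v)(h, g)$ step by step, exactly as in the large display in the proof of Lemma \ref{Lem:R-BC}. Each step amounts to inserting an identity or composing in $G$, and the final output is $\xi \circ \alpha_h \in G(uh, g)$, which is $\cat R_!(\alpha)(\xi)$ by definition.

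The coherence-isomorphism assertion is verified in the same way. Both $\fun_{\cat R_!}$ and $\fun_{\cat R^*}$ are implemented by composition in $G$ (with inverses given by insertion of an identity), and both unitors $\un_{\cat R_!}$, $\un_{\cat R^*}$ are literal equalities. Mate-computing $\fun_{\cat R^*}$ along the triple of adjunctions $\cat R_!(u)\dashv \cat R^*(u)$, $\cat R_!(v)\dashv \cat R^*(v)$, $\cat R_!(uv)\dashv \cat R^*(uv)$ reduces, after the same pattern of identity insertions and cancellations in $G$, to the inverse of $\fun_{\cat R_!}$; the unitors mate to each other trivially.

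The only obstacle is bookkeeping: keeping track of the multiply-nested coend indices and of the unitors and associators of $\Biset$ that are implicit in each pasting diagram. Every individual step is a trivial identity in $G$, so the verification is tedious but presents no conceptual difficulty, and can be packaged as a pair of diagrams in the style of Lemma \ref{Lem:R-BC}.
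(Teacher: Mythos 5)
Your plan is correct and uses essentially the same approach as the paper's proof: a direct element-level trace of a generator through the composite natural transformation defining the mate, plugging in the explicit formulas for $\eta_u$, $\varepsilon_u$ from \Cref{Lem:R-adj} and for $\cat R_!(\alpha)$, $\cat R^*(\alpha)$, $\fun_{\cat R_!}$, $\fun_{\cat R^*}$ from \Cref{Lem:R-pseudo-funs}, in the display style of \Cref{Lem:R-BC}. The only difference is cosmetic: you take the mate of $\cat R^*(\alpha)$ (resp.\ $\fun_{\cat R^*}$) and identify it with $\cat R_!(\alpha)$ (resp.\ $\fun_{\cat R_!}^{-1}$), whereas the paper takes the mate of $\cat R_!(\alpha)$ (resp.\ $\fun_{\cat R_!}$) and identifies it with $\cat R^*(\alpha)$ (resp.\ $\fun_{\cat R^*}^{-1}$); since taking mates is a bijection, the two verifications are equivalent.
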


\begin{proof}
Once again, a direct inspection of all definitions yields the result. 
Explicitly, for every 2-cell $\alpha\colon u\Rightarrow v\colon H\to G$ in $\gpd$ we must verify that the left-hand side composite
natural transformation $\cat R^*(u)\Rightarrow \cat R^*(v)$
\[
\xymatrix@C=8pt@R=16pt@L=6pt{
\cat R^*(u) \ar@{=>}[d]_{\simeq} &&
 G(g,uh) \ar[d] &&
  \xi \ar@{|->}[d] \\
\Id_H \; \cat R^*(u) \ar@{=>}[d]_{\eta \, \circ \, \id} &&
 H(h,h) \times G(g,uh) \ar[d] &&
  [\id,\xi] \ar@{|->}[d] \\
\cat R^*(v) \; \cat R_!(v) \; \cat R^*(u) \ar@{=>}[d]_{\id \, \circ \, \cat R_!(\alpha) \, \circ \, \id}  &&
 G(vh,vh) \times G(vh,vh) \times G(g,uh) \ar[d]_{- \circ \alpha_h} &&
  [\id, \id, \xi] \ar@{|->}[d] \\
\cat R^*(v) \; \cat R_!(u) \; \cat R^*(u) \ar@{=>}[d]_{\id \, \circ \, \varepsilon} &&
 G(vh,vh) \times G (uh, vh) \times G(g,uh) \ar[d] &&
  [\id, \alpha_h, \xi] \ar@{|->}[d] \\
\cat R^*(v) \; \Id_G \ar@{=>}[d]_{\simeq} &&
 G(vh,vh) \times G(g,vh) \ar[d] &&
  [\id, \alpha_h\xi] \ar@{|->}[d] \\
\cat R^*(v) &&
 G(g,vh) &&
  \alpha_h\xi 
} 
\]
is equal to $\cat R^*(\alpha)$. 
For every object $(g,h)\in G^\op\times H$, we can follow the fate of (a representative of) an element $\xi\in G(g,uh)$ as on the right-hand side, and the resulting map $\xi \mapsto \alpha_h \xi$ is indeed the component of $\cat F^*(\alpha)$ at~$(g,h)$, as defined. 

Moreover, for any composable $K \overset{v}{\to} H \overset{u}{\to} G$ we must verify that the following composite $\cat R^*(uv)\Rightarrow \cat R^*(v)\circ \cat R^*(u)$ is the inverse~$\fun_{\cat R^*}^{-1}$ of the structure isomorphism of the pseudo-functor~$\cat R^*$:
\[
\xymatrix@C=8pt@R=16pt@L=6pt{
\cat R^*(uv) \ar@{=>}[d]_{\simeq} &&
  \xi \ar@{|->}[d] \\
\Id_K \; \cat R^*(uv) \ar@{=>}[d]_{\eta\,\circ\,\id} &&
 [\id, \xi] \ar@{|->}[d]  \\
\cat R^*(v) \; \cat R_!(v) \; \cat R^*(uv) \ar@{=>}[d]_\simeq &&
   [\id,\id,\xi] \ar@{|->}[d]  \\
\cat R^*(v) \; \Id_H \; \cat R_!(v) \; \cat R^*(uv) \ar@{=>}[d]_{\id \,\circ\, \eta \,\circ\, \id \,\circ\, \id } &&
 [\id,\id,\id,\xi] \ar@{|->}[d]  \\
\cat R^*(v) \; \cat R^*(u) \; \cat R_!(u) \; \cat R_!(v) \; \cat R^*(uv) \ar@{=>}[d]_{\id \,\circ\, \id \,\circ\, \fun_{\cat R_!} \,\circ\, \id } &&
  [\id,\id,\id,\id,\xi] \ar@{|->}[d]   \\
\cat R^*(v) \; \cat R^*(u) \; \cat R_!(u v) \; \cat R^*(uv) \ar@{=>}[d]_{\id \,\circ \, \varepsilon} &&
   [\id,\id,\id \circ \id,\xi] \ar@{|->}[d]  \\
\cat R^*(v) \; \cat R^*(u) \; \Id_G \ar@{=>}[d]_{\simeq} &&
   [\id,\id, \id \circ \xi] \ar@{|->}[d] \\
\cat R^*(v) \; \cat R^*(u) &&
  [\id \circ \id, \xi]  \\
} 
\]
At any $(g,k)\in G^\op\times K$, this amounts to inserting a number of identity maps and composing twice, as indicated in the right-hand colunn, and the resulting map $\xi \mapsto [\id, \xi]$ is indeed the inverse of $\fun_{\cat R_!}$, as we have seen.  

A similar verification, amounting to the counit $\varepsilon\colon \cat R_!(\Id_G) \circ \cat R^*(\Id_G)\Rightarrow \Id_G$ and the left unitor in~$\Biset$ agreeing, shows that the unitors of $\cat R_!$ and $\cat R^*$ are mates.
\end{proof}

\begin{proof}[Proof of \Cref{Thm:main1-intro}]
Apply \Cref{Thm:UP-Span} to the bicategory of bisets, $\cat B:=\Biset$, the pseudo-functors $\cat F_!:=\cat R_!$ and $\cat F^*:=\cat R^*$ of \Cref{Lem:R-pseudo-funs}, and the adjunctions of \Cref{Lem:R-adj}.
The hypotheses~(a), (b) and (c) of the theorem are satisfied by definition, by \Cref{Lem:R-BC}, and by~\Cref{Lem:R-mates} respectively.

It remains to prove the `moreover' part. Let $U\colon H^\op\times G\to \set$ be any biset. 
Then we can construct a span $\cat S(U) = (H  \overset{\;q}{\gets} S(U)  \overset{p}{\to} G) $ and an isomorphism $\cat R\cat S(U)\simeq U$ of bisets, as follows (\cf \eg \cite[\S\,6.4]{Benabou00pp}). 
The groupoid $S(U)$ has object-set $\Obj S(U)=\coprod_{(h,g)\in H^\op\times G} U(h,g)$, and a morphism from $x \in U(h,g)$ to $x'\in U(h',g')$ is a pair $(\beta,\alpha)\in H(h,h')\times G(g,g')$ such that $U(\id,\alpha)(x)=U(\beta,\id)(x')$, with composition induced from $H$ and~$G$. 
The functors $q\colon S(U)\to H$ and $p\colon S(U)\to G$ map an object $x \in U(h,g)\subseteq \Obj S(U)$ to its `source'~$h$ and `target'~$g$, respectively, and a morphism $(\beta, \alpha)$ to $\beta$ and~$\alpha$.
The component at $(h,g)\in H^\op\times G$ of the natural isomorphism $\cat R\cat S (U)\overset{\sim}{\Rightarrow}U$ is the evaluation map 
\[
\big( \cat R_! (p)\otimes_{S(U)} \cat R^*(q) \big)(h,g) = \int^{x \in S(U)} G(px,g) \times H(h,qx) \overset{\sim}{\longrightarrow}  U(h,g)
\]
sending $[\alpha,\beta]_x \mapsto U(\beta, \alpha)(x)$, which is easily seen to be bijective and natural.
\end{proof}

\section{Application: biset functors vs global Mackey functors}
\label{sec:bisetfun}%

In this section we derive \Cref{Cor:bisetfun} from our previous results.
There is not much left for us to do, in fact, besides recalling a few more details and putting everything together.
As before, fix a commutative ring~$\kk$ of coefficients.

\begin{Ter}
\label{Rec:pih}
Let $\cat B$ be any bicategory. Its \emph{1-truncation} or \emph{classifying category} $\pih \cat B$ is the (ordinary) category with the same objects as $\cat B$ and with morphisms the isomorphism classes of 1-morphisms of~$\cat B$. Any pseudo-functor $\cat F\colon \cat B\to \cat B'$ induces a functor $\pih \cat F\colon \pih \cat B\to \pih \cat B'$ in the evident way, by sending a class $[f]$ to $[\cat Ff]$.
\end{Ter}

\begin{Ter}
\label{Rec:semi-add}
A category is \emph{semi-additive} if it is enriched over commutative monoids (\ie every Hom set is equipped with an associative unital sum operation for which composition is bilinear) and if it admits arbitrary finite direct sums (a.k.a.\ biproducts) $X_1\oplus \cdots \oplus X_n$ of its objects, including a zero object (empty direct sum)~$0$. 
If $\cat C$ is any semi-additive category, we may construct a $\kk$-linear additive category~$\kk \cat C$, its \emph{$\kk$-linearization}, with the same objects and with Hom $\kk$-modules given by first group-completing the monoid and then extending scalars:  $\kk \cat C(X,Y):= \kk \otimes_\mathbb Z K_0(\cat C(X,Y))$. 
There is an evident functor $\cat C\to \kk\cat C$ which is initial among functors to $\kk$-linear additive categories. See \eg \cite[App.\,A.6]{BalmerDellAmbrogio20}.
\end{Ter}

\begin{Exa}
\label{Exa:Sp(G)}
Similarly to~$\Span$, one may consider the bicategory $\cat B=\Span(G\sset)$ of finite $G$-sets, spans of maps in~$G\sset$, composed by taking pull-backs, and morphisms of spans. 
Its 1-truncation $\pih\Span(G\sset)$ is a well-known semi-additive category, which already appeared as ``$\spancat(G)$'' in the proof of \Cref{Lem:Yo}; the $\kk$-linearization of the latter is the category $\spancat_\kk(G)$ of~\Cref{Rec:span}. 
Next, we apply the same constructions to spans and bisets of groupoids. 
\end{Exa}

\begin{Lem}
\label{Lem:semi-add}
Consider the bicategories $\Span$ and $\Biset$ of Constructions \ref{Cons:Span} and~\ref{Cons:Biset}.
The disjoint sums of groupoids induce on their 1-truncations $\pih \Span$ and $\pih \Biset$ the structure of semi-additive categories (\Cref{Rec:semi-add}).
\end{Lem}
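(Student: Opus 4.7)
The plan is to treat both bicategories by the same pattern, exhibiting the sum on Hom sets, checking bilinearity, and then verifying the biproduct property.

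First I would describe the addition of parallel 1-cells explicitly. In $\Span$, the sum of two spans $H \stackrel{b}{\gets} S \stackrel{a}{\to} G$ and $H \stackrel{b'}{\gets} S' \stackrel{a'}{\to} G$ is defined to be the span $H \gets S \sqcup S' \to G$ obtained by the universal property of the disjoint union of groupoids; the zero is the span $H \gets \emptyset \to G$. In $\Biset$, the sum of $U,V\colon H^\op\times G \to \set$ is their object-wise disjoint union $(U\oplus V)(h,g):=U(h,g)\sqcup V(h,g)$, and the zero is the constant empty-set functor. In both cases, the associativity, commutativity, and unitality of $\sqcup$ (up to canonical isomorphism in $\gpd$ or up to natural isomorphism in $\set$) translate into the axioms of a commutative monoid after passing to the 1-truncation, where isomorphic 1-cells are identified.

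Next I would verify that horizontal composition is bilinear with respect to these sums. For $\Span$, this reduces to the observation that iso-comma squares in $\gpd$ preserve disjoint unions in each input: given a cospan $S \stackrel{a}{\to} G \stackrel{b}{\gets} T$, one has canonical equivalences $a/(b\sqcup b') \simeq (a/b) \sqcup (a/b')$ and symmetrically in the other variable, because objects of the iso-comma groupoid are triples $(s,t,\gamma)$ with $\gamma$ landing in a connected component of the target. For $\Biset$, bilinearity follows from the fact that the coend $\int^{h} U(h,-)\times V(-,h)$ preserves coproducts in either of $U$ or $V$ (since both $\times$ in $\set$ and the coequalizer defining the coend commute with coproducts in each variable).

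Finally I would construct finite direct sums. The empty groupoid serves as a zero object: there is a unique span (resp.\ biset) up to isomorphism from or to it, namely the one passing through the empty middle term (resp.\ the constant empty functor). For two finite groupoids $G_1,G_2$, I claim $G_1\sqcup G_2$ is a biproduct in $\pih\Span$ and $\pih\Biset$. The required structure maps are those induced by the inclusions $\iota_k\colon G_k\hookrightarrow G_1\sqcup G_2$, via $(\iota_k)_!$ and $(\iota_k)^*$ in $\Span$ (see \Cref{Cons:can-emb}) and via $\cat R_!(\iota_k)$, $\cat R^*(\iota_k)$ in $\Biset$ (see \Cref{Not:Rs}). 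The biproduct identities $\iota_k^*\iota_k = \id$, $\iota_k^*\iota_j = 0$ for $j\neq k$, and $\iota_1\iota_1^* \oplus \iota_2\iota_2^* = \id$ are then routine to verify: the first two reduce to computing iso-comma squares (resp.\ coends) over a disjoint union and using that $G_k$'s pull back trivially along disjoint inclusions, while the last expresses the fact that every object of $G_1\sqcup G_2$ lies in exactly one summand.

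The main obstacle will be the last of these biproduct identities, $\iota_1\iota_1^* \oplus \iota_2\iota_2^* = \id_{G_1\sqcup G_2}$: one must unfold both sides into spans (resp.\ bisets), identify the middle term of the left-hand composite with $G_1\sqcup G_2$ itself (after computing two iso-comma squares and taking their disjoint union, resp.\ after computing two coends and summing), and then recognize the resulting span (resp.\ biset) as isomorphic to the identity $\Id_{G_1\sqcup G_2}$. All other verifications are either immediate from the definitions or follow from the universal properties of $\sqcup$ in $\gpd$ and of coends.
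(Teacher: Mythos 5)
Your proposal is correct and follows essentially the same route as the paper: same definitions of the sum of parallel 1-cells (disjoint union in the middle, resp.\ object-wise coproduct), same zero object (the empty groupoid), and same biproduct structure maps via the covariant/contravariant images of the inclusions $G_k \hookrightarrow G_1 \sqcup G_2$. The paper's proof merely states the structure and declares the verifications straightforward, whereas you spell out the bilinearity of horizontal composition and the biproduct identities, which are indeed the routine verifications the paper elides.
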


\begin{proof} 
The sum of any two (parallel) spans $H \overset{\;b}{\gets} S \overset{a}{\to} G$ and $H \overset{\;\;b'}{\gets} S' \overset{a'}{\to} G$ is given by the span $H \overset{(b,b')}{\longleftarrow} S \sqcup S' \overset{(a,a')}{\longrightarrow} G$, and the zero span by $H \gets \emptyset \to G$. For bisets, the sum of $U,V\colon H^\op\times G\to \set$ is the object-wise coproduct $U\sqcup V$, and the zero biset is the constant functor $\emptyset\colon H^\op\times G\to \set$.
The zero object is given in both cases by the empty groupoid, $0= \emptyset$.
The direct sum of two groupoids $G_1,G_2$ is given in $\Span$ and~$\Biset$, respectively (and with notations as in \Cref{sec:R}) by
\begin{equation} \label{eq:dir-sums}
\xymatrix{
G_1 \ar@<.5ex>[r]^-{(i_1)_!} & G_1\sqcup G_2 \ar@<-.5ex>[r]_-{(i_2)^*} \ar@<.5ex>[l]^-{(i_1)^*} & G_2 \ar@<-.5ex>[l]_-{(i_2)_!}
}
\quad \textrm{ and } \quad
\xymatrix{
G_1 \ar@<.5ex>[r]^-{\cat R_!(i_1)} & G_1\sqcup G_2 \ar@<-.5ex>[r]_-{\cat R^*(i_2)} \ar@<.5ex>[l]^-{\cat R^*(i_1)} & G_2 \ar@<-.5ex>[l]_-{\cat R_!(i_2)}
}
\end{equation}
\ie by the canonical covariant and contravariant images of the two inclusions $i_1\colon G_1 \to G_1\sqcup G_2 \gets G_2\,:\!i_2$ in~$\gpd$.
All verifications are straightforward.

(In fact, even before 1-truncating, \eqref{eq:dir-sums} are direct sums in the bicategorical sense; and the sum of 1-morphisms is actually a categorical direct sum, so that the Hom categories are themselves semi-additive; \cf \cite[App.\,A.7]{BalmerDellAmbrogio20} and \cite[Prop.\,3.15]{DellAmbrogio19pp}.) See \cite[\S\,4.3]{Huglo19pp} for more details.
\end{proof}

\begin{Not}
\label{Not:spank-bisk}
As in the introduction, we write 
\[
\spank := \kk \tau_1 (\Span  )
\quad\textrm{ and } \quad
\bisetcat_\kk := \kk \tau_1 ( \Biset )
\]
for the $\kk$-linearization (\Cref{Rec:semi-add}) of the 1-truncation (\Cref{Rec:pih}) of the bicategories of spans and bisets. 
The former makes sense by \Cref{Lem:semi-add}. 
Then 
\[
\cat M := \Rep \spank 
\quad\textrm{ and } \quad
\cat F := \Rep \bisk
\]
are, respectively, the category of \emph{global Mackey functors} and of \emph{biset functors}.
\end{Not}

\begin{Lem} \label{Lem:rigid-for-R}
Both $\spank$ and $\bisk$ are rigid $\kk$-linear tensor categories, with tensor products of objects and maps induced by the Cartesian product of groupoids.
In particular, we may equip $\cat M$ and $\cat F$ with the associated Day convolutions. 
\end{Lem}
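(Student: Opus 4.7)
The plan is to first lift the Cartesian product of groupoids to symmetric monoidal structures on the bicategories $\Span$ and $\Biset$, then descend to the 1-truncated $\kk$-linearizations $\spank$ and $\bisk$, establish self-duality of every object in both, and finally invoke the Day convolution construction of \Cref{Cons:Day-convo}.

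For the tensor on $\Span$, I would set $(H \xleftarrow{b} S \xrightarrow{a} G) \otimes (H' \xleftarrow{b'} S' \xrightarrow{a'} G') := H \times H' \xleftarrow{b \times b'} S \times S' \xrightarrow{a \times a'} G \times G'$, with tensor unit the terminal groupoid $\unit$, and tensor of 2-cells given by componentwise Cartesian product of representatives. Pseudo-functoriality reduces to the observation that Cartesian products of iso-comma squares are again iso-comma (both being 2-limits in $\gpd$, hence commuting with each other). For $\Biset$, I would use the external product $(U,V) \mapsto U \boxtimes V$ with $(U \boxtimes V)((h,h'),(g,g')) := U(h,g) \times V(h',g')$; its compatibility with horizontal composition rests on the fact that finite products in $\set$ commute with coends. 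Symmetry, unitors and associators descend from those on $\gpd$. Since 1-truncation and $\kk$-linearization commute with Cartesian products (being defined by universal properties), these tensor structures descend to $\spank$ and $\bisk$. Finally, $\kk$-bilinearity of $\otimes$ follows because direct sums in $\pih\Span$ and $\pih\Biset$ are induced by disjoint unions of groupoids (\Cref{Lem:semi-add}) and $(G_1 \sqcup G_2) \times H \simeq (G_1 \times H) \sqcup (G_2 \times H)$ in $\gpd$.

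For rigidity, I claim that every $G$ is its own tensor dual. In $\spank$ this is essentially tautological: a span $G \times H \xleftarrow{(a_G, a_H)} S \xrightarrow{b} K$ is literally the same data as a span $H \xleftarrow{a_H} S \xrightarrow{(a_G, b)} G \times K$, and this reshuffling yields a natural bijection $\spank(G \otimes H, K) \simeq \spank(H, G \otimes K)$ that respects iso-classes, sums and $\kk$-linear combinations. Equivalently, explicit unit and counit are given by the spans $\unit \leftarrow G \xrightarrow{\Delta_G} G \times G$ and $G \times G \xleftarrow{\Delta_G} G \to \unit$, whose zig-zag equations reduce to the iso-comma computation $\Delta_G / \Delta_G \simeq G$. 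For $\bisk$, the cleanest route is to invoke the realization pseudo-functor $\cat R$ of \Cref{Thm:main1-intro}: it is the identity on objects and, by a direct inspection of \Cref{Not:Rs}, canonically symmetric monoidal with respect to the Cartesian products, since $\cat R_!(u \times u') \simeq \cat R_!(u) \boxtimes \cat R_!(u')$ and similarly for $\cat R^*$. A symmetric monoidal functor preserves adjunctions of the form $X \otimes - \dashv X^\vee \otimes -$, so self-duality transports from $\spank$ to $\bisk$.

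The last sentence is then immediate: both $\spank$ and $\bisk$ are essentially small $\kk$-linear tensor categories, so \Cref{Cons:Day-convo} applies and yields the required tensor structures on $\cat M$ and $\cat F$. The main bookkeeping obstacle is the verification of pseudo-functoriality of the Cartesian tensor on $\Span$ and $\Biset$ at the 2-cell level and the coherence of the associated symmetry; once this is in hand, both rigidity and Day convolution follow formally by the arguments above.
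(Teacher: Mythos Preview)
Your proposal is correct and aligns with the paper's approach: both define the tensor via the Cartesian product of groupoids on objects, spans, and bisets, with unit the trivial groupoid, and then assert rigidity by self-duality. The paper's own proof is in fact only a sketch (``the rest is similarly straightforward'') with a pointer to the second author's thesis for details, so your write-up is actually more explicit than the paper's.

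Two small differences worth flagging. First, you lift the tensor to the bicategory level before descending; the paper explicitly declines to do this (see its footnote: ``both rigid tensor structures should be mere shadows of rigid tensor structures on the \emph{bicategories} $\Span$ and $\Biset$ \ldots\ but we have not pursued this'') and works directly at the 1-truncated level. Your route is cleaner conceptually but carries more bookkeeping. Second, your argument for rigidity in $\bisk$ transports self-duality from $\spank$ via the monoidality of~$\cat R$; this is correct, but note that in the paper's ordering the monoidality of $\kk\pih\cat R$ is established in the \emph{next} lemma (\Cref{Lem:pihR}), so you are implicitly reordering or absorbing part of that verification here. The paper presumably argues rigidity in $\bisk$ directly (e.g.\ via the Hom-biset $G(-,-)$ as unit and counit), which keeps the two lemmas logically independent. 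Neither point is a gap, just a difference in packaging.
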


\begin{proof}
The tensor product of two spans $H \overset{\;b}{\gets} S \overset{a}{\to} G$ and $H' \overset{\;\;b'}{\gets} S' \overset{a'}{\to} G'$ is
\[
\xymatrix{
H \times H' & \ar[l]_-{b\times b'} S \times S' \ar[r]^-{a \times a'} & G \times G'
}
\]
and the tensor product of two bisets $U\colon H^\op\times G\to \set$ and $U'\colon H'^\op\times G'\to \set$ is 
\[
\xymatrix{
(H \times H')^\op\times (G\times G')
 \simeq 
  (H^\op \times G ) \times (H'^\op \times G')   
     \ar[r]^-{U\times U'} & \set 
}.
\]
In both cases the unit object is the trivial group~$1$. The rest is similarly straightforward. Again, consult \cite[\S\,4.3]{Huglo19pp} for details if necessary.\footnote{And again, both rigid tensor structures should be mere shadows of rigid tensor structures on the \emph{bicategories} $\Span$ and $\Biset$, in a suitable sense, but we have not pursued this.}
\end{proof}

\begin{Rem} 
\label{Rem:bisetfun-comp}
The usual definition of the category of biset functors does not mention groupoids, only groups; \cf \cite{Bouc10}. 
More precisely, \emph{loc.\,cit.\ }defines $\cat F:= \Rep \bisk(\group)$, where $\bisk(\group)\subset \bisk$ is the full subcategory whose objects are groups.
However, the latter inclusion functor is easily seen to be the \emph{additive hull} of~$\bisk(\group)$ and therefore it induces an equivalence $\Rep \bisk \overset{\sim}{\to} \Rep \bisk(\group) $ of functor categories, whence the agreement of the two definitions of biset functors (\cf \cite[Rem.\,6.5]{DellAmbrogio19pp}).
The Day convolution of biset functors and of global Mackey functors are studied, respectively, in \cite[Ch.\,8]{Bouc10} and~\cite{Nakaoka16a}.
\end{Rem}

\begin{Lem}
\label{Lem:pihR}
The pseudo-functor $\cat R\colon \Span \to \Biset$  of \Cref{Thm:main1-intro} induces a well-defined, essentially surjective, full $\kk$-linear tensor  functor $\kk \pih \cat R\colon \spank \to \bisk$.
\end{Lem}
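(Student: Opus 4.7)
The plan is to construct $\kk\pih\cat R$ in two stages (first truncate, then linearize) and then verify each of the four properties claimed. Start by applying the $1$-truncation to $\cat R\colon \Span\to\Biset$ (\Cref{Rec:pih}); this yields a functor $\pih\cat R\colon \pih\Span\to\pih\Biset$ between the semi-additive categories of \Cref{Lem:semi-add}. The one non-trivial thing to check at this stage is that $\pih\cat R$ is \emph{additive}: it must preserve the zero object (which is immediate since $\cat R$ is the identity on objects and the empty groupoid represents~$0$ on both sides) and the biproducts from \eqref{eq:dir-sums}. Biproduct preservation reduces to the identities $\cat R\circ(-)_!\simeq \cat R_!$ and $\cat R\circ(-)^*\simeq \cat R^*$, which are built into the construction of $\cat R$ via \Cref{Thm:UP-Span}: under $\pih\cat R$ the structure maps $i_!, i^*$ of a disjoint union go to $\cat R_!(i), \cat R^*(i)$, so diagram \eqref{eq:dir-sums} on the left is sent to diagram~\eqref{eq:dir-sums} on the right. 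The universal property of $\kk$-linearization (\Cref{Rec:semi-add}) then produces a $\kk$-linear additive extension $\kk\pih\cat R\colon \spank\to\bisk$, unique up to natural isomorphism.

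Essential surjectivity and fullness follow in short order. Since $\cat R$ is the identity on objects (\Cref{Thm:main1-intro}), so are $\pih\cat R$ and $\kk\pih\cat R$, which gives bijectivity (in particular essential surjectivity) on objects. For fullness, invoke the \emph{moreover} part of \Cref{Thm:main1-intro}: every biset $U\colon H\to G$ is isomorphic to $\cat R(\sigma_U)$ for a suitable span~$\sigma_U$. Consequently, the class $[U]\in \pih\Biset(H,G)$ lies in the image of $\pih\cat R$. Every element of $\bisk(H,G)=\kk\otimes_{\mathbb Z} K_0(\pih\Biset(H,G))$ is a $\kk$-linear combination of such classes, so surjectivity of $\pih\cat R$ on Hom monoids upgrades to surjectivity of $\kk\pih\cat R$ on Hom $\kk$-modules.

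For the tensor structure, recall from \Cref{Lem:rigid-for-R} that both tensor products are induced by the Cartesian product of groupoids, with unit the trivial group~$1$. The unit constraint $\kk\pih\cat R(1)=1$ is the equality of objects, and the corresponding isomorphism of units is induced by the identity $\cat R(\Id_1)=1(-,-)=\unit_{\bisk}$. For the multiplicative constraint, given parallel spans $\sigma=(H\overset{\;b}{\gets}S\overset{a}{\to}G)$ and $\sigma'=(H'\overset{\;b'}{\gets}S'\overset{a'}{\to}G')$, combine the Fubini \Cref{Lem:Fubini} with the identification $(G\times G')\bigl((g,g'),(as,a's')\bigr)\cong G(g,as)\times G'(g',a's')$ to obtain a natural isomorphism
\[
\cat R(\sigma\times\sigma')(h,h',g,g')\;\simeq\;\cat R(\sigma)(h,g)\times \cat R(\sigma')(h',g')
\]
of bisets on $(H\times H')^{\op}\times(G\times G')$. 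This descends to a well-defined isomorphism in $\bisk$ after applying $\pih$ and $\kk$-linearization, and the standard associativity and symmetry hexagons hold because both sides inherit their coherence data from the genuine Cartesian product on~$\gpd$.

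The main technical obstacle is the additivity check, i.e.\ making sure that the embeddings $(-)_!,(-)^*$ behave compatibly with $\cat R_!,\cat R^*$ before and after truncation; everything else is essentially routine once one is willing to unfold the coend formula and apply Fubini. It is also worth noting that the argument uses nothing deeper than the universal property of $\Span$ embodied in \Cref{Thm:UP-Span} and the explicit description of biproducts in $\Biset$.
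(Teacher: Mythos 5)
Your proposal is correct and follows essentially the same route as the paper: additivity of $\pih\cat R$ via the description of direct sums in \eqref{eq:dir-sums}, $\kk$-linear extension by the universal property of linearization, essential surjectivity because $\cat R$ is the identity on objects, fullness from the `moreover' part of \Cref{Thm:main1-intro}, and the monoidal structure induced by the Cartesian product of groupoids. One small point worth noting: since $\cat R$ is the identity on objects and after $1$-truncation isomorphic bisets are identified, the paper observes that the natural isomorphism $\cat R(\sigma)\otimes\cat R(\sigma')\simeq\cat R(\sigma\times\sigma')$ becomes an \emph{equality} in $\bisk$, so $\kk\pih\cat R$ is a \emph{strict} monoidal functor rather than merely a strong one; your phrasing (``descends to a well-defined isomorphism in $\bisk$'') glosses over this, but it does not affect the correctness of the argument.
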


\begin{proof}
It is immediate from \eqref{eq:dir-sums} that the induced functor $\pih \cat R\colon \pih \Span \to \pih \Biset$ is additive, \ie preserves direct sums of objects and therefore also the addition of maps (\cf \cite[Rem.\,A.6.7]{BalmerDellAmbrogio20} if necessary). 
In particular, it extends uniquely to a $\kk$-linear functor $\kk \pih \cat R$ between the $\kk$-linearizations.
This is obviously surjective on objects, and it is full by the `moreover' part of \Cref{Thm:main1-intro}.

It remains to see that $\kk \pih \cat R$ is symmetric monoidal. 
Indeed it is strictly so through the identity maps 
$\unit = 1 = \cat R(\unit)$
and
$\cat R(G) \otimes \cat R(G') = G \times G' = \cat R(G\otimes G') $,
because there are easily-guessed isomorphisms of bisets  (\cf \cite[Lem.\,4.3.11]{Huglo19pp})
\[
\cat R(a_!b^*) \otimes \cat R(a'_! b'^*) \overset{\sim}{\longrightarrow} \cat R(a_! b^* \otimes a'_!b'^*)
\]
showing that the functors $\otimes \circ (\kk \pih \cat R \times \kk \pih \cat R)$ and $\kk \pih  \cat R \circ \otimes$ are equal.
\end{proof}

\begin{proof}[{Proof of \Cref{Cor:bisetfun}}]
By Lemmas~\ref{Lem:rigid-for-R} and \ref{Lem:pihR}, the categories $\cat C:= \spank$ and $\cat D:= \bisk$ and the functor $F:= \kk \pih \cat R \colon \cat C\to \cat D$ satisfy all the hypotheses of \Cref{Thm:main2-intro}. 
This proves most of the claims of the corollary. 

It remains to show that a global Mackey functor is (isomorphic to the restriction of) a biset functor if and only if it satisfies the deflative relation,
$
\mathrm{def}^G_{G/N} \circ \mathrm{inf}^G_{G/N}= \id_{M(G/N)}
$,
whenever $N$ is a normal subgroup of a group~$G$. 
Here, for the sake of familiarity, we have used the classical notations
$\mathrm{def}^G_{G/N}= M([G = G \to G/N])$ 
and 
$\mathrm{inf}^G_{G/N}= M([G/N \gets G = G])$
for the \emph{deflation} and \emph{inflation} maps of a functor $M\in \cat M$, where $G\to G/N$ is the quotient map.

Equivalently, we must show that the kernel of the realization functor $F=\kk \pih \cat R$ is generated, as a $\kk$-linear categorical ideal of~$\spank$, by the corresponding differences of spans, \ie (after computing the obvious iso-comma square up to equivalence) by
\[
[G/N \gets G \to G/N] - [G/N = G/N = G/N]
\quad \textrm{ for all } N \unlhd G \,.
\]
While it is easy to see that these elements belong to the kernel (just compute $\cat R([G/N \gets G \to G/N])\simeq G/N(-,-)$), it is \emph{a~priori} not obvious to show that they generate it. 
This can be achieved by comparing two explicit presentations of $\spank$ and~$\bisk$, as done in the proof of \cite[Thm.\,6.9]{DellAmbrogio19pp}, to which we refer. Alternatively, one may consult the~-- possibly less transparent but ultimately equivalent~-- calculations in \cite[App.\,A]{Ganter13pp} or \cite[\S\,6]{Nakaoka16}.
\end{proof}

\begin{Rem}
Not every global Mackey functor satisfies the deflative relations, for instance the tensor unit $\unit = \spank(1,-)$ does not; see \cite[\S\,5.4]{Nakaoka16}. As deflative Mackey functors form a tensor ideal, if the unit \emph{were} deflative so would everyone.
\end{Rem}


\bibliographystyle{alpha}

\end{document}